\newtheorem{prethm}{{\bf Theorem}}[section]
\newenvironment{thm}{\begin{prethm}{\hspace{-0.5
em}{\bf.}}}{\end{prethm}}
\newtheorem{prepro}{{\bf Theorem}}
\newtheorem{precor}[prethm]{{\bf Corollary}}
\newenvironment{cor}{\begin{precor}{\hspace{-0.5
em}{\bf.}}}{\end{precor}}
\newtheorem{preconj}[prethm]{{\bf Conjecture}}
\newtheorem{preremark}[prethm]{{\bf Remark}}
\newenvironment{remark}{\begin{preremark}\em{\hspace{-0.5
em}{\bf.}}}{\end{preremark}}
\newtheorem{prelem}[prethm]{{\bf Lemma}}
\newenvironment{lem}{\begin{prelem}{\hspace{-0.5
em}{\bf.}}}{\end{prelem}}
\newtheorem{preque}[prethm]{{\bf Question}}
\newtheorem{preobserv}[prethm]{{\bf Observation}}
\newenvironment{observ}{\begin{preobserv}{\hspace{-0.5
em}{\bf.}}}{\end{preobserv}}
\newtheorem{predef}[prethm]{{\bf Definition}}
\newtheorem{preproposition}[prethm]{{\bf Proposition}}
\newtheorem{preproof}{{\bf Proof.}}
\newtheorem{preprooff}{{\bf Proof}}
\newenvironment{proof}[1]{\begin{preproof}{\rm
#1}\hfill{$\Box$}}{\end{preproof}}
\newtheorem{preproofs}{{\bf The second proof of }}
\newtheorem{preprooft}{{\bf Third proof of }}
\newtheorem{preproofF}{{\bf Proof of}}
\title{\bf\Large 
Equitable factorizations of edge-connected graphs 
}
\author{{\normalsize{\sc Morteza Hasanvand${}$} }\vspace{3mm}
\\{\footnotesize{${}$\it Department of Mathematical
 Sciences, Sharif
University of Technology, Tehran, Iran}}
{\footnotesize{}}\\{\footnotesize{  $\mathsf{hasanvand@alum.sharif.edu }$ }}}
\date{}
\begin{document}
\maketitle
\begin{abstract}{
In this paper, we show that every $(3k-3)$-edge-connected graph $G$, under a certain condition on whose degrees, can be edge-decomposed into $k$ factors $G_1,\ldots, G_k$ such that for each vertex $v\in V(G_i)$, $|d_{G_i}(v)-d_G(v)/k|< 1$, where $1\le i\le k$. As application, we deduce that every $6$-edge-connected graph $G$ can be edge-decomposed into three factors $G_1$, $G_2$, and $G_3$ such that for each vertex $v\in V(G_i)$, $|d_{G_i}(v)-d_{G}(v)/3|< 1$, unless $G$ has exactly one vertex $z$ with $d_G(z) \stackrel{3}{\not\equiv}0$. Next, we show that every odd-$(3k-2)$-edge-connected graph $G$ can be edge-decomposed into $k$ factors $G_1,\ldots, G_k$ such that for each vertex $v\in V(G_i)$, $d_{G_i}(v)$ and $d_G(v)$ have the same parity and $|d_{G_i}(v)-d_G(v)/k|< 2$, where $k$ is an odd positive integer and $1\le i\le k$. Finally, we give a sufficient edge-connectivity condition for a graph $G$ to have a parity factor $F$ with specified odd-degree vertices such that for each vertex $v$, $| d_{F}(v)-\varepsilon d_G(v)|< 2$, where $\varepsilon $ is a real number with $0< \varepsilon < 1$. 
\\
\\
\noindent {\small {\it Keywords}: Factorization; equitable edge-coloring; odd-edge-connectivity;
 modulo orientation; regular graph. }} {\small
}
\end{abstract}
%
%
%
%
%
%
%
%
%
%
\section{Introduction}
In this article, graphs may have  loops and multiple edges.
Let $G$ be a graph. The vertex set, the edge set, and the maximum degree of $G$ are denoted by 
$V(G)$, $E(G)$, and, $\Delta(G)$, respectively.
We denote by $d_G(v)$ the degree of a vertex $v$ in the graph $G$, whether $G$ is directed or not.
Also the out-degree and in-degree of $v$ in a directed graph $G$ are denoted by $d_G^+(v)$ and $d_G^-(v)$.
An orientation of $G$ is said to be 
{\bf $p$-orientation}, if for each vertex $v$, $d_G^+(v)\stackrel{k}{\equiv}p(v) $,
 where $p:V(G)\rightarrow Z_k$ is a mapping and $Z_k$ is the cyclic group of order $k$.
For any two disjoint vertex sets $A$ and $B$, 
we denote by $d_G(A,B)$ the number of edges with one end in $A$ and the other one in $B$.
Also, we denote by $d_G(A)$ the number of edges of $G$ with exactly one end in $A$.
Note that  $d_G(\{v\})$ denotes the number of non-loop edges incident with $v$,
while $d_G(v)$ denotes the degree of $v$.
We denote by $G[A]$ the induced subgraph of $G$ with the vertex set $A$ containing
precisely those edges of $G$ whose ends lie in $A$.
Let $g$ and $f$ be two integer-valued functions on $V(G)$.
A {\bf parity $(g,f)$-factor} of $G$ refers to a spanning subgraph $F$ such that for each vertex $v$, $g(v)\le d_F(v)\le f(v)$, and
also $d_F(v)$, $g(v)$, and $f(v)$ have the same parity.
An {\bf $f$-parity factor} refers to a spanning subgraph $F$ such that for each vertex $v$, 
$d_F(v)$ and $f(v)$ have the same parity.
A graph $G$ is called {\bf $m$-tree-connected}, if it contains $m$ edge-disjoint spanning trees. 
A graph $G$ is termed {\bf essentially $\lambda$-edge-connected},
 if all edges of any edge cut of size strictly less than $\lambda$ are incident to a vertex.
A graph $G$ is called {\bf odd-$\lambda$-edge-connected}, if $d_G(A)\ge \lambda$, 
for every vertex $A$ with $d_G(A)$ odd.
Note that $2\lambda$-edge-connected graphs are
odd-$(2\lambda+1)$-edge-connected.
A factorization $G_1,\ldots, G_k$ of $G$ is called 
{\bf equitable factorization}, if for each vertex $v$, $|d_{G_i}(v)-d_G(v)/k|<1$, where $1\le i\le k$.
Throughout this article, all variables $k$ are positive and integer.

In 1971 de Werra observed that bipartite graphs admit equitable factorizations.
\begin{thm}{\rm (\cite{Werra})}\label{thm:Werra}
{Every  bipartite graph  $G$ can be edge-decomposed into $k$ factors $G_1,\ldots, G_k$ such that for each $v\in V(G_i)$, 
 $$ | d_{G_i}(v)-d_G(v)/k|< 1 .$$
}\end{thm}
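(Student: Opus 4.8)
The plan is to reduce Theorem~\ref{thm:Werra} to K\"onig's edge-colouring theorem (every bipartite multigraph $H$ satisfies $\chi'(H)=\Delta(H)$) by means of a vertex-splitting device. For each vertex $v$ of $G$, partition the $d_G(v)$ edges incident with $v$ arbitrarily into $\lfloor d_G(v)/k\rfloor$ \emph{full blocks} of size exactly $k$ together with, whenever $r(v):=d_G(v)-k\lfloor d_G(v)/k\rfloor>0$, one additional \emph{short block} of size $r(v)$. Build an auxiliary multigraph $H$ whose vertices are all of these blocks (over all $v$), and whose edges are the edges of $G$: an edge $e=uv$ of $G$ becomes an edge of $H$ joining the block at $u$ that contains $e$ to the block at $v$ that contains $e$. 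Then $d_H(b)$ equals the number of edges in the block $b$, so $\Delta(H)\le k$; and since $G$ is bipartite with parts $A$ and $B$, every edge of $H$ joins a block located at an $A$-vertex to a block located at a $B$-vertex, so $H$ is bipartite.

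By K\"onig's theorem, $\chi'(H)=\Delta(H)\le k$, so the edges of $H$---equivalently the edges of $G$---admit a colouring with colours $1,\dots,k$ in which any two edges lying in a common block receive distinct colours. Let $G_i$ consist of the edges of colour $i$; these form an edge-decomposition of $G$ into $k$ factors. Fix a vertex $v$: each full block at $v$ consists of $k$ edges carrying $k$ distinct colours, hence exactly one edge of each colour, while the short block at $v$ (if it exists) consists of $r(v)$ edges carrying $r(v)$ distinct colours. Therefore $d_{G_i}(v)=\lfloor d_G(v)/k\rfloor+\varepsilon_i$ with $\varepsilon_i\in\{0,1\}$, and $\varepsilon_i=0$ for every $i$ when $k\mid d_G(v)$; in all cases $\lfloor d_G(v)/k\rfloor\le d_{G_i}(v)\le\lceil d_G(v)/k\rceil$, which yields $|d_{G_i}(v)-d_G(v)/k|<1$, as required.

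The only step carrying genuine content---and the one I expect to be the crux---is the claim that $H$ is bipartite: this is precisely where the hypothesis on $G$ is used, and it is what licenses the appeal to $\chi'(H)\le\Delta(H)$ (the analogous colouring statement fails for general graphs, for instance for an odd cycle, so some such structural input is indispensable). The block sizes are chosen exactly so that $\Delta(H)\le k$ and so that every proper $k$-edge-colouring of $H$ is forced to spread the $k$ colours as evenly as possible at each vertex of $G$; everything after the construction is routine checking. Should this reduction run into trouble, the natural fallback is induction on $k$: peel off a single factor $G_1$ with $\lfloor d_G(v)/k\rfloor\le d_{G_1}(v)\le\lceil d_G(v)/k\rceil$---which a bipartite degree-constrained subgraph (network-flow) argument supplies---and apply the inductive hypothesis to $G-E(G_1)$ with $k-1$ in place of $k$, after verifying that the leftover degrees $d_G(v)-d_{G_1}(v)$ remain in the range the induction needs.
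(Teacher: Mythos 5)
Your proof is correct, and it is essentially the argument this paper itself relies on: Theorem~\ref{thm:Werra} is only cited here (to de Werra), but the vertex-splitting into blocks of size $k$ plus one short block, followed by K\"onig's edge-colouring theorem, is exactly the technique used in the paper's proof of Theorem~\ref{thm:factorization:main:directed}. No gaps: the bipartiteness of the auxiliary graph and the bound $\Delta(H)\le k$ are verified correctly, and the degree count $\lfloor d_G(v)/k\rfloor\le d_{G_i}(v)\le\lceil d_G(v)/k\rceil$ gives the strict inequality as you state.
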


In 1994 Hilton and de Werra developed Theorem~\ref{thm:Werra} to simple graphs by replacing the following weaker version.
In 2008 Hilton~\cite{HILTON2008645} conjectured that for those simple graphs for which whose vertices with degree divisible by $k$ form an induced forest, the upper bound can be replaced by that of Theorem~\ref{thm:Werra}. 
Later, this conjecture was confirmed by Zhang and Liu (2011)~\cite{Zhang-Liu}.
\begin{thm}{\rm (\cite{Hilton-Werra})}\label{thm:Hilton-Werra}
{Every simple graph $G$ can be edge-decomposed into $k$ factors $G_1,\ldots, G_k$ such that for each $v\in V(G_i)$, 
 $$ | d_{G_i}(v)-d_G(v)/k|\le 1 .$$
}\end{thm}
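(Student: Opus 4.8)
The plan is to recast the conclusion as the vanishing of an integer potential and then to show that any factorization of positive potential admits a local improvement obtained by recolouring a single pair of colour classes.

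First I would record the reformulation: a factorization $G_1,\dots,G_k$ satisfies $|d_{G_i}(v)-d_G(v)/k|\le1$ for every $v$ and $i$ if and only if
\[
\Big\lceil \tfrac{d_G(v)}{k}\Big\rceil-1\ \le\ d_{G_i}(v)\ \le\ \Big\lfloor \tfrac{d_G(v)}{k}\Big\rfloor+1 \qquad(v\in V(G_i),\ 1\le i\le k),
\]
which follows from a one-line check on $d_G(v)\bmod k$ (the extra slack $\pm1$ is available only at vertices of degree divisible by $k$). I would then work with the potential
\[
\Phi(G_1,\dots,G_k)=\sum_{i=1}^{k}\sum_{v\in V(G)}\Big(\big(d_{G_i}(v)-\lfloor d_G(v)/k\rfloor-1\big)_{+}+\big(\lceil d_G(v)/k\rceil-1-d_{G_i}(v)\big)_{+}\Big),
\]
where $(x)_{+}=\max\{0,x\}$; the theorem asserts that some factorization has $\Phi=0$, so it is enough to decrease $\Phi$ whenever it is positive.

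Second, the improvement step. If $\Phi>0$, choose $v$ and $i$ with $d_{G_i}(v)\ge\lfloor d_G(v)/k\rfloor+2$; since $\sum_j d_{G_j}(v)=d_G(v)$ there is a colour $j$ with $d_{G_j}(v)\le\lceil d_G(v)/k\rceil-1$. I would re-colour the subgraph $H=G_i\cup G_j$ using only colours $i,j$ so that the two colour-degrees differ by at most $1$ at every vertex. Because $x\mapsto\big(x-\lfloor d_G(w)/k\rfloor-1\big)_{+}+\big(\lceil d_G(w)/k\rceil-1-x\big)_{+}$ is convex and the new degrees at $w$ are the balanced pair $\lfloor d_H(w)/2\rfloor,\lceil d_H(w)/2\rceil$ of fixed sum $d_H(w)$, this recolouring does not increase $\Phi$ at any $w$, while at $v$ the surplus is destroyed, so $\Phi$ strictly drops. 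To build such a balanced re-$2$-colouring I would iterate the classical swap: while some vertex has $i$- and $j$-degrees within $H$ differing by at least $2$, take a maximal $(i,j)$-alternating trail from it and interchange the two colours along the trail, moving the defect to the trail's far end; this suffices for any loopless $G$ unless the process stalls, which can happen only inside a component of $H$ that is Eulerian (all degrees even) with an odd number of edges.

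The main obstacle is exactly this stalling case — the smallest instance being $H$ equal to a triangle — where no $2$-colouring of $H$ avoids a vertex of defect $2$ and the alternating swaps run around a closed trail indefinitely. I would dispose of it as such obstructions are always disposed of in equitable-colouring arguments: when the $(i,j)$-alternating trail from the surplus vertex $v$ is forced to close, splice an edge of a third colour $\ell$ onto it — chosen so that $(v,\ell)$ is not itself a surplus pair — and continue the walk, i.e. run a three-colour exchange; because $G$ is loopless the objects being shuffled are single edges, so the walk can always be terminated at a vertex that absorbs the change without creating a fresh violation. (A single loop with $k\ge3$ shows that some restriction is unavoidable, since it forces only de Werra's weaker bound for graphs in general.) The bipartite case, Theorem~\ref{thm:Werra}, is easier and in fact sharper precisely because a bipartite graph has no odd cycle, hence no obstructing Eulerian-odd component, so the plain two-colour swap never stalls and already delivers the strict inequality.
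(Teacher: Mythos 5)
The paper does not prove this statement at all: Theorem~\ref{thm:Hilton-Werra} is quoted from \cite{Hilton-Werra} purely as background, so there is no in-paper proof to compare yours against; I can only assess your argument on its own terms. Your setup is sound as far as it goes: the window reformulation $\lceil d_G(v)/k\rceil-1\le d_{G_i}(v)\le\lfloor d_G(v)/k\rfloor+1$ is correct, the convexity argument showing that a balanced re-$2$-colouring of $H=G_i\cup G_j$ never increases $\Phi$ and strictly decreases it at the surplus vertex is correct, and you correctly identify the unique obstruction to balancing $H$ with two colours, namely a component that is Eulerian with an odd number of edges. This is indeed the skeleton of the Hilton--de Werra argument.

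The gap is that essentially all of the difficulty of the theorem is concentrated in that obstructed case, and you resolve it by assertion rather than by proof. When the surplus vertex $v$ lies in an odd Eulerian component of $H$, some vertex of that component must end up with an $(i,j)$-imbalance of $2$; at a vertex $w$ with $k\nmid d_G(w)$ the window admits only the two values $\lfloor d_G(w)/k\rfloor$ and $\lceil d_G(w)/k\rceil$, so such an imbalance can itself be a fresh violation, and one must prove that either some vertex of the component can absorb it or that a third colour $\ell$ provides an escape. Your description of the three-colour exchange --- ``splice an edge of a third colour onto it \ldots the walk can always be terminated at a vertex that absorbs the change without creating a fresh violation'' --- is precisely the statement that needs proving, and nothing in the proposal establishes termination of this walk or the existence of an absorbing endpoint; this is where Hilton and de Werra's actual chain analysis does several pages of work. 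Relatedly, your claim that looplessness of $G$ suffices is unsupported: the theorem is stated for simple graphs, the known arguments use the absence of multiple edges (not merely of loops) in controlling the alternating chains, and for general loopless multigraphs only the weaker conclusion $|d_{G_i}(v)-d_{G_j}(v)|\le 2$ (equivalently $|d_{G_i}(v)-d_G(v)/k|<2$) is available. As it stands the proposal is an accurate outline of the standard strategy, but not a proof.
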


Recently, Thomassen (2019) 
made another attempt for developing Theorem~\ref{thm:Werra} to highly edge-connected regular graphs 
and concluded the following result.
An intresting consequence of it says that every $6$-edge-connected $3r$-regular graph can be edge-decomposed into three $r$-regular factors.
\begin{thm}{\rm (\cite{Thomassen})}\label{thm:Thomassen:factorization}
{Let $k$ and $r$ be two odd positive integers. If $G$ 
is an odd-$(3k-2)$-edge-connected $kr$-regular graph, then it can be edge-decomposed into $r$-regular factors.
}\end{thm}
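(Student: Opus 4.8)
The plan is to derive the theorem from a \emph{modulo orientation} result together with de Werra's theorem (Theorem~\ref{thm:Werra}), using an orientation as the device that turns the (non-bipartite) graph $G$ into a bipartite one with fully controlled degrees. First I would record a parity observation. Since $G$ is $kr$-regular with $kr$ odd, the identity $\sum_{v}d_G(v)=2|E(G)|$ forces $n:=|V(G)|$ to be even, so $|E(G)|=\tfrac12 nkr = k\cdot\tfrac{nr}{2}$ is a multiple of $k$. Consequently the constant map $p\equiv 0$ is a \emph{feasible} residue prescription, i.e. $\sum_{v}p(v)\equiv |E(G)|\pmod k$.

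The key step is to produce an orientation $D$ of $G$ with $d^{+}_{D}(v)\equiv 0\pmod k$ for every vertex $v$; since $d^{+}_{D}(v)+d^{-}_{D}(v)=kr\equiv 0\pmod k$, this automatically gives $d^{-}_{D}(v)\equiv 0\pmod k$ as well. Such a $D$ is precisely a $p$-orientation with $p\equiv 0$, and this is where the hypotheses really enter: for odd $k$, every odd-$(3k-2)$-edge-connected graph admits a $p$-orientation for every feasible $p\colon V(G)\to Z_k$. I regard establishing this modulo-orientation statement as the main obstacle of the whole argument; its proof rests on Thomassen's iterative orientation/contraction technique for modulo flows, the ``odd'' refinement (connectivity measured only on odd edge cuts) coming from parity bookkeeping on the sizes of the cuts encountered during the contraction process.

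Given such a $D$, I would reduce to a bipartite problem. Build a bipartite multigraph $H$ by splitting each $v\in V(G)$ into two vertices $v^{+}$ and $v^{-}$, and replacing every arc of $D$ directed from $u$ to $w$ by an edge $u^{+}w^{-}$ of $H$ (loops of $G$, oriented arbitrarily, simply become edges $v^{+}v^{-}$ and cause no trouble). Then $d_H(v^{+})=d^{+}_{D}(v)$ and $d_H(v^{-})=d^{-}_{D}(v)$, so every degree of $H$ is divisible by $k$. Applying Theorem~\ref{thm:Werra} to $H$ gives a decomposition $H=H_1\cup\cdots\cup H_k$ with $|d_{H_i}(x)-d_H(x)/k|<1$ for all $x$ and all $i$; since each $d_H(x)/k$ is an integer, this forces $d_{H_i}(x)=d_H(x)/k$ exactly.

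Finally I would pull the decomposition back to $G$. Through the edge-arc bijection, $E(H)=E(H_1)\cup\cdots\cup E(H_k)$ becomes a partition $A(D)=A(D_1)\cup\cdots\cup A(D_k)$; let $G_i$ be the underlying undirected graph of $D_i$. Then for every vertex $v$,
\[
d_{G_i}(v)=d^{+}_{D_i}(v)+d^{-}_{D_i}(v)=d_{H_i}(v^{+})+d_{H_i}(v^{-})=\frac{d^{+}_{D}(v)}{k}+\frac{d^{-}_{D}(v)}{k}=\frac{d_G(v)}{k}=r,
\]
so each $G_i$ is an $r$-regular factor and $G_1,\dots,G_k$ is the required edge-decomposition. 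The only non-elementary ingredient is the modulo-orientation theorem of the second paragraph; everything else is the bookkeeping above. (As a sanity check, this also explains the hypothesis: the same scheme with a $p$-orientation for a general feasible $p$, followed by de Werra's equitable bound, would yield the near-regular parity factorizations mentioned in the abstract, of which the present statement is the ``exactly $r$'' specialization.)
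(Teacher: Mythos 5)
Your argument is essentially the paper's own route to this statement (and essentially Thomassen's): obtain an orientation of $G$ in which every out-degree is divisible by $k$, split each vertex into $v^{+}$ and $v^{-}$ to form a bipartite graph with all degrees divisible by $k$, apply de Werra/K\"onig to get an exactly equitable decomposition, and pull back; this is precisely Lemma~\ref{lem:balanced-modulok:odd} combined with Theorem~\ref{thm:factorization:main:directed}, as packaged in Theorems~\ref{thm:parity-factorizations:odd-k} and~\ref{thm:divisiblebyk}. One correction: the orientation statement you invoke is overstated --- odd-$(3k-2)$-edge-connectivity does \emph{not} yield a $p$-orientation for every feasible $p$ (a graph with an edge cut of size $2$ can be odd-$(3k-2)$-edge-connected, yet across such a cut only the residues $0,1,2$ are realizable as the number of outgoing edges, so most prescriptions $p$ fail); what is actually available is the balanced modulo orientation $d_G^{+}(v)\stackrel{k}{\equiv}d_G^{-}(v)$ of Lemma~\ref{lem:balanced-modulok:odd}, which for a $kr$-regular graph with $k$ odd coincides with the $p\equiv 0$ instance you use, so your proof survives once the citation is corrected.
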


In this paper, we generalize Thomassen's result in several ways
 together with developing Theorem~\ref{thm:Werra} to highly edge-connected graphs as the next theorem. 
In addition, we provide a common version for Theorem~\ref{thm:Thomassen:factorization} and whose even-regular  version
by replacing another concept of edge-connectivity.
\begin{thm}\label{Intro:thm:Z}
{Let $G$ be a $(3k-3)$-edge-connected graph, where $k$ is a positive integer. 
If there is a vertex set $Z$ with $|E(G)|\stackrel{k}{\equiv}\sum_{v\in Z}d_G(v)$,
 then $G$ can be edge-decomposed into $k$ factors $G_1,\ldots, G_k$ such that for each $v\in V(G_i)$, 
 $$|d_{G_i}(v)-d_G(v)/k|< 1 .$$
}\end{thm}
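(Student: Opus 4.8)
The plan is to build a suitable modulo orientation of $G$ and then convert it into the required factorization by applying de Werra's bipartite theorem (Theorem~\ref{thm:Werra}) to an auxiliary bipartite graph. For each $v\in V(G)$ write $d_G(v)=kq_v+r_v$ with $0\le r_v<k$. It suffices to construct an edge-decomposition $G_1,\ldots,G_k$ in which, for every vertex $v$, exactly $r_v$ of the factors satisfy $d_{G_i}(v)=q_v+1$ and the remaining $k-r_v$ satisfy $d_{G_i}(v)=q_v$: such a decomposition meets the required bound, since $|q_v-d_G(v)/k|=r_v/k<1$, while $|(q_v+1)-d_G(v)/k|=1-r_v/k<1$ whenever $r_v\ge 1$, and a factor attains the value $q_v+1$ only when $r_v\ge 1$.

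The first step is to choose out-degree residues, and this is where the set $Z$ enters. Define $p:V(G)\to Z_k$ by $p(v)=r_v$ for $v\in Z$ and $p(v)=0$ for $v\notin Z$; then
\[
\sum_{v\in V(G)}p(v)=\sum_{v\in Z}r_v\equiv\sum_{v\in Z}d_G(v)\equiv|E(G)|\pmod{k},
\]
which is exactly the compatibility condition required for $G$ to admit a $p$-orientation. I would then invoke the modulo orientation theorem for $(3k-3)$-edge-connected graphs to fix such a $p$-orientation $D$. Thus $d_D^+(v)\equiv 0\pmod k$ for every $v\notin Z$, and, since $d_D^+(v)+d_D^-(v)=d_G(v)$, also $d_D^-(v)\equiv 0\pmod k$ for every $v\in Z$.

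The second step converts $D$ into the factorization. Let $H$ be the bipartite graph with parts $\{v^+:v\in V(G)\}$ and $\{v^-:v\in V(G)\}$ in which each arc of $D$ directed from $u$ to $v$ is replaced by the edge $u^+v^-$ (loops of $G$ are harmless, since $v^+\ne v^-$); then $d_H(v^+)=d_D^+(v)$ and $d_H(v^-)=d_D^-(v)$. By Theorem~\ref{thm:Werra}, $H$ decomposes into $k$ factors $H_1,\ldots,H_k$ with $|d_{H_i}(x)-d_H(x)/k|<1$ for each $x\in V(H_i)$; exactly as in the first paragraph (now applied to $H$), this forces, for each $x$, that exactly $(d_H(x)\bmod k)$ of the values $d_{H_i}(x)$ equal $\lceil d_H(x)/k\rceil$ and the rest equal $\lfloor d_H(x)/k\rfloor$. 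Pulling the classes $H_i$ back to $G$ yields an edge-decomposition $G_1,\ldots,G_k$ with $d_{G_i}(v)=d_{H_i}(v^+)+d_{H_i}(v^-)$. If $v\notin Z$ then $d_H(v^+)=d_D^+(v)$ is a multiple of $k$, so $d_{H_i}(v^+)=d_D^+(v)/k$ for all $i$; and since $d_D^-(v)\equiv r_v\pmod k$, exactly $r_v$ of the $d_{H_i}(v^-)$ equal $\lfloor d_D^-(v)/k\rfloor+1$ and the rest equal $\lfloor d_D^-(v)/k\rfloor$, where $d_D^+(v)/k+\lfloor d_D^-(v)/k\rfloor=q_v$. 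Hence $d_{G_i}(v)\in\{q_v,q_v+1\}$ with exactly $r_v$ indices attaining $q_v+1$, which is the prescription required at $v$; the case $v\in Z$ is completely symmetric with $v^+$ and $v^-$ interchanged, so the construction is complete.

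The one substantial ingredient is the modulo orientation theorem used above: every $(3k-3)$-edge-connected graph admits a $p$-orientation for every $p:V(G)\to Z_k$ with $\sum_v p(v)\equiv|E(G)|\pmod{k}$. For odd $k$ this is the edge-connectivity bound of Lov\'asz, Thomassen, Wu and Zhang (note $3k-3=6\cdot\frac{k-1}{2}$), and I expect the main work to lie in establishing — or precisely quoting — this statement for general $k$; this is the step where the hypothesis $(3k-3)$-edge-connected is genuinely consumed, and where the real difficulty lies. The role of the set $Z$, by contrast, is light but indispensable: it is precisely what makes it possible to choose $p$ with $p(v)\in\{0,r_v\}$ at every vertex while still meeting the summation condition, and this special shape of $p$ — forcing one of $d_D^+(v)$, $d_D^-(v)$ to be divisible by $k$ at each vertex — is exactly what lets the de Werra step be applied verbatim, with no further coordination needed between the two sides $v^+$ and $v^-$ of a vertex; for a general $p$ one would instead have to control the two rounding patterns jointly.
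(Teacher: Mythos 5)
Your proposal is correct and follows essentially the same route as the paper: it uses $Z$ to define a residue function $p$ with $\sum_v p(v)\equiv|E(G)|\pmod k$, invokes the quoted modulo-orientation theorem for $(3k-3)$-edge-connected graphs to make one of $d^+(v),d^-(v)$ divisible by $k$ at every vertex, and then equitably decomposes the associated bipartite split graph. The only cosmetic difference is that you apply de Werra's Theorem~\ref{thm:Werra} directly to the bipartite graph $H$, whereas the paper packages this step as Theorem~\ref{thm:factorization:main:directed} (proved via K\H{o}nig's edge-colouring theorem after a further vertex splitting); the two are interchangeable here.
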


In 1982 Hilton constructed the following parity version of Theorem~\ref{thm:Werra} 
on the existence of even factorizations with bounded degrees of even graphs.

\begin{thm}{\rm (\cite{Hilton1982})}\label{thm:Hilton}
{Every  graph $G$ with even degrees can be edge-decomposed into $k$ even factors $G_1,\ldots, G_k$ such that for each vertex $v$, 
 $|d_{G_i}(v)-d_{G}(v)/k|<2.$
}\end{thm}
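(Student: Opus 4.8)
The plan is to prove, by induction on $k$, the following more explicit version of the statement: every graph $G$ with even degrees can be edge-decomposed into even factors $G_1,\dots,G_k$ with $d_{G_i}(v)\in\{2\lfloor d_G(v)/(2k)\rfloor,2\lceil d_G(v)/(2k)\rceil\}$ for every vertex $v$ and every index $i$. Both of these integers are strictly within distance $2$ of $d_G(v)/k$ (and both equal $d_G(v)/k$ when $2k\mid d_G(v)$), so this yields Theorem~\ref{thm:Hilton}. The case $k=1$ is trivial, and for the inductive step it suffices to peel off a single even factor of the right approximate size.

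Concretely, I would first prove a \emph{Key Lemma}: every graph $G$ with even degrees has an even subgraph $H$ with $2\lfloor d_G(v)/(2k)\rfloor\le d_H(v)\le 2\lceil d_G(v)/(2k)\rceil$ for all $v$. Granting it, set $G_1:=H$ and apply the induction hypothesis with $k-1$ colours to the (again even) graph $G-H$ to obtain $G_2,\dots,G_k$; writing $d_G(v)=2km_0+2\rho$ with $0\le\rho<k$ and testing the (at most two) possible values of $d_H(v)$, one verifies case by case that the resulting degrees of $G_2,\dots,G_k$ at $v$ again land in $\{2\lfloor d_G(v)/(2k)\rfloor,2\lceil d_G(v)/(2k)\rceil\}$ — a short arithmetic computation — so the strengthened statement propagates.

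To prove the Key Lemma I would invoke de Werra's theorem (Theorem~\ref{thm:Werra}). Since $G$ has even degrees it has an orientation $D$ with $d^+_D(v)=d^-_D(v)=d_G(v)/2=:m(v)$ for all $v$. Form the bipartite graph $B$ with classes $\{v^+:v\in V(G)\}$ and $\{v^-:v\in V(G)\}$ and one edge $u^+v^-$ for each arc $(u,v)$ of $D$, so that $d_B(v^+)=d_B(v^-)=m(v)$. Applying Theorem~\ref{thm:Werra} to $B$ gives an equitable $k$-edge-colouring; let $D_1$ be the arcs of $D$ coloured $1$ and $H_0\subseteq G$ its underlying subgraph. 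Then $d_{H_0}(v)=d^+_{D_1}(v)+d^-_{D_1}(v)$ with both summands in $\{\lfloor m(v)/k\rfloor,\lceil m(v)/k\rceil\}$, so $d_{H_0}(v)$ already lies in the desired window unless it equals the odd middle value $2\lfloor m(v)/k\rfloor+1$, which happens exactly when $D_1$ is unbalanced at $v$ (that is, $d^+_{D_1}(v)\ne d^-_{D_1}(v)$).

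Repairing the parity at those vertices is the step I expect to be the main obstacle. Let $U$ be the set of odd-degree vertices of $H_0$; since $\sum_v d_{H_0}(v)$ is even on each component, $U$ meets each component of $G$ evenly, so a $T$-join $S$ with $T=U$ exists and $H:=H_0\triangle S$ is even. The real difficulty is to choose $S$ so that no vertex degree escapes the admissible set $\{2\lfloor m(v)/k\rfloor,2\lceil m(v)/k\rceil\}$, whose width is only $2$; in terms of $D$ this amounts to rebalancing $D_1$ along directed paths from its excess vertices to its deficit vertices while keeping every out- and in-degree in $\{\lfloor m(v)/k\rfloor,\lceil m(v)/k\rceil\}$, which I would carry out by an augmenting-path/flow argument (a minimum such $S$ is a vertex-disjoint union of short paths, along which insertions and deletions can be made to cancel at the internal vertices). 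A cleaner alternative is to deduce the Key Lemma from Lovász's parity-$(g,f)$-factor theorem with $g(v)=2\lfloor d_G(v)/(2k)\rfloor$ and $f(v)=2\lceil d_G(v)/(2k)\rceil$, reducing everything to checking that the corresponding deficiency is nonnegative on all disjoint pairs $S,T$ — which follows from the near-$1/k$ form of $g,f$, the bound $e_G(S,T)\le\min\{\sum_{v\in S}d_G(v),\sum_{v\in T}d_G(v)\}$, and the evenness of $G$.
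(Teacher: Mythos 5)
Your proposal is correct and, via its second route to the Key Lemma, essentially reproduces the paper's argument: the paper likewise proceeds by induction on $k$, peeling off one even factor $G_k$ with $|d_{G_k}(v)-d_G(v)/k|<2$ whose existence (Corollary~\ref{cor:Eulerian} with $\varepsilon=1/k$, which is exactly your Key Lemma since an even integer within distance $2$ of $d_G(v)/k$ must be $2\lfloor d_G(v)/(2k)\rfloor$ or $2\lceil d_G(v)/(2k)\rceil$) is obtained from Lov\'asz's parity $(g,f)$-factor theorem with precisely the functions $g,f$ you name, the deficiency check being easy because evenness of all degrees forces every component counted by $\omega_f$ to send at least one edge to each of $A$ and $B$. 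Your strengthened induction hypothesis does make the error-propagation arithmetic more transparent than the paper's parity juggling with $(d_G(v)+1)/k$; just note that your first route to the Key Lemma (de Werra on the bipartite split plus a $T$-join repair) is genuinely incomplete at exactly the rebalancing step you flag, so the Lov\'asz route is the one to write up.
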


In this paper, we provide two extensions of Hilton's result in highly odd-edge-connected graphs and highly edge-connected even graphs by proving the following results. 
An interesting consequence of them says that every $(3k-3)$-edge-connected graph of even order whose degrees lie in the set $\{3k, 3k+2,...,5k\}$ can be edge-decomposed into $k$ factors whose degrees lie in the set $\{3,5\}$.
Note that the next theorem can also be considered as an extension of Theorem~\ref{thm:Thomassen:factorization}.
\begin{thm}\label{thm:parity-factorizations:odd-k}
{Let $G$ be a graph and let $k$ be an odd positive integer.
If $G$ is odd-$(3k-2)$-edge-connected, then it can be edge-decomposed into $k$ factors $G_1,\ldots, G_k$ such that for each $v\in V(G_i)$, 
 $ d_{G_i}(v)\stackrel{2}{\equiv} d_G(v)$
and
 $$|d_{G_i}(v)-d_{G}(v)/k|<2.$$
}\end{thm}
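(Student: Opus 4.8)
The plan is to reduce the statement to a statement about modulo orientations, exactly in the spirit of Thomassen's proof of Theorem~\ref{thm:Thomassen:factorization}, but carried out in a way that controls degrees rather than forcing exact regularity. First I would replace $G$ by a slightly modified graph $H$ in which every degree is made divisible by $k$: since $k$ is odd, for each vertex $v$ write $d_G(v)=kq_v+r_v$ with $0\le r_v<k$, and the target is a factorization in which each $d_{G_i}(v)$ equals $q_v$ or $q_v+2$ (this is the only way to have the correct parity while staying within distance $2$ of $d_G(v)/k$, once one notes $r_v$ and $k$ parities force the choice). The natural device is to add a small auxiliary structure — a bounded number of parallel edges or a gadget vertex — absorbing the residues $r_v$, so that in the enlarged graph every degree is a multiple of $k$ and the enlarged graph remains odd-$(3k-2)$-edge-connected; then an equitable factorization of the enlarged graph into $k$ factors, each contributing degree exactly $d/k$ at each vertex, restricts back to the desired factorization of $G$ after the gadget edges are distributed among the $G_i$ one at a time.

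The core step is then: every odd-$(3k-2)$-edge-connected graph $H$ all of whose degrees are divisible by $k$ decomposes into $k$ factors each of which is, at every vertex, within $1$ of $d_H(v)/k$ and of the right parity — equivalently (since $d_H(v)/k$ has a fixed parity) each factor has degree exactly $d_H(v)/k$ at each vertex where that is forced, or the factor degrees at a vertex take only the two consecutive allowed even/odd values summing correctly. I would obtain this by induction on $k$ together with a modulo-$k$ orientation argument: by the known theorems on $\mathbb{Z}_k$-connectivity / $p$-orientations of highly edge-connected graphs (the odd-$(3k-2)$ hypothesis is precisely what is needed to find an orientation with prescribed out-degrees modulo $k$ after the degree-divisibility reduction), one finds an orientation of $H$ with $d_H^+(v)\equiv d_H(v)/k \pmod{?}$ chosen so that the ``excess'' subgraph splits off one factor $G_k$ with $d_{G_k}(v)$ congruent to $d_H(v)/k$ mod $2$ and within the bound, leaving $H-E(G_k)$ with all degrees divisible by $k-1$ — but here one must be careful, since $k-1$ is even, so the recursion should really run over odd $k$ only and strip off factors two at a time, or else invoke Hilton's even-factorization result (Theorem~\ref{thm:Hilton}) as the even-$k$ base case after the degrees have been made even.

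Concretely I would structure the induction as follows: peel off a single ``near-$1/k$'' factor using a modulo orientation so that the remainder $H'=H-E(G_k)$ has all degrees divisible by $k-1$; observe $H'$ has even degrees and is highly odd-edge-connected (the edge-connectivity drop is controlled because $d_{G_k}$ is bounded), hence by Hilton's theorem $H'$ decomposes into $k-1$ even factors each within $2$ of $d_{H'}(v)/(k-1)$; finally check that reassembling these with $G_k$ gives $k$ factors of $G$ meeting the stated parity and distance bounds — this last arithmetic check is where the parity of $k$ and the careful choice of the orientation's target function $p$ pay off. The main obstacle, and the step deserving the most care, is the orientation step: I need a $p$-orientation of $H$ for a cleverly chosen $p$ such that the subgraph of ``edges oriented into an overloaded configuration'' is simultaneously (i) spanning with the right degree parity at every vertex and (ii) within distance $1$ of $d_H(v)/k$ everywhere, and the existence of such an orientation under merely odd-$(3k-2)$-edge-connectivity (rather than full $(3k-2)$-edge-connectivity) is exactly the delicate point that makes the theorem stronger than a routine corollary of Theorem~\ref{Intro:thm:Z}; verifying that the odd-edge-connectivity is preserved (in the odd-$(3(k-1)-2)$ sense) after removing $G_k$ so that the induction actually closes is the crux of the argument.
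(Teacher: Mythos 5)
Your overall plan --- reduce to a modulo-$k$ orientation and then peel off factors by induction --- does not close, and the gap is concrete. In your induction step you remove one factor $G_k$ and hand the remainder $H'$ (all degrees divisible by $k-1$, hence even) to Hilton's theorem, which produces $k-1$ \emph{even} factors. But the conclusion requires $d_{G_i}(v)\stackrel{2}{\equiv}d_G(v)$ for \emph{every} factor, and $G$ may well have odd-degree vertices; even after your divisibility reduction, $d_H(v)=kq_v$ has the parity of $q_v$ (as $k$ is odd), which can be odd. At any such vertex all $k$ factors must have odd degree there, so $k-1$ of them cannot come from an even factorization: the hard case of the theorem (odd local degrees, the analogue of Thomassen's odd-$r$-regular factors) is exactly the case your recursion discards. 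Two further steps you flag yourself are also genuinely unresolved: the gadget that makes all degrees divisible by $k$ must preserve odd-$(3k-2)$-edge-connectivity and must restrict back without spoiling the parity and the distance bound simultaneously, and odd-edge-connectivity is \emph{not} in general preserved when a factor is deleted, so the inductive hypothesis cannot be invoked on $H'$ in any case.

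The paper avoids all of this with a single non-inductive argument that your proposal is missing. Lemma~\ref{lem:balanced-modulok:odd} gives, directly from odd-$(3k-2)$-edge-connectivity and with no degree-divisibility reduction, an orientation of $G$ with $d_G^+(v)\stackrel{k}{\equiv}d_G^-(v)$ at every vertex. Then Theorem~\ref{thm:factorization:main:directed}(ii) --- the split-and-K\"onig proper edge-coloring of the associated bipartite graph, augmented with artificial edges pairing the two leftover sub-vertices coming from $v^+$ and $v^-$ --- produces in one shot $k$ factors with $|d^{+}_{G_i}(v)-d^{+}_G(v)/k|<1$, $|d^{-}_{G_i}(v)-d^{-}_G(v)/k|<1$, and, crucially, $d_{G_i}(v)\stackrel{2}{\equiv}(d_G^+(v)-d_G^-(v))/k$ for all $i$. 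The triangle inequality gives $|d_{G_i}(v)-d_G(v)/k|<2$, and since all $k$ numbers $d_{G_i}(v)$ share one parity and sum to $d_G(v)$ with $k$ odd, each has the parity of $d_G(v)$. That parity-forcing refinement of the equitable edge-coloring is the idea your proposal lacks, and it is what makes any induction (and any preservation of connectivity under factor removal) unnecessary.
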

\begin{thm}\label{thm:parity-factorizations:even-k}
{Let $G$ be a graph with even degrees, let $f:V(G)\rightarrow Z_2$ be a mapping with
$ \sum_{v\in V(G)}f(v)\stackrel{2}{\equiv}0$, and let $k$ be an even positive integer.
If for every vertex set $X$ 
with $ \sum_{v\in X}f(v)$ odd,
$d_G(X)\ge 3k-2$, then $G$ can be edge-decomposed into $f$-parity factors $G_1,\ldots, G_k$ such that for each $v\in V(G_i)$, 
 $$|d_{G_i}(v)-d_{G}(v)/k|<2.$$
}\end{thm}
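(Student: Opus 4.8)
The plan is an induction on the even integer $k$, decreasing it by $2$ at a time with $k=2$ as the base, the engine being the extraction of one balanced even factor that further splits into two $f$-parity factors while leaving a graph that still satisfies the hypothesis. First, the routine reductions. If some component $C$ of $G$ had $\sum_{v\in C}f(v)$ odd, then $C$ would be an $f$-odd vertex set with $d_G(C)=0<3k-2$; hence every component has even $f$-sum and we may assume $G$ connected. For each vertex $v$ let $S(v)$ be the set of integers congruent to $f(v)$ modulo $2$ in the open interval $(d_G(v)/k-2,\,d_G(v)/k+2)$; this is $\{a(v)\}$ or $\{a(v),a(v)+2\}$, and an elementary count forces, in every admissible decomposition, exactly $n(v):=(d_G(v)-k\,a(v))/2\in\{0,\dots,k\}$ of the numbers $d_{G_i}(v)$ to equal $a(v)+2$. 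Since every degree of $G$ is even, $d_G(X)$ is even for every vertex set $X$, so an $f$-odd set $X$ is ``tight'' precisely when $d_G(X)=3k-2$.

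For the inductive step I would extract a spanning subgraph $H$ of $G$ with all degrees even, with $d_H(v)\in\{2a(v),2a(v)+2,2a(v)+4\}$ chosen compatibly with $n(v)$, such that $(i)$ every component of $H$ has even $f$-sum and $(ii)$ $4\le d_H(X)\le d_G(X)-(3k-8)$ for every $f$-odd set $X$ (so $d_H(X)\in\{4,6\}$ across a tight cut). Granting such an $H$: the case $k=2$ applied to $H$ --- which, as the first paragraph shows, just asks for one $f$-parity factor $F$ of $H$ with $|d_F(v)-d_H(v)/2|\le 1$, the complement being the other half --- splits $H$ into two $f$-parity factors $G_{k-1},G_k$ with $d_{G_i}(v)\in S(v)$; and $G':=G\setminus E(H)$ again has even degrees, still has $\sum_v f(v)\equiv 0$, and for every $f$-odd $X$ satisfies $d_{G'}(X)=d_G(X)-d_H(X)\ge 3k-8=3(k-2)-2$, so by induction $G'$ splits into $k-2$ balanced $f$-parity factors; these together with $G_{k-1},G_k$ are the desired $k$ factors, the counts $n(v)$ having been propagated correctly. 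To construct $H$, I would begin with a parity-$(g,h)$-factor of $G$ of Lov\'asz type, with the even bounds $g(v)\le h(v)$ near $2d_G(v)/k$ and consistent with $n(v)$, whose deficiency condition is elementary to verify from $d_G(X)\ge 3k-2$; this meets the degree constraints on $H$. The two cut conditions $(i)$ and $(ii)$ are then arranged by a separate argument: uncross the family of minimum and near-minimum $f$-odd cuts of $G$ --- after uncrossing it is cross-free, hence essentially laminar --- and, aided by an edge-connectivity-preserving splitting-off step performed at $f$-even vertices, steer the choice of the parity factor so that it crosses each of the finitely many ``critical'' cuts a bounded number of times, simultaneously destroying any $f$-odd component of $H$.

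The main obstacle is exactly the construction of $H$ subject to $(ii)$: a balanced even factor can a priori cross a small cut $X$ in up to $d_G(X)$ edges when $G[X]$ is dense, so the extraction cannot be greedy but must be driven by the global structure of the tight and near-tight $f$-odd cuts; proving that the parity-$(g,h)$-factor theorem can be applied in the uncrossed, partially split graph so as to meet all the bounded-crossing bounds at once --- while still hitting the prescribed degrees and keeping every component of $H$ with even $f$-sum --- is where essentially all the work goes, and it is there that the constant $3$ in $3k-2$ is spent: each pair of factors peeled off consumes up to $6$ edges of the boundary of a tight $f$-odd set (about $2d_G(v)/k$ at a single vertex), while the base case $k=2$ needs the remaining $4$. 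The base case itself reduces, as noted, to a single balanced parity-factor existence statement and is proved from the hypothesis by the same parity-factor machinery; the deficiency computations, and the bookkeeping that keeps $n(v)\in\{0,\dots,k-2\}$ after each step, are routine once this structure is in place.
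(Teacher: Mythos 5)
Your overall strategy---peel off an even factor $H$ with degrees near $2d_G(v)/k$, split it into two $f$-parity factors, and recurse on $G\setminus E(H)$---is not the paper's route, and as written it has a genuine gap at exactly the point you flag yourself: the construction of $H$ subject to your condition $(ii)$, namely $4\le d_H(X)\le d_G(X)-(3k-8)$ for \emph{every} $f$-odd set $X$. The Lov\'asz parity-factor machinery controls only vertex degrees; it gives no handle on how many edges of the factor cross a given cut, and a balanced even factor really can absorb an entire near-tight cut when $G[X]$ is dense, as you note. The family of $f$-odd sets with $d_G(X)$ near $3k-2$ need not become manageable after uncrossing, and the assertion that a single parity factor can be steered to cross each member of such a family between $4$ and $6$ times \emph{while simultaneously} hitting prescribed degrees at every vertex is itself a global statement at least as strong as the theorem being proved; ``a separate argument'' invoking laminarity and splitting-off at $f$-even vertices does not constitute that argument. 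A secondary, unaddressed issue is that the plain inductive statement is too weak to recurse on: the degrees of $G'=G\setminus E(H)$ satisfy only $|d_{G'}(v)/(k-2)-d_G(v)/k|\le 4/(k-2)$ or so, and adding the inductive error $<2$ overshoots the target window; the induction closes only if you strengthen the hypothesis to prescribe, at each vertex, exactly how many of the $k$ factors receive $a(v)$ and how many receive $a(v)+2$. You allude to propagating $n(v)$ but never state or verify this stronger induction hypothesis.

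For comparison, the paper avoids all of this by quoting Lemma~\ref{lem:balanced-modulok:even} (an orientation theorem from~\cite{ModuloBounded}): the hypothesis $d_G(X)\ge 3k-2$ on $f$-odd cuts yields an orientation of $G$ with $d_G^+(v)=d_G^-(v)$ when $f(v)$ is even and $|d_G^+(v)-d_G^-(v)|=k$ when $f(v)$ is odd. Feeding this orientation into Theorem~\ref{thm:factorization:main:directed}(ii) --- a bipartite proper edge-colouring argument --- produces factors with $|d^{+}_{G_i}(v)-d^{+}_G(v)/k|<1$, $|d^{-}_{G_i}(v)-d^{-}_G(v)/k|<1$, and $d_{G_i}(v)\stackrel{2}{\equiv}(d_G^+(v)-d_G^-(v))/k\stackrel{2}{\equiv}f(v)$, whence $|d_{G_i}(v)-d_G(v)/k|<2$. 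All the global cut information is consumed once, by the orientation lemma; no factor ever needs to be steered across cuts, and no induction on $k$ is required.
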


In the remainder of this paper, we turn our attention to the existence of a parity factor 
whose degrees are close to $\varepsilon$ of the corresponding degrees in the main graph, and
we come up with the following results on even graphs.
\begin{thm}\label{thm:intro:even-even}
{Let $\varepsilon $ be a real number with $0\le  \varepsilon\le 1$. If $G$ is a  graph with even degrees,
then it admits an even factor $F$ such that for each vertex $v$, $$ | d_{F}(v)-\varepsilon d_G(v)| < 2.$$
}\end{thm}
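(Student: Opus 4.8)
The plan is to prove Theorem~\ref{thm:intro:even-even} by an orientation-plus-greedy argument together with the classical fact that an even graph decomposes into edge-disjoint cycles. The target is a spanning even subgraph $F$ with $d_F(v)$ as close as possible to $\varepsilon d_G(v)$; since $d_F(v)$ must be even, the best one can hope for at a vertex $v$ is that $d_F(v)$ is one of the two even integers straddling $\varepsilon d_G(v)$, and the strict bound $<2$ is exactly what that gives (note $\varepsilon d_G(v)$ may be as large as $d_G(v)$ when $\varepsilon=1$, in which case $F=G$ works, and the degenerate cases $\varepsilon\in\{0,1\}$ are immediate, so the content is $0<\varepsilon<1$). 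First I would reduce to connected $G$ by treating components separately, and recall that $G$ being even means $E(G)$ partitions into cycles $C_1,\dots,C_m$; equivalently $G$ carries an Eulerian orientation in each component, so that $d_G^+(v)=d_G^-(v)=d_G(v)/2$ for every $v$.

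**The core step** is to choose the subgraph $F$ as a union of some of the directed structure so that in-degrees are controlled. Concretely, I would look for a function assigning to each edge a value in $\{0,1\}$ (membership in $F$) such that for every $v$ the number of chosen edges at $v$ is even and lies within $2$ of $\varepsilon d_G(v)$. The natural way to enforce the parity constraint globally is to build $F$ as an edge-disjoint union of cycles: start from the cycle decomposition $C_1,\dots,C_m$, and for each cycle decide to keep it or not; keeping any set of cycles automatically yields an even subgraph. So the problem becomes: select a subfamily $\mathcal{S}\subseteq\{C_1,\dots,C_m\}$ so that for each $v$, $\bigl|\,2\cdot(\text{number of cycles in }\mathcal S\text{ through }v) + (\text{correction}) - \varepsilon d_G(v)\bigr|<2$. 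Since each cycle through $v$ contributes $2$ to $d_F(v)$, and a vertex of degree $d_G(v)$ lies on $d_G(v)/2$ cycles counted with multiplicity, I want roughly an $\varepsilon$-fraction of those cycle-passages at every vertex simultaneously — which is precisely the kind of statement that follows from applying de Werra's Theorem~\ref{thm:Werra} to an auxiliary bipartite graph.

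**Here is the cleaner route I would actually take.** Form the bipartite incidence-type graph $H$ whose two sides are $V(G)$ and $\{C_1,\dots,C_m\}$, where a cycle-vertex $C_j$ is joined to $v$ with multiplicity equal to the number of times $C_j$ passes through $v$ (so $d_H(v)=d_G(v)/2$ and $d_H(C_j)=|V(C_j)|$ counted with multiplicity $=|E(C_j)|$); then, choosing an integer $k$ with $1/k$ a good rational approximation to... — actually the slickest version is: apply Theorem~\ref{thm:Hilton} in spirit. Since $G$ has even degrees, scale: I would instead realize $\varepsilon d_G(v)$ as a target by a fractional relaxation. Assign each edge of $G$ the weight $\varepsilon$; this is a fractional even subgraph with $d(v)=\varepsilon d_G(v)$. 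Now I want to round it to an integral even subgraph changing each vertex degree by less than $2$. By the cycle decomposition, the fractional even subgraph $\varepsilon\cdot\mathbf{1}_{E(G)}$ equals $\sum_j \varepsilon\cdot\mathbf 1_{E(C_j)}$; rounding the coefficients $\varepsilon$ of the cycles to $0$ or $1$ changes $d_F(v)$ by $\sum_{j\ni v}(\text{round}(\varepsilon)-\varepsilon)\cdot 2$, and I need this total perturbation to have absolute value $<2$ at every $v$ while also the rounded object is genuinely a $0/1$ choice. This is a discrepancy problem for the $\{0,1\}$-matrix of cycle-vertex incidences (each vertex appears $d_G(v)/2$ times among the cycles), and it is solved by the same interval/de~Werra technique used throughout the paper: iteratively, whenever two cycles both pass through a common vertex one can "swap" along an alternating structure to push coefficients toward $0$ and $1$.

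**The main obstacle** I anticipate is exactly this rounding/discrepancy step: a single cycle $C_j$ contributes $2$ to $d_F(v)$ all at once, so na\"ive independent rounding of the $\varepsilon$'s can overshoot by close to $2$ at a vertex lying on many cycles — one must round the cycles in a correlated way so the errors cancel. The right tool is to reduce to a bipartite balanced-orientation or equitable-edge-coloring statement: build the bipartite graph $H$ above, give it an equitable $k$-edge-coloring (Theorem~\ref{thm:Werra}) for a suitable $k$ chosen so that $j/k$ brackets $\varepsilon$, and let $\mathcal S$ be the cycles receiving colors in a chosen block; then $d_F(v)=2\cdot|\{\text{cycles of }\mathcal S\text{ at }v\}|$ differs from $2\cdot(j/k)d_H(v)=\varepsilon d_G(v)\cdot(j/(k\varepsilon))$ by at most $2$ by the equitability bound, and a final adjustment of $k,j$ makes the slack strictly less than $2$. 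Checking that this final arithmetic really yields the strict inequality $<2$ (rather than $\le 2$) at every vertex, including vertices of small degree where $\varepsilon d_G(v)$ sits near an even integer, is the delicate bookkeeping; I expect it to work because the only even integers available already bracket $\varepsilon d_G(v)$ with a gap of exactly $2$, leaving room for the strict bound whenever $\varepsilon d_G(v)$ is not itself an even integer, and when it is, that even integer is an admissible value for $d_F(v)$.
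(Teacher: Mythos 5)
Your reduction to selecting whole cycles from a fixed cycle decomposition has a genuine gap, and the mechanism you propose to carry it out does not do what you need. First, the feasibility of the selection problem depends on \emph{which} cycle decomposition you fix, and for some decompositions no subfamily works. Concretely, take $V(G)=\{a,b,c\}$ with two parallel edges between each pair, so every degree is $4$, and let $\varepsilon=1/2$; the theorem demands $d_F(v)=2$ for every $v$ (the only even value with $|d_F(v)-2|<2$), which is achieved by a triangle, but if you decompose $G$ into the three digons, then every subfamily of digons gives some vertex degree $0$ or $4$. So you cannot start from an arbitrary cycle decomposition; you would have to prove that a \emph{good} decomposition exists, and that claim is essentially equivalent to the theorem itself (given a valid $F$, decompose $F$ and $G\setminus F$ separately into cycles). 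Second, the tool you invoke to do the correlated rounding does not apply: an equitable edge-coloring of the bipartite vertex--cycle incidence graph $H$ (Theorem~\ref{thm:Werra}) partitions the \emph{incidences}, so the $|E(C_j)|$ edges of $H$ at a cycle-vertex $C_j$ receive many different colors, and ``the cycles receiving colors in a chosen block'' is not well defined; a color class of $H$ selects a fraction of each cycle's passages rather than whole cycles, hence does not correspond to an even subgraph of $G$. The alternating-path swaps of de~Werra likewise move single incidences, not entire cycles, so they cannot repair this.

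For comparison, the paper proves the statement (Corollary~\ref{cor:Eulerian}) by an entirely different route: it establishes the general parity $\varepsilon$-factor theorem (Theorem~\ref{thm:parity:version}) via Lov\'asz's characterization of parity $(g,f)$-factors (Lemma~\ref{lem:Lovasz:parity}), choosing $g_0(v)$ and $f_0(v)$ to be the even integers nearest $\lfloor\varepsilon d_G(v)\rfloor-1$ and $\lceil\varepsilon d_G(v)\rceil+1$ and verifying the deficiency inequality; when all degrees are even and $f\equiv 0$, the odd-cut hypotheses of that theorem are vacuous, which yields the unconditional statement. If you want to salvage a cycle-based argument you would need a genuinely new idea for producing the right decomposition; otherwise the parity-factor criterion is the tool to use here.
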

\begin{thm}\label{thm:intro:even-factorization}
{Let $\varepsilon_1,\ldots,\varepsilon_k$ be $k$ positive real numbers with $\varepsilon_1+\cdots +\varepsilon_k=1$.
If $G$ is a graph with even degrees, then it can be edge-decomposed into $k$ even factors $G_1,\ldots, G_k$ such that for each $v\in V(G_i)$,
 $|d_{G_i}(v)-\varepsilon_i d_G(v)|<6.$
}\end{thm}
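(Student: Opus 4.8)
The plan is to prove, by induction on $|S|$, a more flexible statement from which the theorem follows by taking $H=G$, $S=\{1,\dots,k\}$: \emph{for every spanning even subgraph $H$ of $G$ and every nonempty $S\subseteq\{1,\dots,k\}$ with $\big|d_H(v)-\alpha_S\,d_G(v)\big|<6$ for all $v$ (here $\alpha_S=\sum_{i\in S}\varepsilon_i$), the graph $H$ can be edge-decomposed into even factors $(H_i)_{i\in S}$ with $\big|d_{H_i}(v)-\varepsilon_i\,d_G(v)\big|<6$ for all $v$}. The key design choice is that the deviation is always measured against $d_G(v)$, not against the degree of the current subgraph; this is what stops the error from accumulating logarithmically in $k$, as it would if one simply peeled factors off one at a time. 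For $|S|=1$ the statement is exactly the hypothesis, so the base case is trivial.

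For the inductive step, split $S$ into two nonempty parts $S'\sqcup S''$, apply Theorem~\ref{thm:intro:even-even} to $H$ (which has even degrees by induction) with $\varepsilon=\alpha_{S'}/\alpha_S\in(0,1)$ to get an even factor $H'$ of $H$ with $\big|d_{H'}(v)-\tfrac{\alpha_{S'}}{\alpha_S}d_H(v)\big|<2$, put $H''=H-H'$ (again even, since $d_H$ and $d_{H'}$ are even), and recurse on $(H',S')$ and $(H'',S'')$. Writing the deviation as $E(v)=d_H(v)-\alpha_S d_G(v)$, a direct computation gives
\[
d_{H'}(v)-\alpha_{S'}d_G(v)=\tfrac{\alpha_{S'}}{\alpha_S}E(v)+\eta(v)\quad\text{with }|\eta(v)|<2,
\]
and symmetrically for $H''$. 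Hence if the partition is chosen with $\alpha_{S'}\le\tfrac23\alpha_S$ and $\alpha_{S''}\le\tfrac23\alpha_S$, the new deviations are each less than $\tfrac23\cdot6+2=6$, so the inductive hypothesis is restored; since it starts at $E\equiv0$ for $(G,\{1,\dots,k\})$, every factor ends with deviation $<6$. The constant $6$ is precisely the fixed point of $x\mapsto\tfrac23x+2$, i.e.\ $2\sum_{j\ge0}(2/3)^j$, which is where it comes from.

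So everything reduces to partitioning $S$ into two parts each of total weight at most $\tfrac23\alpha_S$. This is routine (e.g.\ by a largest-first greedy rule, or by isolating the heaviest weight once it passes $\tfrac12\alpha_S$) \emph{unless} some $\varepsilon_j$ exceeds $\tfrac23\alpha_S$; there is at most one such $j$, since two would sum to more than $\alpha_S$. This case is the main obstacle. The natural move is to set $S'=\{j\}$: the complement $S''$ then has weight $<\tfrac13\alpha_S$ and its deviation contracts harmlessly, but the factor $H_j$ inherits $E$ with coefficient $\varepsilon_j/\alpha_S$ close to $1$, and a bare bound $|E|<6$ only yields $<8$ for $H_j$. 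To recover $6$ one must ensure that this configuration is only reached when $|E|<4$, so that $\tfrac{\varepsilon_j}{\alpha_S}\cdot4+2<6$; I would arrange this by processing the weights from heaviest to lightest and peeling off a weight as an immediate leaf as soon as it exceeds, say, $\tfrac49$ of the weight of its current context — so a weight that would dominate a deep node is instead removed at a shallow ancestor, where the accumulated deviation is still below $4$. Making this precise amounts to carrying a strengthened invariant that couples the deviation at a node with the largest relative weight present there, and verifying it through both kinds of split is the technical core of the proof. (Theorem~\ref{thm:Hilton} can be used to collapse a near-uniform node into many parts in a single step, shortening the recursion, but it does not change the constant.)
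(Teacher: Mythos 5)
Your overall strategy coincides with the paper's: the paper proves this theorem in one line by citing Corollary~\ref{cor:Eulerian} (which is Theorem~\ref{thm:intro:even-even}) ``along with the same arguments stated in the proof of Theorem 4 in \cite{Correa-Goemans}'', i.e.\ exactly the recursive binary splitting you describe, with deviations measured against $d_G(v)$ and the constant $6$ arising as the fixed point of $x\mapsto \tfrac23 x+2$. Your reduction of the inductive step to the algebra $d_{H'}(v)-\alpha_{S'}d_G(v)=\tfrac{\alpha_{S'}}{\alpha_S}E(v)+\eta(v)$ is correct, as is the observation that a partition of $S$ into two parts of relative weight at most $\tfrac23$ always exists unless a single $\varepsilon_j$ exceeds $\tfrac23\alpha_S$.

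The gap is precisely the case you flag and then defer: a weight that is dominant inside a deep node. Unrolling your recursion, the leaf $i$ reached through non-root ancestors $S_1\supset\cdots\supset S_{d-1}$ has deviation less than $2+2\varepsilon_i\sum_{l=1}^{d-1}1/\alpha_{S_l}$, so what must be produced is a binary tree in which $\varepsilon_i\sum_{l}1/\alpha_{S_l}\le 2$ for every leaf; when all ratios are at most $\tfrac23$ this sum is at most $3\varepsilon_i/\alpha_{S_{d-1}}$, which is fine exactly when the leaf is at most $\tfrac23$ of its parent, and the dominant-weight case is where this fails. Your proposed remedy (peel a weight as soon as it exceeds $\tfrac49$ of its context, plus ``a strengthened invariant that couples the deviation at a node with the largest relative weight present there'') is not verified, and the natural candidate invariants do not close: for instance, a node carrying relative weights $(0.5,0.49,0.01)$ whose accumulated deviation is near $6$ admits no binary split keeping all three leaves below $6$, and one can build weight profiles of the form $\tfrac13,\tfrac29,\tfrac4{27},\dots$ terminating in such a near-dominant pair for which the simple threshold rules (threshold $\tfrac13$, $\tfrac49$, or $\tfrac12$) actually reach such a node. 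So the existence of a good splitting tree for \emph{every} weight profile --- which is the content of Correa and Goemans's argument that the paper imports wholesale --- is asserted but not proved in your write-up, and it is not a routine verification. Everything else in your proposal is correct; to complete it you need to either carry out the coupled invariant carefully or quote the tree construction from \cite{Correa-Goemans} explicitly.
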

%
%
%
%
%
%
%
%
%
\section{Preliminary results on directed graphs}
The following theorem provides a relationship between orientations and equitable factorizations of graphs which is motivated by Theorem 2 in~\cite{Thomassen}. This result plays an important role in the next section.
\begin{thm}\label{thm:factorization:main:directed}
{Every directed graph $G$ can be edge-decomposed into $k$ factors $G_1,\ldots, G_k$
 such that  for each $v\in V(G_i)$ that $1\le i\le k$,
$$ | d^+_{G_i}(v)-d^+_G(v)/k|< 1 \text{ and } | d^-_{G_i}(v)- d^-_G(v)/k|< 1.$$
In particular, $ | d_{G_i}(v)- d_G(v)/k|< 1$, when $ d^+_G(v)$ or $d^-_G(v)$ is divisible by $k$.
Furthermore, one can impose at least one of the following conditions arbitrary:
\begin{enumerate}{
\item [$(i)$] $|\,|E(G_i)|-|E(G)|/k\,|<1$, where $1\le i\le k$.
\item [$(ii)$] If $u\in V(G)$ and $d_G^+(u)\stackrel{k}{\equiv}d_G^-(u)$, then
$d_{G_i}(u)\stackrel{2}{\equiv}(d_G^+(u)-d_G^-(u))/k$, where $1\le i\le k$.
}\end{enumerate}
}\end{thm}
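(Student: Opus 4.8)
The plan is to reduce everything to de Werra's theorem (Theorem~\ref{thm:Werra}) on bipartite graphs by passing to a bipartite ``splitting graph''. For the main statement, given a digraph $G$ form the bipartite graph $B$ with parts $\{v^{+}:v\in V(G)\}$ and $\{v^{-}:v\in V(G)\}$ and, for each arc $(u,v)$ of $G$, an edge $u^{+}v^{-}$; then the arcs of $G$ correspond bijectively to the edges of $B$, with $d_{B}(v^{+})=d_{G}^{+}(v)$ and $d_{B}(v^{-})=d_{G}^{-}(v)$. Applying Theorem~\ref{thm:Werra} to $B$ with parameter $k$ and transporting the decomposition back to $G$ gives $G_{1},\dots,G_{k}$ with $d^{+}_{G_{i}}(v)=d_{B_{i}}(v^{+})$ and $d^{-}_{G_{i}}(v)=d_{B_{i}}(v^{-})$, hence both displayed inequalities. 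The ``in particular'' clause is then a one-line check: if $k\mid d^{+}_{G}(v)$ then $d^{+}_{G_{i}}(v)=d^{+}_{G}(v)/k$ exactly, so $d_{G_{i}}(v)-d_{G}(v)/k=d^{-}_{G_{i}}(v)-d^{-}_{G}(v)/k$, which has absolute value $<1$; the case $k\mid d^{-}_{G}(v)$ is symmetric.

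To obtain $(i)$ I would refine $B$ by ``splitting each out-vertex into copies'': replace $v^{+}$ by $\lceil d^{+}_{G}(v)/k\rceil$ vertices and distribute the arcs out of $v$ among them so that all but at most one receive exactly $k$ arcs, the exceptional ``partial'' copy receiving $s_{v}:=d^{+}_{G}(v)\bmod k$ arcs; then add one new vertex $\zeta$ (on the in-side) and join it to each partial out-copy by $k-s_{v}$ parallel edges, so that every out-copy has degree exactly $k$. Call the resulting bipartite multigraph $\widehat B$ and apply Theorem~\ref{thm:Werra} to it. Since a vertex of degree exactly $k$ gets exactly one edge of each colour in the decomposition, each full out-copy contributes one arc of each colour to $v$ and each partial out-copy contributes $0$ or $1$ arcs of each colour, so $d^{+}_{G_{i}}(v)\in\{\lfloor d^{+}_{G}(v)/k\rfloor,\lceil d^{+}_{G}(v)/k\rceil\}$; the in-degree balance is inherited from Theorem~\ref{thm:Werra} applied at the (unsplit) vertices $v^{-}$. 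Moreover, counting the arcs of $G$ in $G_{i}$ through the out-copies, this number equals $N-d_{\widehat B_{i}}(\zeta)$ where $N=\sum_{v}\lceil d^{+}_{G}(v)/k\rceil$ is the number of out-copies, and the identity $kN-d_{\widehat B}(\zeta)=|E(G)|$ — which holds because $k\lceil d^{+}_{G}(v)/k\rceil=d^{+}_{G}(v)+(k-s_{v})$ whenever $s_{v}\neq 0$ and $d_{\widehat B}(\zeta)=\sum_{v:s_{v}\neq0}(k-s_{v})$ — gives $N-d_{\widehat B}(\zeta)/k=|E(G)|/k$; hence the bound $|d_{\widehat B_{i}}(\zeta)-d_{\widehat B}(\zeta)/k|<1$ from Theorem~\ref{thm:Werra} yields $|\,|E(G_{i})|-|E(G)|/k\,|<1$.

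To obtain $(ii)$ I would instead, for each $u$ with $d^{+}_{G}(u)\stackrel{k}{\equiv}d^{-}_{G}(u)$ and $s_{u}:=d^{+}_{G}(u)\bmod k>0$, split $u^{+}$ and $u^{-}$ into copies as above (so each has one partial copy, of degree $s_{u}$ on each side) and add $k-s_{u}$ parallel edges between the partial out-copy and the partial in-copy of $u$, raising both to degree exactly $k$; then apply Theorem~\ref{thm:Werra}. When $s_{u}=0$ every part receives $\lfloor d^{+}_{G}(u)/k\rfloor$ out-arcs and $\lfloor d^{-}_{G}(u)/k\rfloor$ in-arcs at $u$ and the claimed congruence is immediate. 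When $s_{u}>0$, in each part the unique edge at the partial out-copy of $u$ is an arc of $G$ for exactly $s_{u}$ colours — call this colour set $A$ — and likewise an arc of $G$ for a colour set $B$ of size $s_{u}$ at the partial in-copy; since each of the $k-s_{u}$ padding edges between the two partial copies lies in a distinct colour, and that colour lies outside both $A$ and $B$, comparing cardinalities forces $A=B$. Hence $d_{G_{i}}(u)=\big(\lfloor d^{+}_{G}(u)/k\rfloor+[i\in A]\big)+\big(\lfloor d^{-}_{G}(u)/k\rfloor+[i\in A]\big)\stackrel{2}{\equiv}\lfloor d^{+}_{G}(u)/k\rfloor+\lfloor d^{-}_{G}(u)/k\rfloor\stackrel{2}{\equiv}(d^{+}_{G}(u)-d^{-}_{G}(u))/k$. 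The two constructions modify the partial copies in incompatible ways, which is why only one of $(i)$, $(ii)$ is claimed.

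I expect $(i)$ to be the delicate part: one must balance the class sizes $|E(G_{i})|$ without spoiling the in/out-degree balance, and the mechanism works only because routing the defect of every partial out-copy to the single vertex $\zeta$ makes the global discrepancy $kN-|E(G)|$ appear precisely as $d_{\widehat B}(\zeta)$. A minor technicality in both cases is that Theorem~\ref{thm:Werra} bounds $d_{G_{i}}(v)$ only for $v\in V(G_{i})$: a short counting argument shows that a vertex of degree exactly $k$ cannot be isolated in any part, and for $\zeta$ one either observes that $d_{\widehat B}(\zeta)<k$ makes the required inequality trivial, or first pads $\zeta$ with dummy degree-$k$ out-copies carrying no arcs of $G$ until $d_{\widehat B}(\zeta)>k(k-1)$, which rules isolation out and preserves the identity $kN-d_{\widehat B}(\zeta)=|E(G)|$.
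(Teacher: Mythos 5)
Your argument is correct, and while it shares the paper's basic device of passing to a bipartite splitting graph, the engine is genuinely different. The paper splits \emph{every} vertex $v^{+},v^{-}$ into pieces of degree at most $k$ and applies K\"onig's theorem to obtain a proper $k$-edge-colouring; the degree bounds then come from each full piece seeing each colour exactly once. For item $(i)$ the paper rebalances the class sizes by hand, taking a colouring minimizing $\sum_{i}|m_i-m/k|$ and performing a Kempe-chain exchange along an alternating path, and for item $(ii)$ it adds artificial edges between the two partial pieces $u_0^{+},u_0^{-}$, essentially as you do. You instead invoke de Werra's equitable-colouring theorem (Theorem~\ref{thm:Werra}) as a black box, leaving the in-side (and, for the main statement, everything) unsplit: this makes the main statement a two-line reduction, and, more interestingly, it replaces the Kempe-chain rebalancing in $(i)$ by the auxiliary in-side vertex $\zeta$, whose degree records the total discrepancy $kN-|E(G)|$, so that the size bound $|\,|E(G_i)|-|E(G)|/k\,|<1$ falls out of the ordinary degree bound of Theorem~\ref{thm:Werra} at $\zeta$ via $|E(G_i)|=N-d_{\widehat B_i}(\zeta)$. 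That is a clean alternative mechanism for $(i)$, bought at the price of using the stronger equitable-colouring theorem where the paper only needs K\"onig; your treatment of $(ii)$, including the argument that the two colour sets $A$ and $B$ coincide because both are complements of the set of padding colours, matches the paper's in substance. Your closing worry about isolated vertices is moot, since the $G_i$ are spanning factors and the bound of Theorem~\ref{thm:Werra} is asserted at every vertex.
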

\begin{proof}
{The proof presented here is inspired by the proof of Theorem 2 in~\cite{Thomassen}. 
First split every vertex $v$ into two vertices $v^+$ and $v^-$ such that
 $v^+$ whose incident edges were directed away from $v$ in $G$ and 
$v^-$ whose incident edges were directed toward $v$ in $G$.
In this construction,  every loop in $G$ incident with $v$ is transformed into an edge between $v^+$ and $v^-$.
Now, split every vertex in the new graph into vertices with degrees divisible by $k$, 
except possibly one vertex with degree less than $k$. Call the resulting (loopless) bipartite graph $G_0$.
Since $G_0$ has maximum degree at most $k$,
 it admits a proper edge-coloring with at most $k$ colors $c_1\ldots, c_k$
(to prove this, it is enough to consider a $k$-regular  supergraph of it and apply  K{\"o}nig Theorem~\cite{Konig}).
Let $G_i$ be the factor of $G$ consisting of the edges of $G$ corresponding to the edges with color $c_i$ in $G_0$.
Note that for every vertex $v$, at least $\lfloor d^+_G(v)/k\rfloor$ (resp. $\lfloor d^-_G(v)/k\rfloor$) edges with color $c_i$ are incident with the vertex $v^+$ (resp. $v^-$) in $G_0$.
Moreover, at most $\lceil d^+_G(v)/k\rceil$ (resp. $\lceil d^-_G(v)/k\rceil$)  edges with color $c_i$ are incident with the vertex $v^+$ (resp. $v^-$) in $G_0$. This completes the proof of the first part.
 
First we are going to prove item (i).
Let us consider such a proper edge-coloring with the minimum $\sum_{1\le i\le k}|m_i-m/k|$, where $m_i=|E(G_i)|$ and  $m=|E(G)|$.
We claim that $|m_i-m/k|<1$ for every color $c_i$.
Suppose, to the contrary, that there is a color $c_i$ with $|m_i-m/k|\ge 1$. 
We may assume that $m_i\ge m/k+1$; as the proof of the case $m_i\le m/k-1$ is similar.
Thus there is another color $c_j$ with $c_j<m/k$ so that $m_i> m_j+1$.
Let $H$ be the factor of $G_0$ consisting of the edges that are colored with $c_i$ or $c_j$.
According to the proper edge coloring of $G_0$, the graph $H$ must be the union of some paths and cycles.
 Since all cycles have even size and $m_i> m_j$, it is easy to check that there is a path $P$ 
such that whose edges are colored alternatively by $c_i$ and $c_j$, and also whose end edges are colored with $c_i$. 
Now, it is enough to exchange the colors of the edges of $P$ to find another proper edge-coloring satisfying 
   $m'_i=m_i-1$ and $m'_j=m_j+1$ where $m'_i$ and $m'_j$ are the number of edges having  these new colors.
Since $|m'_j-m/k|+|m'_i-m/k|< (|m_j-m/k|+1)+(|m_i-m/k|-1)$, one can easily derive at a contradiction.
Therefore, $G_1,\ldots, G_k$ are the desired factors we are looking for.

Now, we are going to prove item (ii).
Let us add some artificial edges to the graph $G_0$.
For every vertex $u$ with $d_G^+(u)\stackrel{k}{\equiv}d_G^-(u)\stackrel{k}{\not\equiv}0$,
 denote by $u_0^+$ and $u_0^-$  the two vertices in $G_0$ having the same degree strictly less than $k$ obtained from splitting of $u^+$ and $u^-$.
Add some new artificial edges between these two vertices to construct these vertices with degree $k$.
Apply this method for all such vertices $u$ described above.
Consider a  proper edge-coloring with at most $k$ colors $c_1\ldots, c_k$ for the new resulting bipartite graph $G'_0$.
Similarly, we define $G_i$ to be the factor of $G$ consisting of the edges of $G$ corresponding to the edges with color $c_i$ in $G_0$.
Note that for every vertex $u$ with $d_G^+(u)\stackrel{k}{\equiv}d_G^-(u)\stackrel{k}{\not\equiv}0$, 
either one edge between $u_0^+$ and $u_0^-$ in $G'_0$ (may be an artificial edge) is colored with $c_i$ 
or two edges incident with $u_0^+$ and $u_0^-$ in $G_0$ are colored with $c_i$.
Thus for every vertex $u$ with $d_G^+(u)\stackrel{k}{\equiv}d_G^-(u)$, 
we must have $d^-_{G_i}(u)- d^-_G(u)/k = d^+_{G_i}(u)-d^+_G(u)/k $ and so 
$d_{G_i}(u)\stackrel{2}{\equiv}d^+_{G_i}(u) -d^-_{G_i}(u)=(d_G^+(u)-d_G^-(u))/k$.
Therefore, $G_1,\ldots, G_k$ are again the desired factors we are looking for.
}\end{proof}
\begin{cor}{\rm (\cite{Anstee})}
{Every graph $G$ can be edge-decomposed into $k$ factors $G_1,\ldots, G_k$ such that  for each factor $G_i$, 
$|\,|E(G_i)|-|E(G)|/k\, |<1$, and for each $v\in V(G_i)$,
 $$
 \lfloor\frac{d_G(v)}{2k}\rfloor+\lfloor\frac{d_G(v)+1}{2k}\rfloor  \le d_{G_i}(v)\le\lceil\frac{d_G(v)}{2k}\rceil+\lceil\frac{d_G(v)-1}{2k}\rceil .$$
}\end{cor}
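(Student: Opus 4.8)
The plan is to derive this directly from Theorem~\ref{thm:factorization:main:directed}, the only real work being to equip $G$ with a suitable orientation before applying that theorem. First I would orient the edges of $G$ so that the orientation is \emph{almost balanced} at every vertex, meaning that $|d_G^+(v)-d_G^-(v)|\le 1$ for all $v$; equivalently $\{d_G^+(v),d_G^-(v)\}=\{\lfloor d_G(v)/2\rfloor,\lceil d_G(v)/2\rceil\}$ at every vertex. Such an orientation always exists: add a new matching $M$ joining the (even number of) vertices of odd degree so that $G+M$ has all degrees even, take an Eulerian orientation of each component of $G+M$ (so that $d^+=d^-$ there), and then delete the edges of $M$; each such deletion changes $d^+-d^-$ by at most one at each of its two ends, so the induced orientation of $G$ is almost balanced.

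Next I would apply Theorem~\ref{thm:factorization:main:directed} together with item $(i)$ to this oriented graph. This yields an edge-decomposition $G_1,\ldots,G_k$ of $G$ with $|\,|E(G_i)|-|E(G)|/k\,|<1$ for each $i$, and with $|d_{G_i}^+(v)-d_G^+(v)/k|<1$ and $|d_{G_i}^-(v)-d_G^-(v)/k|<1$ for every vertex $v$ and every $i$. Since one-sided degrees are integers, these strict inequalities say exactly that $\lfloor d_G^+(v)/k\rfloor\le d_{G_i}^+(v)\le\lceil d_G^+(v)/k\rceil$ and similarly for in-degrees. Adding these and using $d_{G_i}(v)=d_{G_i}^+(v)+d_{G_i}^-(v)$ gives
$$\Big\lfloor\frac{d_G^+(v)}{k}\Big\rfloor+\Big\lfloor\frac{d_G^-(v)}{k}\Big\rfloor\le d_{G_i}(v)\le\Big\lceil\frac{d_G^+(v)}{k}\Big\rceil+\Big\lceil\frac{d_G^-(v)}{k}\Big\rceil.$$

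Finally I would rewrite both sides in terms of $d_G(v)$ alone. Because the orientation is almost balanced, $\{d_G^+(v),d_G^-(v)\}=\{\lfloor d_G(v)/2\rfloor,\lceil d_G(v)/2\rceil\}$, so the lower bound equals $\lfloor\lfloor d_G(v)/2\rfloor/k\rfloor+\lfloor\lceil d_G(v)/2\rceil/k\rfloor$ and the upper bound equals $\lceil\lfloor d_G(v)/2\rfloor/k\rceil+\lceil\lceil d_G(v)/2\rceil/k\rceil$. Using the nested-rounding identities $\lfloor\lfloor x\rfloor/k\rfloor=\lfloor x/k\rfloor$ and $\lceil\lceil x\rceil/k\rceil=\lceil x/k\rceil$ for a positive integer $k$, together with $\lceil d_G(v)/2\rceil=\lfloor(d_G(v)+1)/2\rfloor$ and $\lfloor d_G(v)/2\rfloor=\lceil(d_G(v)-1)/2\rceil$, the lower bound becomes $\lfloor d_G(v)/(2k)\rfloor+\lfloor(d_G(v)+1)/(2k)\rfloor$ and the upper bound becomes $\lceil d_G(v)/(2k)\rceil+\lceil(d_G(v)-1)/(2k)\rceil$, which is precisely the claim. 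The only genuine idea is the choice of the almost-balanced orientation; everything after that is routine floor/ceiling bookkeeping, so I do not anticipate a real obstacle.
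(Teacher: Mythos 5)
Your proposal is correct and follows exactly the paper's route: the paper's proof is the one-line "take an orientation with $|d_G^+(v)-d_G^-(v)|\le 1$ at every vertex and apply Theorem~\ref{thm:factorization:main:directed}~(i)," and you have simply filled in the standard construction of that orientation and the floor/ceiling bookkeeping, both of which check out.
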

\begin{proof}
{First, consider an orientation for $G$ such that for each vertex $v$, 
 $|d^+_G(v)-d^-_G(v)|\le 1$.
Next, apply Theorem~\ref{thm:factorization:main:directed} (i) to this directed graph.
}\end{proof}
%
%
%
%
%
%
%
%
%
%
%
%
\section{Factorizations of edge-connected graphs}
In this section, we are going to give a sufficient condition for the existence of equitable factorizations in highly edge-connected graphs. For this purpose,  let us recall the following lemma from~\cite{ModuloBounded, MR3096333}.
\begin{lem}{\rm (\cite{ModuloBounded, MR3096333})}\label{lem:modulo:2k-2:3k-3}
{Let $G$ be a graph, let $k$ be an integer, $k\ge 2$, and let $p:V(G)\rightarrow Z_k$ be a mapping with
$|E(G)| \stackrel{k}{\equiv} \sum_{v\in V(G)}p(v)$.
If $G$ is $(3k-3)$-edge-connected or $(2k-2)$-tree-connected, 
then it admits a $p$-orientation modulo $k$.
Furthermore, the result holds for loopless $(2k-1)$-edge-connected essentially $(3k-3)$-edge-connected graphs provided that $p(v)=0$ or $p(v)\stackrel{k}{\equiv}d_G(v)$ for each vertex $v$.
}\end{lem}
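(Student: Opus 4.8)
\emph{Proof plan.} This is a statement about orientations with prescribed out-degrees modulo $k$, and the plan is to reformulate it as a lattice-point problem and then treat the two sufficient hypotheses by entirely different arguments. Finding a $p$-orientation modulo $k$ amounts to choosing integers $m(v)\equiv p(v)\pmod k$ with $\sum_{v}m(v)=|E(G)|$ for which $G$ admits an orientation with $d^+_G(v)=m(v)$; by the classical prescribed-out-degree orientation theorem (Hakimi), such an orientation exists precisely when $\sum_{v\in X}m(v)\ge |E(G[X])|$ for every $X\subseteq V(G)$ (loops being absorbed into the $m(v)$). Since $X\mapsto |E(G[X])|$ is supermodular, the admissible vectors $m$ form the base polyhedron of a supermodular function, so what must be produced is an integer point of this polyhedron lying in the residue class $p$ modulo $k$; the hypothesis $|E(G)|\equiv\sum_v p(v)\pmod k$ is exactly the necessary congruence on the total.

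For the $(2k-2)$-tree-connected case I would invoke the Nash-Williams/Tutte tree-packing theorem: applied to the partition of $V(G)$ into $X$ together with the singletons of $V(G)\setminus X$, it yields $|E(G[X])|+d_G(X)\ge (2k-2)|X|$ for every $X$, and symmetrically with $X$ replaced by its complement. These inequalities say that along every cut the base polyhedron has slack comparable to $d_G(X)/2\ge k-1$ in each coordinate; I would then start from the balanced selection $m_0(v)$ (the element of the residue class of $p(v)$ nearest $d_G(v)/2$), correct its total to $|E(G)|$ by a sequence of $\pm k$ shifts, and show — via an exchange argument that routes the correction along cuts so that no tight cut is overloaded, i.e.\ using the standard ``lattice point in a base polyhedron'' machinery — that the corrected $m$ still satisfies Hakimi's inequalities. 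Making the cut-respecting correction precise (a naive distribution of the $\pm k$ shifts can violate a tight cut) is the one genuinely technical point of this half.

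For the $(3k-3)$-edge-connected case a one-shot orientation argument cannot work, since a $(3k-3)$-edge-connected graph need not contain a $(2k-2)$-tree-connected spanning subgraph (a partition into many parts can already obstruct it), so I would argue by induction on $|V(G)|+|E(G)|$ in the language of $Z_k$-connectivity: if $H\subseteq G$ is $Z_k$-connected then $G$ has a $p$-orientation modulo $k$ iff $G/H$ does, so it suffices to exhibit in every $(3k-3)$-edge-connected graph that is not already trivial a small $Z_k$-connected configuration — a short cycle with multiplicities, a suitable theta-subgraph, or a low-degree vertex together with part of its neighbourhood — whose contraction preserves $(3k-3)$-edge-connectivity, and then apply induction; Mader's splitting-off theorem handles high-degree vertices while keeping the graph $(3k-3)$-edge-connected, with the out-degree bookkeeping at the split vertex folded into $p$. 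This is essentially the scheme of Lov\'asz--Thomassen--Wu--Zhang, and the loopless, essentially-$(3k-3)$-edge-connected refinement with $p(v)\in\{0,\,d_G(v)\bmod k\}$ fits the same framework: the restricted choice of $p(v)$ is exactly what lets a vertex whose incident cut has size only $2k-1$ (possibly below $3k-3$) still be handled locally. I expect the main obstacle to be this structural part of the induction — assembling a finite list of reducible configurations, proving each is $Z_k$-connected (small instances of $Z_k$-connectivity, done by hand or by a short flow argument), and a discharging-type argument showing one of them always occurs in a graph of minimum degree at least $3k-3$.
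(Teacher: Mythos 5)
First, a point of reference: the paper does not prove this lemma at all --- it is imported from \cite{MR3096333} (the Lov\'asz--Thomassen--Wu--Zhang theorem) and from \cite{ModuloBounded} --- so the only meaningful comparison is with the proofs in those sources. Your opening reduction via Hakimi's orientation theorem, to finding $m(v)\equiv p(v)\pmod k$ with $|E(G[X])|\le\sum_{v\in X}m(v)\le |E(G[X])|+d_G(X)$ for every $X$, is correct. The gap is in how you propose to solve this system in the $(2k-2)$-tree-connected case. There is no ``standard lattice point in a base polyhedron machinery'' that does what you need: substituting $m=ky+r$ turns the constraints into $\lceil(f(X)-r(X))/k\rceil\le y(X)\le\lfloor(g(X)-r(X))/k\rfloor$, and the rounding destroys super/submodularity, so the restricted system is no longer a base polyhedron and LP-feasibility no longer implies the existence of an integer point. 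A per-cut window of width $d_G(X)\ge 2k-2$ is nowhere near sufficient to hit a prescribed residue class on every cut simultaneously. The step you defer as ``the one genuinely technical point'' is in fact the entire content of the theorem; if such an exchange argument existed at this level of generality, Jaeger's weak $3$-flow conjecture would not have stood for three decades. The actual proofs (Thomassen \cite{MR2885433}, then \cite{MR3096333} and \cite{ModuloBounded}) establish a strictly stronger inductive statement featuring a distinguished vertex $z_0$ with pre-oriented incident edges and a degree bound tied to its target, and proceed by splitting off pairs of edges at $z_0$ and contracting; nothing polyhedral survives in them.

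For the $(3k-3)$-edge-connected half you correctly name the Lov\'asz--Thomassen--Wu--Zhang scheme, but what you actually describe --- a finite list of reducible $Z_k$-connected configurations whose presence is forced by discharging in a graph of minimum degree $3k-3$ --- is not that proof and is not supported by any argument here: you exhibit no configuration and no reason one must occur, and high minimum degree alone does not force small contractible subgraphs (a $(3k-3)$-edge-connected simple graph of large girth has no short cycles or theta-subgraphs to contract). The LTWZ argument is again the lifting-and-contraction induction on the strengthened statement, with the edge-connectivity used globally to find a vertex or a small cut to process, not a local reducibility search. Finally, the ``furthermore'' clause (loopless, $(2k-1)$-edge-connected, essentially $(3k-3)$-edge-connected, with $p(v)\in\{0,\,d_G(v)\bmod k\}$) is dispatched in one sentence, but it is a separate result of \cite{ModuloBounded} needing its own reduction through the small non-trivial edge cuts. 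In short, the Hakimi reformulation and the general inductive outline point in a reasonable direction, but both halves rest on machinery that either does not exist in the claimed generality or is not supplied.
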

Before stating the main result, we begin with the following theorem that only needs replacing a weaker condition on  a single arbitrary vertex. 
\begin{thm}\label{thm:factorization:main}
{Let $G$ be a graph with $z\in V(G)$ and let $k$ be a positive integer. 
If $G$ is $(3k-3)$-edge-connected or $(2k-2)$-tree-connected,
 then it can be edge-decomposed into $k$ factors $G_1,\ldots, G_k$ satisfying $||E(G_i)|-|E(G)|/k|<1$   such that for each $v\in V(G_i)$,
 $$ | d_{G_i}(v)-d_G(v)/k| < \begin{cases}
2	&\text{when $v= z$};\\
1,	&\text{otherwise}.
\end {cases}$$
Furthermore, for the vertex $z$, we can also have $| d_{G_i}(z)-d_{G_j}(z)|\le 2$.
}\end{thm}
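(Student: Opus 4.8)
The plan is to reduce everything to the directed setting of Theorem~\ref{thm:factorization:main:directed} by first choosing an orientation of $G$ in which every vertex other than $z$ has out-degree divisible by $k$. The case $k=1$ is trivial (take $G_1=G$), so I would assume $k\ge 2$. Then I would define $p\colon V(G)\to Z_k$ by $p(v)=0$ for every $v\neq z$ and $p(z)\equiv |E(G)|\pmod{k}$. Thus $\sum_{v\in V(G)}p(v)=p(z)\equiv|E(G)|\pmod{k}$, so, since $G$ is $(3k-3)$-edge-connected or $(2k-2)$-tree-connected, Lemma~\ref{lem:modulo:2k-2:3k-3} yields a $p$-orientation of $G$ modulo $k$; denote the resulting directed graph by $D$. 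By construction $d^+_D(v)\equiv0\pmod{k}$ for every $v\neq z$ (and hence $d^-_D(v)=d_G(v)-d^+_D(v)\equiv d_G(v)\pmod{k}$ as well).

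Next I would apply Theorem~\ref{thm:factorization:main:directed} to $D$ with the optional property $(i)$ imposed. This produces an edge-decomposition of $D$ into $k$ factors $G_1,\ldots,G_k$ with $|\,|E(G_i)|-|E(G)|/k\,|<1$ and with $|d^+_{G_i}(v)-d^+_D(v)/k|<1$ and $|d^-_{G_i}(v)-d^-_D(v)/k|<1$ for every vertex $v$ and every $i$. Regarding each $G_i$ as an undirected factor of $G$, we have $d_{G_i}(v)=d^+_{G_i}(v)+d^-_{G_i}(v)$, and the edge-count requirement of the statement is already met.

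It then remains to extract the degree bounds. If $v\neq z$, then $d^+_D(v)$ is divisible by $k$, so the ``in particular'' clause of Theorem~\ref{thm:factorization:main:directed} gives $|d_{G_i}(v)-d_G(v)/k|<1$ directly. If $v=z$, then
$$|d_{G_i}(z)-d_G(z)/k|\le|d^+_{G_i}(z)-d^+_D(z)/k|+|d^-_{G_i}(z)-d^-_D(z)/k|<2.$$
For the final assertion, observe that $d^+_{G_i}(z)$ is an integer within distance strictly less than $1$ of the fixed rational number $d^+_D(z)/k$, hence equals $\lfloor d^+_D(z)/k\rfloor$ or $\lceil d^+_D(z)/k\rceil$; therefore $|d^+_{G_i}(z)-d^+_{G_j}(z)|\le1$ for all $i,j$, and likewise $|d^-_{G_i}(z)-d^-_{G_j}(z)|\le1$, whence $|d_{G_i}(z)-d_{G_j}(z)|\le2$.

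The entire content of the argument is carried by the two imported results, so the work is essentially bookkeeping: concentrating the whole modular discrepancy of $|E(G)|$ at the single vertex $z$ (which is precisely what makes $z$ the unique vertex needing the weaker bound), verifying the congruence hypothesis $|E(G)|\equiv\sum_v p(v)$ of Lemma~\ref{lem:modulo:2k-2:3k-3}, and selecting clause $(i)$ of Theorem~\ref{thm:factorization:main:directed} so as to keep the edge counts balanced. I do not expect a genuine obstacle; the only point warranting slight care is the elementary fact used at the end — that an integer within distance $<1$ of a rational is confined to two consecutive values — which is what sharpens the $z$-bound into the pairwise estimate $|d_{G_i}(z)-d_{G_j}(z)|\le2$.
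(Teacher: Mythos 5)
Your proposal is correct and follows essentially the same route as the paper: concentrate the residue $|E(G)| \bmod k$ at $z$ via Lemma~\ref{lem:modulo:2k-2:3k-3}, then invoke Theorem~\ref{thm:factorization:main:directed}~(i). The paper states this in two lines; your version merely fills in the bookkeeping (the congruence check, the triangle inequality at $z$, and the two-consecutive-values observation for the pairwise bound), all of which is sound.
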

\begin{proof}
{According to Lemma~\ref{lem:modulo:2k-2:3k-3}, the graph $G$ admits an orientation such that for each $v\in V(G)\setminus z $, $d^+_G(v)\stackrel{k}{\equiv}0$, and $d^+_G(z)\stackrel{k}{\equiv}|E(G)|$.
Now, it is enough to apply Theorem~\ref{thm:factorization:main:directed} (i).
}\end{proof}
Now, we are reedy to state the main result of this section  which is a strengthened version of Theorem~\ref{Intro:thm:Z}.
\begin{thm}\label{thm:factorization:Z}
{Let $G$ be a graph and let $k$ be a positive integer. 
Assume that $G$ is $(3k-3)$-edge-connected or $(2k-2)$-tree-connected.
If there is a vertex set $Z$ with $|E(G)|\stackrel{k}{\equiv}\sum_{v\in Z}d_G(v)$,
 then $G$ can be edge-decomposed into $k$ factors $G_1,\ldots, G_k$ satisfying $||E(G_i)|-|E(G)|/k|<1$ such that for each $v\in V(G_i)$, 
 $$|d_{G_i}(v)-d_G(v)/k|< 1 .$$
Furthermore, the result holds for loopless $(2k-1)$-edge-connected essentially $(3k-3)$-edge-connected graphs.
}\end{thm}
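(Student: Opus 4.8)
The plan is to reduce the statement to the directed case settled by Theorem~\ref{thm:factorization:main:directed}(i), by first orienting $G$ so that at every vertex one of the two semidegrees is divisible by $k$. The case $k=1$ is trivial (take $G_1=G$), so assume $k\ge 2$. Define $p\colon V(G)\to Z_k$ by setting $p(v)\stackrel{k}{\equiv}d_G(v)$ for $v\in Z$ and $p(v)=0$ for $v\notin Z$. Then $\sum_{v\in V(G)}p(v)\stackrel{k}{\equiv}\sum_{v\in Z}d_G(v)\stackrel{k}{\equiv}|E(G)|$ by hypothesis, so Lemma~\ref{lem:modulo:2k-2:3k-3}, invoked through the $(3k-3)$-edge-connectivity or $(2k-2)$-tree-connectivity of $G$, furnishes a $p$-orientation of $G$ modulo $k$.

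Next I would verify the divisibility property of this orientation: for $v\notin Z$ we have $d_G^+(v)\stackrel{k}{\equiv}p(v)=0$, and for $v\in Z$ we have $d_G^+(v)\stackrel{k}{\equiv}d_G(v)$, whence $d_G^-(v)=d_G(v)-d_G^+(v)\stackrel{k}{\equiv}0$. Thus every vertex $v$ has $d_G^+(v)$ or $d_G^-(v)$ divisible by $k$. Feeding this directed graph into Theorem~\ref{thm:factorization:main:directed}(i) yields factors $G_1,\dots,G_k$ with $\bigl|\,|E(G_i)|-|E(G)|/k\,\bigr|<1$, and its ``in particular'' clause then gives $|d_{G_i}(v)-d_G(v)/k|<1$ for every $v\in V(G_i)$, which is exactly the conclusion sought.

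For the final sentence, the point is that this particular $p$ takes only the values $0$ and $d_G(v)\bmod k$, so it meets the side condition ``$p(v)=0$ or $p(v)\stackrel{k}{\equiv}d_G(v)$'' under which the last part of Lemma~\ref{lem:modulo:2k-2:3k-3} still produces a $p$-orientation modulo $k$ for a loopless graph that is $(2k-1)$-edge-connected and essentially $(3k-3)$-edge-connected; the rest of the proof is unchanged. I do not expect a genuine obstacle: the substance is the choice of $p$, with the congruence hypothesis on $Z$ being precisely its admissibility condition. The only mild cautions are to invoke Theorem~\ref{thm:factorization:main:directed} with option (i) rather than (ii), so that the values $|E(G_i)|$ stay balanced, and to note that any loops of $G$ are harmless, since in the construction behind that theorem they turn into $v^+v^-$ edges and do not affect the semidegree congruences established above.
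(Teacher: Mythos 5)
Your proposal is correct and follows essentially the same route as the paper: define $p(v)=d_G(v)$ on $Z$ and $p(v)=0$ elsewhere, obtain a $p$-orientation modulo $k$ from Lemma~\ref{lem:modulo:2k-2:3k-3} so that each vertex has one semidegree divisible by $k$, and conclude via Theorem~\ref{thm:factorization:main:directed}(i). Your explicit treatment of the loopless essentially-edge-connected case via the side condition of the lemma is exactly the intended (and in the paper only implicit) justification of the final sentence.
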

\begin{proof}
{For each $v\in Z$, define $p(v)=d_G(v)$, and for each $v\in V(G)\setminus Z$, define $p(v)=0$.
By the assumption, we have $|E(G)|\stackrel{k}{\equiv}\sum_{v\in V(G)}p(v)$.
Thus by Lemma~\ref{lem:modulo:2k-2:3k-3}, the graph $G$ admits a $p$-orientation modulo $k$
 so that for each $v\in Z$, $d^-_G(v)$ is visible by $k$ and for each $v\in V(G)\setminus Z$, $d^+_G(v)$ is divisible by $k$.
Hence the assertion follows from Theorem~\ref{thm:factorization:main:directed} (i).
}\end{proof}
Graphs with size divisible by $k$ are natural candidates for graphs satisfying the assumptions of Theorem~\ref{thm:factorization:Z}. We examine them to deduce the following corollary.
\begin{cor}\label{cor:size:divisible-by-k}
{Let $G$ be a graph of size divisible by $k$.
If $G$ is $(3k-3)$-edge-connected or $(2k-2)$-tree-connected,
 then it can be edge-decomposed into $k$ factors $G_1,\ldots, G_k$ with the same size such that for each $v\in V(G_i)$, 
 $$|d_{G_i}(v)-d_G(v)/k|< 1 .$$
}\end{cor}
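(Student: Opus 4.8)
Corollary~\ref{cor:size:divisible-by-k} states: if $G$ has size divisible by $k$ and is $(3k-3)$-edge-connected or $(2k-2)$-tree-connected, then it can be edge-decomposed into $k$ factors $G_1,\ldots,G_k$ with the same size such that for each $v \in V(G_i)$, $|d_{G_i}(v) - d_G(v)/k| < 1$.

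\textbf{Proof proposal.} The plan is to derive this directly from Theorem~\ref{thm:factorization:Z} by choosing the vertex set $Z$ to be empty. First I would observe that when $Z=\emptyset$ the sum $\sum_{v\in Z}d_G(v)$ equals $0$, so the divisibility hypothesis $|E(G)|\equiv 0\pmod k$ gives exactly the congruence $|E(G)|\stackrel{k}{\equiv}\sum_{v\in Z}d_G(v)$ required in Theorem~\ref{thm:factorization:Z}. Hence that theorem applies verbatim under either the $(3k-3)$-edge-connectivity or the $(2k-2)$-tree-connectivity hypothesis, and produces factors $G_1,\ldots,G_k$ with $\bigl||E(G_i)|-|E(G)|/k\bigr|<1$ and with $|d_{G_i}(v)-d_G(v)/k|<1$ for every $v\in V(G_i)$.

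The only remaining point is to upgrade the size estimate $\bigl||E(G_i)|-|E(G)|/k\bigr|<1$ to the equality $|E(G_i)|=|E(G)|/k$. This is immediate: since $k$ divides $|E(G)|$, the quantity $|E(G)|/k$ is an integer, and an integer $|E(G_i)|$ lying strictly within distance $1$ of another integer must coincide with it. Therefore all $k$ factors have the same size, namely $|E(G)|/k$, and the degree bound is precisely the one asserted. There is no genuine obstacle here; the statement is a packaging of Theorem~\ref{thm:factorization:Z} in the most natural special case, and the write-up amounts to the two short observations above.
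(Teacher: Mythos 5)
Your proposal is correct and matches the paper's own proof, which likewise applies Theorem~\ref{thm:factorization:Z} with $Z=\emptyset$ and notes that $|E(G)|$ is divisible by $k$. Your additional remark that the bound $||E(G_i)|-|E(G)|/k|<1$ forces exact equality of sizes is the right (and only) extra observation needed.
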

\begin{proof}
{Apply Theorem~\ref{thm:factorization:Z} with $Z=\emptyset$.  Note that $|E(G)|$ is divisible by $k$.
}\end{proof}
The next corollary gives a criterion for  the existence of a $3$-equitable factorization in edge-connected graphs.
\begin{cor}
{Let $G$ be a $6$-edge-connected graph.
Then $G$ has not exactly one vertex $z$ with $d_G(z) \stackrel{3}{\not\equiv}0$ if and only if it can be edge-decomposed into three factors $G_1$, $G_2$, and $G_3$ such that for each $v\in V(G_i)$, 
 $$|d_{G_i}(v)-d_G(v)/3|< 1 .$$
Furthermore, the result holds for loopless $5$-edge-connected essentially $6$-edge-connected graphs.
}\end{cor}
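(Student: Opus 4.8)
The plan is to reduce this corollary to Theorem~\ref{thm:factorization:Z} applied with $k=3$, so that the core of the work is to find a suitable vertex set $Z$ with $|E(G)|\stackrel{3}{\equiv}\sum_{v\in Z}d_G(v)$ whenever $G$ does not have exactly one vertex of degree not divisible by $3$, and conversely to show that no equitable $3$-factorization can exist in the exceptional case.

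For the forward direction (the interesting implication), first I would observe that $\sum_{v\in V(G)}d_G(v)=2|E(G)|$, so modulo $3$ we have $\sum_{v\in V(G)}d_G(v)\stackrel{3}{\equiv}2|E(G)|\stackrel{3}{\equiv}-|E(G)|$; hence $\sum_{v\in V(G)}d_G(v)+|E(G)|\stackrel{3}{\equiv}0$. Let $W=\{v\in V(G):d_G(v)\stackrel{3}{\not\equiv}0\}$; since only vertices in $W$ contribute modulo $3$, it suffices to find $Z\subseteq W$ with $\sum_{v\in Z}d_G(v)\stackrel{3}{\equiv}|E(G)|$. Writing $s\equiv|E(G)|\pmod 3$ and grouping the vertices of $W$ by whether their degree is $\equiv 1$ or $\equiv 2\pmod 3$, this becomes an elementary additive problem in $\mathbb{Z}_3$: I would argue that if $|W|\ne 1$ then every residue $s\in\mathbb{Z}_3$ is representable as a subset sum of the multiset of degree-residues of $W$. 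The only obstruction to representing every element of $\mathbb{Z}_3$ is $|W|\le 1$, and when $|W|=0$ one has $s\equiv 0$ automatically (take $Z=\emptyset$); when $|W|=1$, say $W=\{z\}$, the attainable subset sums are only $\{0,d_G(z)\bmod 3\}$, which misses exactly one nonzero residue — and this is precisely the excluded situation. In all remaining cases one produces the desired $Z$ and invokes Theorem~\ref{thm:factorization:Z} with $k=3$ (note $3k-3=6$, matching the $6$-edge-connectivity hypothesis, and for the ``furthermore'' clause the loopless $5$-edge-connected essentially $6$-edge-connected case is covered verbatim by the last sentence of that theorem).

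For the reverse direction I would argue contrapositively: suppose $G$ has exactly one vertex $z$ with $d_G(z)\stackrel{3}{\not\equiv}0$, and suppose for contradiction that $G_1,G_2,G_3$ is an equitable factorization, i.e.\ $|d_{G_i}(v)-d_G(v)/3|<1$ for every $v\in V(G_i)$. For each vertex $v\ne z$, $3\mid d_G(v)$ forces $d_{G_i}(v)=d_G(v)/3$ for all $i$; for $z$, the three values $d_{G_1}(z),d_{G_2}(z),d_{G_3}(z)$ are the integers nearest $d_G(z)/3$ and must sum to $d_G(z)$, which is impossible to do with all three strictly within distance $1$ of the non-integer $d_G(z)/3$ — more precisely, summing $\sum_i|E(G_i)|$ one finds $\sum_i d_{G_i}(z)=d_G(z)$ while each $d_{G_i}(z)\in\{\lfloor d_G(z)/3\rfloor,\lceil d_G(z)/3\rceil\}$, and since $3\nmid d_G(z)$ the number of $i$ with $d_{G_i}(z)=\lceil d_G(z)/3\rceil$ is forced to be $\equiv d_G(z)\pmod 3$, i.e.\ either $1$ or $2$, which is consistent — so the genuine obstruction is a parity/counting one at the single vertex $z$: I would instead sum over the whole graph to get a global contradiction from $\sum_i|E(G_i)|=|E(G)|$ together with the rigidity at all vertices $\ne z$, exploiting that changing the edge count in one $G_i$ near $z$ cannot be balanced elsewhere.

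The main obstacle I anticipate is the reverse direction: making the ``exactly one bad vertex forbids the factorization'' argument airtight, since the naive local count at $z$ alone is actually satisfiable, and one must instead derive the contradiction from a global handshake identity combined with the fact that every other vertex is completely rigid. The forward direction is essentially bookkeeping in $\mathbb{Z}_3$ plus a citation of Theorem~\ref{thm:factorization:Z}.
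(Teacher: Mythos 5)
Your forward direction is correct and is essentially the paper's argument: let $Z_0$ be the set of vertices with degree not divisible by $3$; when $|Z_0|=0$ the identity $2|E(G)|=\sum_v d_G(v)\stackrel{3}{\equiv}0$ forces $3\mid |E(G)|$ so $Z=\emptyset$ works, and when $|Z_0|\ge 2$ any two degree-residues from $\{1,2\}$ already generate all of $Z_3$ as subset sums, so a suitable $Z$ exists and Theorem~\ref{thm:factorization:Z} (including its essentially-$6$-edge-connected clause) finishes the job.

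The reverse direction, however, has a genuine gap, and you have flagged it yourself without closing it. You correctly observe that the local count at $z$ is satisfiable (the number of indices $i$ with $d_{G_i}(z)=\lceil d_G(z)/3\rceil$ can be $1$ or $2$), but the repair you propose --- summing $\sum_i |E(G_i)|=|E(G)|$ over all three factors --- cannot yield a contradiction: from $2|E(G_i)|=d_{G_i}(z)+\sum_{v\ne z}d_G(v)/3$ the sum over $i$ collapses to the tautology $2|E(G)|=d_G(z)+(2|E(G)|-d_G(z))$. The information you need lives in each factor separately, not in their sum. The paper's argument is: apply the handshaking lemma to each $G_i$ individually, so $d_{G_i}(z)\stackrel{2}{\equiv}-\sum_{v\ne z}d_G(v)/3$, and since the right-hand side is the same for all $i$ (by the rigidity $d_{G_i}(v)=d_G(v)/3$ at every $v\ne z$), all three values $d_{G_1}(z),d_{G_2}(z),d_{G_3}(z)$ must have the same parity. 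But since $3\nmid d_G(z)$ and the three values sum to $d_G(z)$, at least one equals $\lfloor d_G(z)/3\rfloor$ and at least one equals $\lceil d_G(z)/3\rceil$, and these differ by exactly $1$, hence have opposite parities --- contradiction. Replacing your global edge-count sum by this per-factor parity comparison completes the proof.
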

\begin{proof}
{Set $Z_0$ to be the set of all vertices of $G$ such that whose degrees are not divisible by $3$.
First assume that $|Z_0|\neq 1$. If $|Z_0|=0$, then $2|E(G)|$ must be divisible by $3$ and so $|E(G)|$ must be divisible by $3$.
Therefore, it is easy to check that there is a subset $Z$ of $Z_0$ such that 
$|E(G)|\stackrel{3}{\equiv}\sum_{v\in Z}d_G(v)$ whether $|Z_0|\ge 2$ or not. 
 Hence  by  Theorem~\ref{thm:factorization:Z}, the graph $G$ has the desired factorization.
Now, assume  $Z_0=\{z\}$.
Suppose, to the contrary, that $G$ can be edge-decomposed into three factors $G_1$, $G_2$, and $G_3$ such that for each $v\in V(G_i)$, $|d_{G_i}(v)-d_G(v)/3|< 1$.
We may therefore assume that 
$d_{G_1}(z)=\lfloor d_G(z)/3\rfloor$ and 
$d_{G_2}(z)=\lceil d_G(z)/3\rceil$. On the other hand, for all vertices $v\in V(G)\setminus \{z\}$, $d_{G_1}(v)=d_{G_2}(v)=d_G(v)/3$ which implies that $d_{G_1}(z)$ and $d_{G_2}(z)$ have the same parity, because of the handshaking lemma. This is contradiction, as desired. 
}\end{proof}
As we observed above, graphs with a number of vertices whose degrees are not divisible by $k$ are other natural candidates 
for graphs satisfying the assumptions of Theorem~\ref{thm:factorization:Z}. By employing the following lemma, we examine a special case  of them to imply the next corollary.
\begin{lem}{\rm (Chowla \cite{Chowla})}\label{lem:Chowla}
{Let $k$ be an integer number with $k\ge 2$ and let $x_1, \ldots, x_{k-1}\in Z_k\setminus \{0\}$ be $k-1$ integer numbers coprime with $k$ (not necessarily distinct).
If $m\in Z_k$, then there is a subset $Z\subseteq \{1,\ldots, k-1\}$ such that
 $m\stackrel{k}{\equiv}\sum_{i\in Z}x_i$.
}\end{lem}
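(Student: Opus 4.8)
The plan is to prove Chowla's lemma directly by induction on $k$, using the fact that the partial sums of the sequence $x_1,\ldots,x_{k-1}$ generate enough residues. First I would handle the base case $k=2$: here there is a single element $x_1$ coprime with $2$, so $x_1\stackrel{2}{\equiv}1$, and the two subset sums $\sum_{i\in\emptyset}x_i=0$ and $\sum_{i\in\{1\}}x_i=1$ realize both residues, so the claim holds for every $m\in Z_2$.

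For the inductive step with $k\ge 3$, the key idea is the standard Davenport-constant style argument. Consider the $k$ partial sums $s_0=0$ and $s_j=x_1+\cdots+x_j$ for $1\le j\le k-1$, so we have $k$ values in $Z_k$. If these $k$ values are pairwise distinct modulo $k$, then they exhaust $Z_k$, and in particular $m$ equals one of them, say $m\stackrel{k}{\equiv}s_j$, giving the desired subset $Z=\{1,\ldots,j\}$. Otherwise two of them coincide, say $s_a\stackrel{k}{\equiv}s_b$ with $a<b$, which means $x_{a+1}+\cdots+x_b\stackrel{k}{\equiv}0$. Let $d=\gcd(b-a,k)$; since each $x_i$ with $a<i\le b$ is coprime to $k$, one checks that $d<k$, and the $b-a$ equal-length shifts of this zero-sum block let us either reduce the problem to a smaller modulus $k/d$ (after replacing the $x_i$'s by suitable sums of consecutive blocks, which remain coprime to $k/d$) or directly extract a subset summing to any prescribed residue. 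Concretely, since $\sum_{a<i\le b}x_i\equiv 0\pmod k$, I would throw away the block $x_{a+1},\ldots,x_b$ entirely when $b-a$ is a multiple of... no: rather, I would argue that among the remaining $k-1-(b-a)$ variables together with the option of re-including consecutive sub-blocks of the zero-sum stretch, one reaches all residues. The cleanest route is: if $b-a<k-1$ we still have at least one leftover variable and we induct on the number of variables by a telescoping argument; if $b-a=k-1$ then \emph{every} $x_i$ is involved and $s_0,\ldots,s_{k-1}$ are $k$ residues with a repeat, so actually the multiset $\{s_0,\ldots,s_{k-1}\}$ still must hit $m$ unless it misses some residue, at which point a counting/gcd argument on the step sizes forces a contradiction with the coprimality hypothesis.

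I expect the main obstacle to be making the repeated-partial-sum reduction genuinely rigorous rather than hand-wavy: specifically, showing that after contracting a zero-sum consecutive block one obtains a shorter sequence of the same form (length $k'-1$ with all entries coprime to the new modulus $k'$) so that the induction hypothesis applies. A robust way to sidestep delicate case analysis is to prove the slightly stronger statement that the $k$ partial sums $s_0,s_1,\ldots,s_{k-1}$ are \emph{all distinct} modulo $k$ whenever the $x_i$ are all coprime to $k$; if $s_a\equiv s_b$ with $0\le a<b\le k-1$ then $x_{a+1}+\cdots+x_b\equiv 0\pmod k$, and here one uses a clever auxiliary substitution (replace each $x_i$ by $c x_i$ for a unit $c$ chosen so the block becomes all $1$'s is impossible in general, but one can at least assume WLOG by scaling that $x_{a+1}=1$, forcing $x_{a+2}+\cdots+x_b\equiv -1$, and iterate) — but this also does not obviously terminate. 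Given these subtleties, in the write-up I would most likely just cite Chowla's original paper for the full argument as the paper already does, and present only the short base case plus the partial-sum observation as the conceptual heart, noting that the distinctness-or-zero-sum dichotomy combined with induction on $k$ completes the proof.

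\begin{remark}\em{.
Since Lemma~\ref{lem:Chowla} is classical and due to Chowla, the reader is referred to \cite{Chowla} for a complete proof; the sketch above indicates the flavour of the pigeonhole-on-partial-sums argument that underlies it.
}\end{remark}
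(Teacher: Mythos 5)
The paper itself gives no proof of this lemma --- it is simply attributed to Chowla --- so ending with a bare citation is consistent with what the paper does. The difficulty is that the mathematical content you do include is not a correct sketch of the underlying argument. Your central device, the prefix sums $s_0,\dots,s_{k-1}$, only ever produces subsets that are consecutive blocks, whereas the lemma is about arbitrary subsets of indices, and prefix sums genuinely do not suffice. Worse, the ``slightly stronger statement'' you propose as a robust fallback --- that $s_0,\dots,s_{k-1}$ are all distinct modulo $k$ whenever every $x_i$ is a unit --- is false: take $k=4$ and $(x_1,x_2,x_3)=(1,3,1)$; then $s_0=s_2=0$ and $s_1=s_3=1$, so the prefix sums hit only two residues, even though the set of all subset sums is all of $Z_4$. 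So the ``distinctness-or-zero-sum dichotomy'' does not close, as you yourself half-suspect, and the contraction-of-a-zero-sum-block reduction is never made to work.

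The standard one-paragraph proof goes through arbitrary subset sums rather than prefixes. Let $S_j\subseteq Z_k$ denote the set of residues of $\sum_{i\in Z}x_i$ over all $Z\subseteq\{1,\dots,j\}$, so that $S_0=\{0\}$ and $S_j=S_{j-1}\cup(S_{j-1}+x_j)$. If $S_j=S_{j-1}$, then $S_{j-1}+x_j\subseteq S_{j-1}$, hence $S_{j-1}+x_j=S_{j-1}$ by finiteness, so $S_{j-1}$ is a union of cosets of the cyclic subgroup generated by $x_j$, which is all of $Z_k$ because $x_j$ is coprime with $k$; thus $S_{j-1}=Z_k$. Consequently either some $S_{j-1}$ already equals $Z_k$, or $|S_j|\ge |S_{j-1}|+1$ for every $j$, and in either case $|S_{k-1}|\ge k$, i.e.\ $S_{k-1}=Z_k$, so every residue $m$ is realized by some subset $Z$. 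If you want to present any argument at all rather than a bare citation, this is the one to give; your base case $k=2$ is fine but the inductive machinery should be replaced by the growth argument above.
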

\begin{cor}\label{cor:prime}
{Let $G$ be a $(3k-3)$-edge-connected graph where $k$ is a prime number.
If $G$ contains at least $k-1$ vertices whose degree are coprime with $k$, then it can be edge-decomposed into $k$ factors $G_1,\ldots, G_k$ satisfying $||E(G_i)|-|E(G)|/k|<1$ such that for each $v\in V(G_i)$, 
 $|d_{G_i}(v)-d_G(v)/k|< 1.$
}\end{cor}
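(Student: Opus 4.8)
The plan is to deduce Corollary~\ref{cor:prime} directly from Theorem~\ref{thm:factorization:Z}: the only thing missing from the hypotheses of that theorem is a vertex set $Z$ with $|E(G)|\stackrel{k}{\equiv}\sum_{v\in Z}d_G(v)$, and once such a $Z$ is produced, the $(3k-3)$-edge-connectivity of $G$ lets Theorem~\ref{thm:factorization:Z} deliver the asserted factorization, including the size condition $||E(G_i)|-|E(G)|/k|<1$ and the degree condition $|d_{G_i}(v)-d_G(v)/k|<1$. The device for building such a $Z$ from the hypothesis on the degrees is precisely Chowla's lemma (Lemma~\ref{lem:Chowla}).

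Concretely, first I would fix $k-1$ \emph{distinct} vertices $v_1,\ldots,v_{k-1}$ of $G$ whose degrees are coprime with $k$; these exist by assumption. Set $x_i$ to be the image of $d_G(v_i)$ in $Z_k$ for $1\le i\le k-1$, and let $m$ be the image of $|E(G)|$ in $Z_k$. Since $\gcd(d_G(v_i),k)=1$, each $x_i$ lies in $Z_k\setminus\{0\}$ and is still coprime with $k$, so the hypotheses of Lemma~\ref{lem:Chowla} are satisfied; applying it gives a subset $Z'\subseteq\{1,\ldots,k-1\}$ with $m\stackrel{k}{\equiv}\sum_{i\in Z'}x_i$. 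Now put $Z:=\{v_i : i\in Z'\}$. Because the $v_i$ are pairwise distinct, $\sum_{v\in Z}d_G(v)=\sum_{i\in Z'}d_G(v_i)\stackrel{k}{\equiv}\sum_{i\in Z'}x_i\stackrel{k}{\equiv}m\stackrel{k}{\equiv}|E(G)|$. Feeding this $Z$ together with the $(3k-3)$-edge-connectivity of $G$ into Theorem~\ref{thm:factorization:Z} completes the proof.

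I do not expect a genuine obstacle: the corollary is essentially a repackaging of Theorem~\ref{thm:factorization:Z} through Chowla's lemma. The only points needing a line of care are bookkeeping ones — that reducing a degree coprime with $k$ modulo $k$ keeps it in $Z_k\setminus\{0\}$ and coprime with $k$ (so that Lemma~\ref{lem:Chowla} genuinely applies), and that the selected vertices are taken pairwise distinct so that $Z$ has the intended degree sum. It is also worth recording where primality of $k$ is used: it is what makes ``degree coprime with $k$'' coincide with ``degree $\stackrel{k}{\not\equiv}0$'', which is the clean condition under which the number-theoretic selection step is transparent, and with $k$ prime one could equivalently state the hypothesis as requiring $k-1$ vertices of degree not divisible by $k$.
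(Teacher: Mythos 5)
Your proposal is correct and follows essentially the same route as the paper: select vertices with degree coprime with $k$, use Chowla's lemma (Lemma~\ref{lem:Chowla}) to extract a subset $Z$ with $|E(G)|\stackrel{k}{\equiv}\sum_{v\in Z}d_G(v)$, and then invoke Theorem~\ref{thm:factorization:Z}. Your extra bookkeeping (distinctness of the chosen vertices, residues landing in $Z_k\setminus\{0\}$) is fine, and your closing remark is consistent with the paper's own observation that primality is only needed to identify ``coprime with $k$'' with ``not divisible by $k$''.
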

\begin{proof}
{Let $Z_0$ be the set of all vertices of $G$ such that whose degrees are coprime with $k$.
Since $|Z_0|\ge k-1$,  there is a subset $Z$ of $Z_0$ such that 
$|E(G)|\stackrel{k}{\equiv}\sum_{v\in Z}d_G(v)$ according to Lemma~\ref{lem:Chowla}.
Hence the assertion follows from Theorem~\ref{thm:factorization:Z} immediately.
}\end{proof}
It is perhaps surprising that the lower bound of $k-1$ in the above-mentioned corollary is best possible according to the following observation. It remains to decide whether the statement of Corollary~\ref{cor:prime} holds
 if $G$ contains at least $k-1$ vertices whose degree are not divisible by $k$.
The statement is obviously true for all prime numbers $k$.
\begin{observ}
{For every integer $k$ with $k\ge 2$, there are infinitely many highly edge-connected graphs $G$ having at least $k-2$ 
vertices whose degrees are  coprime with $k$, while $G$  cannot be edge-decomposed into $k$ factors $G_1,\ldots, G_k$  such that for each $v\in V(G_i)$, $|d_{G_i}(v)-d_G(v)/k|< 1$.
}\end{observ}
\begin{proof}
{Let $r$ be an arbitrary odd positive integer. 
Choose a $kr$-edge-connected graph $G$ of odd order
 in which whose degrees are $rk$ except for $k-2$ vertices having degree $kr+1$
(for $k$ even, one can consider a $kr$-edge-connected  $kr$-regular
 large graph of odd order and insert a new perfect matching of size $k/2-1$ to it).
If $G$ has the desired factorization, then according to the vertex degree, 
there must be a factor $G_i$ such that for all vertices $v$, $d_{G_i}(v)=\lfloor d_G(v)/k\rfloor$.
This implies that $G$  an $r$-regular factor which is not possible, because of the handshaking lemma. 
Hence the assertion holds.
}\end{proof}
%
%
%
%
%
%
%
\section{Parity factorizations of odd-edge-connected graphs}
In this section, we are going to develop each of Theorems~\ref{thm:Thomassen:factorization} and~\ref{thm:Hilton} in two ways
 based on Theorem~\ref{thm:factorization:main:directed}. For this purpose, we first need the following lemma which improves the edge-connectivity needed in Lemma~\ref{lem:modulo:2k-2:3k-3} for a special case.
\begin{lem}{\rm (\cite{MR3096333})}\label{lem:balanced-modulok:odd}
{Let $k$ be an odd positive integer. 
If $G$ is an odd-$(3k-2)$-edge-connected graph, 
then it admits an orientation such that for each vertex $v$, $d_G^+(v)\stackrel{k}{\equiv}d_G^-(v)$.
}\end{lem}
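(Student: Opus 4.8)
The plan is to restate the conclusion as the existence of a $p$-orientation modulo $k$ and then to run a Thomassen-style induction that removes small even edge cuts. Since $k$ is odd, $2$ is invertible modulo $k$, with inverse $(k+1)/2$, so the requirement $d_G^+(v)\equiv d_G^-(v)\pmod k$ is equivalent to $d_G^+(v)\equiv \frac{k+1}{2}\,d_G(v)\pmod k$ for every vertex $v$. Put $p(v):=\frac{k+1}{2}\,d_G(v)\bmod k$; the global compatibility hypothesis of Lemma~\ref{lem:modulo:2k-2:3k-3} holds automatically, since $2\sum_{v}p(v)\equiv\sum_{v}d_G(v)=2|E(G)|\pmod k$ and $2$ is invertible, whence $\sum_{v}p(v)\equiv|E(G)|\pmod k$. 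Thus Lemma~\ref{lem:modulo:2k-2:3k-3} already produces the orientation we want whenever $G$ is $(3k-3)$-edge-connected; and because $3k-3$ is even for odd $k$, every odd edge cut of a $(3k-3)$-edge-connected graph has size at least $3k-2$, so the present statement genuinely extends that case. The only new configurations to handle are \emph{small even edge cuts}, in particular even-degree vertices of degree less than $3k-2$.

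I would argue by induction on $|V(G)|+|E(G)|$, having first deleted all loops (they change neither $d^+-d^-$ nor any cut) and reduced to the connected case. A vertex of odd degree spans an odd one-element cut, so has degree at least $3k-2$; hence any vertex of small degree has even degree. If a vertex $w$ has degree $2$, I suppress it, i.e.\ replace its two incident edges by one edge between their other ends and delete a resulting loop; every odd cut of the new graph comes from an odd cut of $G$ of at least the same size, so odd-$(3k-2)$-edge-connectivity survives, and a balanced orientation of the new graph lifts back by routing one direction through $w$, leaving every residue fixed. More generally, if $w$ has even degree with $4\le d_G(w)\le 3k-4$, I split off at $w$: lifting a pair $\{wu,wv\}$ into a single edge $uv$ decreases, by exactly $2$, only those cuts $\delta(Y)$ that have $u,v$ on one side and $w$ on the other, and leaves all other cuts unchanged, so such a pair is admissible unless some odd cut of size exactly $3k-2$ separates $w$ from $\{u,v\}$; a parity-sensitive analogue of Mader's lifting theorem — obtained by uncrossing these blocking odd cuts via the submodularity of $d_G(\cdot)$ — shows that in this degree window not every pair at $w$ can be blocked. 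A balanced orientation again lifts back edge by edge ($uv$ directed $u\to v$ becomes $u\to w$ together with $w\to v$), keeping every residue fixed and making $w$ balanced. Each such move strictly decreases $|V(G)|+|E(G)|$, so we may assume that every vertex has degree at least $3k-3$.

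If the resulting graph is still not $(3k-3)$-edge-connected, then, odd cuts being $\ge 3k-2$ and one-element cuts being $\ge 3k-3$, it has a nontrivial even cut $\delta(X)$ with $|X|\ge 2$, $|V(G)\setminus X|\ge 2$ and $|\delta(X)|=2t\le 3k-4$. Contract $X$ to a single vertex $x_1$, and, separately, $V(G)\setminus X$ to a single vertex $x_2$; since every cut of a contraction is a cut of $G$, both graphs are odd-$(3k-2)$-edge-connected, and each has smaller $|V|+|E|$. Solving the $x_2$-contraction by induction orients $G[X]$ and the $2t$ cut edges; the balance conditions at $x_2$ and inside $X$ force the number of cut edges leaving $X$ to be congruent to $t$ modulo $k$, so the orientation it induces on the edges at $x_1$ is itself balanced modulo $k$. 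I then solve the $x_1$-contraction by a \emph{rooted} form of the induction hypothesis, in which the orientation of all edges of one distinguished cut may be prescribed in advance — here taking that prescription to be exactly what has just been read off around $x_1$ — and glue the two orientations along the $2t$ cut edges. When no such cut remains, $G$ is $(3k-3)$-edge-connected and Lemma~\ref{lem:modulo:2k-2:3k-3} applied to $p$ finishes the induction.

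The step I expect to be hardest is the splitting-off lemma: proving that an admissible pair at $w$ always exists when $4\le d_G(w)\le 3k-4$ is exactly where the threshold $3k-2$ and the parity of edge cuts enter, and it requires the submodular uncrossing to be carried out over odd cuts rather than over all cuts. A secondary point, to be settled before the induction begins, is the precise rooted strengthening of the statement — which edges may be prescribed, and how the distinguished cut is carried through successive contractions — so that the decomposition step really reduces to instances of the same statement; this is routine but needs care.
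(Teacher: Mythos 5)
The paper itself does not prove this lemma: it is quoted from Lov\'asz--Thomassen--Wu--Zhang \cite{MR3096333}, so there is no internal proof to compare against, and your proposal must stand on its own. Its opening reductions are correct: since $k$ is odd, the balance condition is the $p$-orientation condition with $p(v)=\frac{k+1}{2}d_G(v)\bmod k$, the compatibility $\sum_v p(v)\equiv|E(G)|\pmod k$ holds automatically, Lemma~\ref{lem:modulo:2k-2:3k-3} settles the $(3k-3)$-edge-connected case, and the parity bookkeeping in the contraction step (that $2d^+(x_2)\equiv 2t\pmod k$ forces the induced orientation at $x_1$ to be balanced) is fine.

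However, the two steps that carry the entire weight of the theorem are only asserted, not proved. First, the ``parity-sensitive analogue of Mader's lifting theorem'': you need that at every even-degree vertex $w$ with $4\le d_G(w)\le 3k-4$ some pair is admissible, i.e.\ avoids every tight odd cut of size exactly $3k-2$ separating that pair from $w$, and that admissible pairs persist through the successive splittings needed to eliminate $w$ entirely. The uncrossing here is not the standard Mader one; it must be run over odd cuts only, and the key inequality it would rest on (for a tight odd cut $Y$ with $w\notin Y$, the set $Y\cup\{w\}$ is again an odd cut, forcing $d_G(w,Y)\le d_G(w)/2$) is nowhere exploited or even stated. You flag this yourself as the hardest step, which is an admission that it is missing. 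Second, and more structurally damaging: the gluing step invokes a \emph{rooted} form of the lemma --- a balanced modulo-$k$ orientation extending a prescribed orientation of all edges at a distinguished vertex --- which is strictly stronger than the statement you are inducting on, so the induction as written does not close. Once the inductive statement is upgraded to the rooted one, every reduction (suppression, splitting-off, contraction) must be re-verified while carrying the root and its pre-oriented edges, and the admissible degree thresholds change accordingly. That calibration --- exactly which boundary data may be prescribed, at what degree of the root, under what connectivity of the rest --- is precisely the content of the proof in \cite{MR3096333}; your outline reproduces its scaffolding but leaves its substance unproved.
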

The following theorem generalizes Theorem~\ref{thm:Hilton} to non-Eulerian graphs for the case that $k$ is odd.
This result can also be considered as an improvement of a result due to Shu, Zhang, and Zhang (2012) \cite{Shu-Zhang-Zhang}
who proved this result  for odd-$(2k-1)$-edge-connected graphs without considering the restriction on vertex degrees.
\begin{thm}\label{thm:parity-factorizations:odd-k}
{Let $k$ be an odd positive integer.
If $G$ is an odd-$(3k-2)$-edge-connected or $(2k-2)$-tree-connected graph, then it can be edge-decomposed into $k$ factors $G_1,\ldots, G_k$ such that for each $v\in V(G_i)$, 
 $ d_{G_i}(v)\stackrel{2}{\equiv} d_G(v)$
and
 $$|d_{G_i}(v)-d_{G}(v)/k|<2.$$
}\end{thm}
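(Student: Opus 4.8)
The plan is to mimic the strategy used for Theorem~\ref{thm:factorization:Z}: first produce an orientation with controlled congruences via the appropriate modulo-orientation lemma, then apply Theorem~\ref{thm:factorization:main:directed} with its parity-preserving option (ii). Concretely, when $G$ is odd-$(3k-2)$-edge-connected I would invoke Lemma~\ref{lem:balanced-modulok:odd} to obtain an orientation in which $d_G^+(v)\stackrel{k}{\equiv}d_G^-(v)$ for every vertex $v$. This is exactly the hypothesis needed to trigger condition (ii) of Theorem~\ref{thm:factorization:main:directed} at \emph{every} vertex simultaneously, so the decomposition $G_1,\ldots,G_k$ it produces satisfies $d_{G_i}(v)\stackrel{2}{\equiv}(d_G^+(v)-d_G^-(v))/k$ for all $v$ and all $i$.

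The next step is to check that this parity is the right one, namely that $(d_G^+(v)-d_G^-(v))/k\stackrel{2}{\equiv}d_G(v)$. Since $d_G(v)=d_G^+(v)+d_G^-(v)$ and $k$ is odd, we have $(d_G^+(v)-d_G^-(v))/k\equiv (d_G^+(v)-d_G^-(v))\equiv (d_G^+(v)+d_G^-(v))=d_G(v)\pmod 2$, using that $2d_G^-(v)$ is even and that dividing by the odd number $k$ does not change parity. Hence $d_{G_i}(v)\stackrel{2}{\equiv}d_G(v)$ as required. For the degree estimate, Theorem~\ref{thm:factorization:main:directed} already guarantees $|d_{G_i}^\pm(v)-d_G^\pm(v)/k|<1$, so $|d_{G_i}(v)-d_G(v)/k|\le |d_{G_i}^+(v)-d_G^+(v)/k|+|d_{G_i}^-(v)-d_G^-(v)/k|<2$, which is the claimed bound.

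For the $(2k-2)$-tree-connected case I would instead use Lemma~\ref{lem:modulo:2k-2:3k-3}: choosing $p(v)\in Z_k$ with $2p(v)\stackrel{k}{\equiv}d_G(v)$ for each vertex (possible and unique since $k$ is odd, so $2$ is invertible mod $k$), and noting $\sum_v p(v)\stackrel{k}{\equiv}\sum_v (k+1)/2\cdot d_G(v)\stackrel{k}{\equiv}(k+1)\,|E(G)|\stackrel{k}{\equiv}|E(G)|$, the lemma yields a $p$-orientation. Then $d_G^+(v)\equiv p(v)$ and $d_G^-(v)=d_G(v)-d_G^+(v)\equiv 2p(v)-p(v)=p(v)\pmod k$, so again $d_G^+(v)\stackrel{k}{\equiv}d_G^-(v)$ everywhere, and the rest of the argument proceeds verbatim.

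The main obstacle, and the only place requiring care, is verifying that the global hypothesis ``$d_G^+(u)\stackrel{k}{\equiv}d_G^-(u)$ for \emph{every} $u$'' legitimately activates option (ii) of Theorem~\ref{thm:factorization:main:directed} uniformly — one must confirm that the construction in that theorem's proof (splitting $u^+$ and $u^-$ down to residual vertices of equal degree $<k$ and joining them by artificial edges) can be carried out at all vertices at once without the splittings interfering, and that the single possible ``leftover'' vertex of degree $<k$ in $G_0$ causes no conflict. Since that theorem explicitly permits imposing condition (ii), this reduces to a citation rather than new work; the parity bookkeeping above is then routine.
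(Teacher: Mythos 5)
Your proposal is correct and follows essentially the same route as the paper: obtain an orientation with $d_G^+(v)\stackrel{k}{\equiv}d_G^-(v)$ at every vertex via Lemmas~\ref{lem:modulo:2k-2:3k-3} and~\ref{lem:balanced-modulok:odd}, then invoke Theorem~\ref{thm:factorization:main:directed}~(ii) and combine the two one-sided bounds by the triangle inequality. The only cosmetic difference is that you verify the parity $d_{G_i}(v)\stackrel{2}{\equiv}d_G(v)$ by computing $(d_G^+(v)-d_G^-(v))/k$ modulo $2$ directly (and you spell out the choice of $p$ in the tree-connected case), whereas the paper deduces it from the fact that all $k$ factor degrees share a parity and sum to $d_G(v)$ with $k$ odd; both are valid.
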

\begin{proof}
{According to Lemmas~\ref{lem:modulo:2k-2:3k-3} and~\ref{lem:balanced-modulok:odd}, 
the edge-connectivity condition implies the graph $G$ admits an orientation such that
for each vertex $v$, $d_G^+(v)\stackrel{k}{\equiv}d_G^-(v)$.
By Theorem~\ref{thm:factorization:main:directed} (ii), the graph $G$ can be edge-decomposed into $k$ factors 
 $G_1,\ldots, G_k$ such that  for each $v\in V(G_i)$,
$| d^-_{G_i}(v)- d^-_G(v)/k|< 1$ and $| d^+_{G_i}(v)-d^+_G(v)/k|< 1$, and also
$d_{G_i}(v)\stackrel{2}{\equiv}(d_G^+(v)-d_G^-(v))/k$.
Therefore, $|d_{G_i}(v)-d_{G}(v)/k|\le |d^+_{G_i}(v)-d^+_{G}(v)/k|+|d^-_{G_i}(v)-d^-_{G}(v)/k|<2$,
and all $d_{G_i}(v)$ have the same parity for which $1\le i\le k$.
Since $d_G(v)=\sum_{1\le j\le k}d_{G_j}(v)$ and $k$ is odd, $d_{G_i}(v)$ and $d_{G}(v)$ must have the same parity. Hence the theorem holds.
}\end{proof}
The edge-connectivity needed in Lemma~\ref{lem:modulo:2k-2:3k-3} can also be improved for the following special case.
We are going to apply it to deduce the next result on Eulerian graphs.
\begin{lem}{\rm (\cite{ModuloBounded})}\label{lem:balanced-modulok:even}
{Let $k$ be an even positive integer, let $G$ be an Eulerian graph, and let $Q\subseteq V(G)$ with $|Q|$ even.
If $d_G(X)\ge 3k-2$ for every $X\subseteq V(G)$ with $|X\cap Q|$ odd,
then $G$ admits an orientation such that for each $v\in V(G)\setminus Q$, $d_G^+(v)=d_G^-(v)$, and 
for each $v\in Q$, $|d_G^+(v)-d_G^-(v)|=k$.
}\end{lem}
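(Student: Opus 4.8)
The plan is to recast the conclusion as a \emph{bounded modulo orientation} and then realize it in two stages: first secure the correct out-degree residues modulo $k$, and then trim the out-degrees to their exact target values. Since $G$ is Eulerian, every $d_G(v)$ is even and every cut $d_G(X)$ is even, so the cuts that are \emph{allowed} to be small (below $3k-2$) are exactly those with $|X\cap Q|$ even. Define $p:V(G)\to Z_k$ by $p(v)\equiv d_G(v)/2 \pmod{k}$ for $v\notin Q$ and $p(v)\equiv d_G(v)/2+k/2 \pmod{k}$ for $v\in Q$; both are well defined because $d_G(v)$ and $k$ are even. A direct computation gives $\sum_{v}p(v)=|E(G)|+\tfrac{k}{2}|Q|\equiv|E(G)| \pmod{k}$, using that $|Q|$ is even. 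An orientation with $d^+_G(v)\equiv p(v)\pmod{k}$ and $|d^+_G(v)-d^-_G(v)|\le k$ is exactly what is wanted: off $Q$ the length-$k$ interval $[\,d_G(v)/2-k/2,\ d_G(v)/2+k/2\,]$ meets the prescribed residue only at its centre $d_G(v)/2$, forcing $d^+_G(v)=d^-_G(v)$; on $Q$ it meets the prescribed residue only at its two endpoints, each at distance $k/2$ from the centre, forcing $|d^+_G(v)-d^-_G(v)|=k$.

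The heart of the argument is the existence of a $p$-orientation modulo $k$. Note that Lemma~\ref{lem:modulo:2k-2:3k-3} cannot be invoked directly, since our hypothesis only forces $d_G(X)\ge 3k-2$ on the cuts with $|X\cap Q|$ odd, which is weaker than $(3k-3)$-edge-connectivity, and the ``furthermore'' clause does not apply because here $p(v)$ is neither $0$ nor $d_G(v)$. I would therefore mirror the inductive proof of that lemma, and of its odd-$k$ companion Lemma~\ref{lem:balanced-modulok:odd}, running an induction on $|E(G)|$ that repeatedly contracts the smaller side $X$ of a tight cut. The Eulerian structure keeps the bookkeeping closed: because every $d_G(X)$ is even, a deficient cut automatically has $|X\cap Q|$ even, so contracting $X$ into a single vertex $x$ absorbs an even part of $Q$, keeps $|Q|$ and every surviving value $|Y\cap Q|$ of the right parity, and keeps the graph Eulerian. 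Setting the residue of the new vertex to $p(x)\equiv\sum_{v\in X}p(v)-|E(G[X])| \pmod{k}$ preserves the congruence $\sum p\equiv|E|$, and recursing on both contracted graphs then lifts to the desired orientation. Verifying that these contractions are always available and that the hypotheses transfer under the \emph{odd-cut-only} bound $3k-2$ is the main obstacle, and is precisely where the constant $3k-2$ (rather than anything smaller) is consumed.

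Once a $p$-orientation is in hand, set $\sigma(v)=d^+_G(v)-d^-_G(v)$. Doubling the residue condition gives $\sigma(v)\equiv 0 \pmod{2k}$ for $v\notin Q$ and $\sigma(v)\equiv k \pmod{2k}$ for $v\in Q$, so the admissible values are $0$ off $Q$ and $\pm k$ on $Q$, while any violation differs from an admissible value by at least $2k$. I would argue by the monovariant $\sum_v\sigma(v)^2$, whose minimum over all $p$-orientations is attained exactly at the target configuration. If a minimizer still had a vertex $u$ with $\sigma(u)$ beyond its admissible value, then the set $X$ of vertices reachable from $u$ along directed arcs emits no arc, whence $\sum_{v\in X}\sigma(v)=-d_G(X)\le 0$; a reachability/uncrossing argument of the type used in the cited lemmas then produces a vertex $w$ below its admissible value together with $k$ edge-disjoint directed $u$--$w$ paths. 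Reversing these paths lowers $\sigma(u)$ and raises $\sigma(w)$ by $2k$ each, changes every out-degree by a multiple of $k$ (hence preserves all residues modulo $k$), and strictly decreases the monovariant, a contradiction. The connectivity hypothesis, via Menger's theorem, is what supplies the required edge-disjoint paths.

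Two remarks on the write-up. The two stages can be fused into a single induction producing the residues and the bound $|d^+_G(v)-d^-_G(v)|\le k$ simultaneously, which is likely the most economical route; I have separated them here only for transparency. Moreover, the whole scheme is the even-$k$, $Q$-parity analogue of Lemma~\ref{lem:balanced-modulok:odd}, and I expect its proof to follow the same contraction template with ``$d_G(X)$ odd'' replaced everywhere by ``$|X\cap Q|$ odd''.
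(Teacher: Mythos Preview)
The paper does not prove this lemma; it is quoted verbatim from \cite{ModuloBounded} and used as a black box. So there is no ``paper's own proof'' to compare against, and the only question is whether your sketch stands on its own.

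Your reduction is correct and clean: defining $p(v)\equiv d_G(v)/2+\tfrac{k}{2}\cdot[v\in Q]\pmod k$, checking $\sum_v p(v)\equiv|E(G)|\pmod k$ via $|Q|$ even, and observing that a $p$-orientation with $|d^+_G(v)-d^-_G(v)|\le k$ pins $\sigma(v)$ to $0$ off $Q$ and to $\pm k$ on $Q$ is exactly the right translation. This is how the result is set up in the cited source.

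The gap is that you do not actually prove the existence of this bounded $p$-orientation; you only announce that an induction ``of the type used in the cited lemmas'' should go through. Two concrete points deserve caution. First, in your second stage you want $k$ edge-disjoint directed $u$--$w$ paths in order to shift $\sigma$ by $2k$; but your hypothesis lower-bounds only those cuts with $|X\cap Q|$ odd, so a $u$--$w$ separating set $X$ with $|X\cap Q|$ even may have $d_G(X)$ arbitrarily small, and Menger gives you nothing. The reachability argument you sketch (``$\sum_{v\in X}\sigma(v)=-d_G(X)\le0$, hence some $w$ is below admissible'') does not by itself produce such a $w$: vertices of $Q$ in $X$ may sit at their admissible value $-k$ and already drive the sum negative. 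Second, the contraction step in your first stage is described only for cuts with $|X\cap Q|$ even; you do not explain what plays the role of a ``tight'' cut here, nor why the base case terminates. These are precisely the places where the constant $3k-2$ must be spent, and your write-up acknowledges this but does not discharge it.

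In short: the framework and the target reformulation are right, and match what the cited reference does; but what you have is a plan, not a proof. The fused single induction you mention in your closing remark is indeed how the argument is carried out in \cite{ModuloBounded}, and you would need to execute it rather than invoke it.
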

The next theorem strengthens Theorem~\ref{thm:Hilton} by imposing a new parity restriction on degrees of the desire factors for graphs with higher edge-connectivity.
\begin{thm}\label{thm:parity-factorizations:even-k}
{Let $k$ be an even positive integer, let $G$ be an Eulerian graph, and let $f:V(G)\rightarrow Z_2$ be a mapping with
$ \sum_{v\in V(G)}f(v)$ even.
If $d_G(X)\ge 3k-2$ for every vertex set $X$ 
with $ \sum_{v\in X}f(v)$ odd, or $G$ is $(2k-2)$-tree-connected, then $G$ can be edge-decomposed into $f$-parity factors $G_1,\ldots, G_k$ such that for each $v\in V(G_i)$, 
 $$|d_{G_i}(v)-d_{G}(v)/k|<2.$$
}\end{thm}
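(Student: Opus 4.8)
The plan is to reduce this to the directed version, Theorem~\ref{thm:factorization:main:directed}(ii), exactly in the style of the proof of Theorem~\ref{thm:parity-factorizations:odd-k}, but now using the even-$k$ orientation lemma. First I would set $Q$ to be the set of vertices $v$ with $f(v)=1$; since $\sum_{v\in V(G)}f(v)$ is even, $|Q|$ is even. In the first case, the hypothesis that $d_G(X)\ge 3k-2$ for every $X$ with $\sum_{v\in X}f(v)$ odd is precisely the hypothesis that $d_G(X)\ge 3k-2$ whenever $|X\cap Q|$ is odd, so Lemma~\ref{lem:balanced-modulok:even} applies directly and gives an orientation of $G$ with $d_G^+(v)=d_G^-(v)$ for $v\notin Q$ and $|d_G^+(v)-d_G^-(v)|=k$ for $v\in Q$. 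In the second case, $G$ is $(2k-2)$-tree-connected; here I would invoke Lemma~\ref{lem:modulo:2k-2:3k-3} with the mapping $p(v)\equiv (d_G(v)/2)+f(v)(k/2)\pmod k$ (note $d_G(v)$ is even since $G$ is Eulerian, and $k/2$ is an integer), checking that $\sum_v p(v)\equiv |E(G)|\pmod k$ because $\sum_v d_G(v)/2=|E(G)|$ and $\sum_v f(v)(k/2)\equiv 0\pmod k$ as $|Q|$ is even; the resulting $p$-orientation has $d_G^+(v)-d_G^-(v)=2d_G^+(v)-d_G(v)\equiv 2f(v)(k/2)\equiv f(v)k\pmod{2k}$, i.e. $(d_G^+(v)-d_G^-(v))/k$ is odd exactly when $f(v)=1$.

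Once such an orientation is in hand, the key observation is that for every vertex $v$ we have $d_G^+(v)\stackrel{k}{\equiv}d_G^-(v)$ (trivially when $v\notin Q$, and because the difference is $\pm k$ or $f(v)k$ when $v\in Q$), so the hypothesis of Theorem~\ref{thm:factorization:main:directed}(ii) is met at every vertex. Applying that theorem, I get a decomposition $G=G_1\cup\cdots\cup G_k$ with $|d^+_{G_i}(v)-d^+_G(v)/k|<1$ and $|d^-_{G_i}(v)-d^-_G(v)/k|<1$ for each $v\in V(G_i)$, and moreover $d_{G_i}(v)\stackrel{2}{\equiv}(d_G^+(v)-d_G^-(v))/k$. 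The degree bound follows by the triangle inequality: $|d_{G_i}(v)-d_G(v)/k|\le |d^+_{G_i}(v)-d^+_G(v)/k|+|d^-_{G_i}(v)-d^-_G(v)/k|<2$. For the parity statement: $(d_G^+(v)-d_G^-(v))/k$ is $0$ when $f(v)=0$ and odd (namely $\pm 1$) when $f(v)=1$, so in both cases $d_{G_i}(v)\equiv f(v)\pmod 2$, which is exactly the assertion that $G_i$ is an $f$-parity factor.

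The main thing to get right — and where I would be most careful — is the bookkeeping for the $(2k-2)$-tree-connected case: verifying that the mapping $p$ is well-defined modulo $k$, that it satisfies the global congruence $\sum_v p(v)\equiv|E(G)|\pmod k$ using the evenness of $|Q|$, and that the resulting modulo-$k$ orientation genuinely forces $(d_G^+(v)-d_G^-(v))/k$ to have the prescribed parity at every vertex (this uses that $d_G^+(v)-d_G^-(v)$ is an even number whose value mod $2k$ is pinned down, not merely its value mod $k$). The first case is essentially immediate from Lemma~\ref{lem:balanced-modulok:even}, and the passage through Theorem~\ref{thm:factorization:main:directed}(ii) is routine once the orientation exists, so no serious obstacle is expected beyond this congruence arithmetic.
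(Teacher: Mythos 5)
Your proposal is correct and follows essentially the same route as the paper: obtain an orientation with $d_G^+(v)\stackrel{k}{\equiv}d_G^-(v)$ whose imbalance $(d_G^+(v)-d_G^-(v))/k$ has the parity of $f(v)$ (via Lemma~\ref{lem:balanced-modulok:even} in the odd-edge-connectivity case and Lemma~\ref{lem:modulo:2k-2:3k-3} in the tree-connected case), then feed it to Theorem~\ref{thm:factorization:main:directed}(ii). Your explicit choice $p(v)=d_G(v)/2+f(v)k/2$ and the mod-$2k$ bookkeeping for the tree-connected case is a detail the paper leaves implicit, and your verification is sound.
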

\begin{proof}
{According to Lemmas~\ref{lem:modulo:2k-2:3k-3} and~\ref{lem:balanced-modulok:even}, 
the edge-connectivity condition implies the graph $G$ admits an orientation
 such that for each vertex $v$, $d_G^+(v)=d_G^-(v)$ when $f(v)$ is even, and $|d_G^+(v) -d_G^-(v)|=k$ when $f(v)$ is odd.
Thus by Theorem~\ref{thm:factorization:main:directed} (ii), the graph $G$ can be edge-decomposed into $k$ factors 
 $G_1,\ldots, G_k$ such that  for each $v\in V(G_i)$,
$| d^-_{G_i}(v)- d^-_G(v)/k|< 1$ and $| d^+_{G_i}(v)-d^+_G(v)/k|< 1$, and also
$d_{G_i}(v)\stackrel{2}{\equiv}(d_G^+(v)-d_G^-(v))/k$.
Therefore, $|d_{G_i}(v)-d_{G}(v)/k|\le |d^+_{G_i}(v)-d^+_{G}(v)/k|+|d^-_{G_i}(v)-d^-_{G}(v)/k|<2$
and $d_{G_i}(v)\stackrel{2}{\equiv}f(v)$.
Hence the theorem holds.
}\end{proof}
An attractive application of Theorems~\ref{thm:parity-factorizations:odd-k} and~\ref{thm:parity-factorizations:even-k} is given in the following corollary.
\begin{cor}
{Let $k$ and $r$ be two positive integers. 
Let $G$ be a graph whose degrees lie in the set $\{rk, rk+2,...,rk+2k\}$ for which $r|V(G)|$ is even.
If $G$ is $(3k-3)$-edge-connected or $(2k-2)$-tree-connected, then it can be edge-decomposed into $k$ $\{r,r+2\}$-factors.
}\end{cor}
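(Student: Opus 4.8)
The plan is to reduce to the parity-factorization theorems of this section by a case split on the parity of $k$, and then read off the admissible degree values from the approximation bound together with the prescribed parity. Write $d_G(v)=rk+2t_v$ with $0\le t_v\le k$, so that $r\le d_G(v)/k\le r+2$ for every vertex $v$. Suppose first that $k$ is odd. Then $3k-3$ is even, so a $(3k-3)$-edge-connected graph is odd-$(3k-2)$-edge-connected, and hence under either hypothesis the conditions of Theorem~\ref{thm:parity-factorizations:odd-k} are met. Applying it yields an edge-decomposition $G_1,\dots,G_k$ with $d_{G_i}(v)\equiv d_G(v)\pmod 2$ and $|d_{G_i}(v)-d_G(v)/k|<2$ for each $v$. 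The strict bound forces $d_{G_i}(v)$ into the open interval $(r-2,r+4)$, that is, $d_{G_i}(v)\in\{r-1,r,r+1,r+2,r+3\}$; since $d_G(v)=rk+2t_v\equiv r\pmod 2$ (because $k$ is odd), the parity condition $d_{G_i}(v)\equiv d_G(v)\pmod 2$ eliminates $r-1$, $r+1$, and $r+3$, leaving $d_{G_i}(v)\in\{r,r+2\}$. Since $r\ge 1$, every $G_i$ is spanning, hence an $\{r,r+2\}$-factor.

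Suppose next that $k$ is even. Then every degree $rk+2t_v$ is even, so $G$ is Eulerian, which is the standing hypothesis of Theorem~\ref{thm:parity-factorizations:even-k}. Put $f\colon V(G)\to Z_2$ with $f(v)\equiv r\pmod 2$ for all $v$. Then $\sum_{v\in V(G)}f(v)$ equals $0$ when $r$ is even and equals $|V(G)|$ when $r$ is odd; in the latter case the hypothesis that $r|V(G)|$ is even forces $|V(G)|$ to be even, so $\sum_{v}f(v)$ is even in every case. For the remaining hypothesis: if $G$ is $(2k-2)$-tree-connected there is nothing to check, while if $G$ is $(3k-3)$-edge-connected then, $G$ being Eulerian, every edge cut has even size and therefore, being at least $3k-3$, it is in fact at least $3k-2$ (as $k$ is even). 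Theorem~\ref{thm:parity-factorizations:even-k} now provides $f$-parity factors $G_1,\dots,G_k$ with $|d_{G_i}(v)-d_G(v)/k|<2$ and $d_{G_i}(v)\equiv f(v)\equiv r\pmod 2$, and exactly the same arithmetic as above pins $d_{G_i}(v)$ down to $\{r,r+2\}$; since $r\ge 1$ these are spanning $\{r,r+2\}$-factors.

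I do not expect a genuine obstacle, since all the substance sits in Theorems~\ref{thm:parity-factorizations:odd-k} and~\ref{thm:parity-factorizations:even-k}; the remaining work is bookkeeping. The points that need care are: that $(3k-3)$-edge-connectivity upgrades to odd-$(3k-2)$-edge-connectivity when $k$ is odd and to the $3k-2$ cut bound when $k$ is even (the latter using that an Eulerian graph has only even cuts); that the parity-sum requirement on $f$ is exactly what "$r|V(G)|$ even" supplies; and that combining the strict inequality $|d_{G_i}(v)-d_G(v)/k|<2$ with the fixed parity of $d_{G_i}(v)$ pins the degree to one of the two admissible values---which works precisely because $d_G(v)/k$ ranges over $[r,r+2]$ and the one extra unit of slack on each side is absorbed by the parity constraint.
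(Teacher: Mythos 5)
Your proposal is correct and follows essentially the same route as the paper: split on the parity of $k$, invoke Theorem~\ref{thm:parity-factorizations:odd-k} (after upgrading $(3k-3)$-edge-connectivity to odd-$(3k-2)$-edge-connectivity) or Theorem~\ref{thm:parity-factorizations:even-k} with $f\equiv r \pmod 2$, and combine the bound $|d_{G_i}(v)-d_G(v)/k|<2$ with the parity constraint to pin the degrees to $\{r,r+2\}$. The paper's proof is terser but identical in substance; your extra checks (the parity of $\sum_v f(v)$ via $r|V(G)|$ even, and the cut-size upgrade in the Eulerian case) are exactly the details the paper leaves implicit.
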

\begin{proof}
{If $k$ is odd, then by Theorem~\ref{thm:parity-factorizations:odd-k}, the graph $G$ can be edge-decomposed into $k$ factors $G_1,\ldots, G_k$ such that for each $v\in V(G_i)$, 
 $ d_{G_i}(v)\stackrel{2}{\equiv} d_G(v)\stackrel{2}{\equiv}r$ and 
 $r-2\le d_{G}(v)/k -2<d_{G_i}(v)<d_{G}(v)/k+2\le r+4.$
If $k$ is even, then by applying Theorem~\ref{thm:parity-factorizations:even-k} with $f(v)=r$, these factors can similarly be found
such that for each $v\in V(G_i)$, 
 $ d_{G_i}(v)\stackrel{2}{\equiv} r$ and 
 $r-2<d_{G_i}(v)< r+4.$
This completes the proof.
}\end{proof}
%
%
%
%
%
%
%
%
%
%
%
\subsection{Graphs with degrees divisible by $k$: regular factorizations}
In this subsection, we restrict out attention to graphs with degrees divisible by $k$ and derive some results based on the following reformulation of Theorems~\ref{thm:parity-factorizations:odd-k} and~\ref{thm:parity-factorizations:even-k} on this family of graphs.
\begin{thm}\label{thm:divisiblebyk}
{Let $k$ be a positive integer and let $G$ be a graph with size and degrees divisible by $k$.
Take $Q$ to be the set of all vertices $v$ with $d_G(v)/k$ odd. 
If for every vertex X with $|X\cap Q|$ odd,
$$d_G(X)\ge 3k-2,$$
then $G$ can be edge-decomposed into $k$ factors $G_1,\ldots, G_k$ such that for each $v\in V(G_i)$, 
 $d_{G_i}(v)=d_G(v)/k$.
}\end{thm}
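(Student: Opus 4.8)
The plan is to split on the parity of $k$ and reduce to Theorems~\ref{thm:parity-factorizations:odd-k} and~\ref{thm:parity-factorizations:even-k}; the whole point is that in both cases the conclusions of those theorems give factors with $|d_{G_i}(v)-d_G(v)/k|<2$ \emph{together with} a parity statement on $d_{G_i}(v)$, and since $d_G(v)/k$ is an integer, an even integer of absolute value less than $2$ must be $0$, which upgrades the strict inequality to the exact equality $d_{G_i}(v)=d_G(v)/k$. Before the case analysis I would record one observation: since $|E(G)|$ is divisible by $k$, the number $\sum_{v\in V(G)}d_G(v)/k=2|E(G)|/k$ is even, and as $d_G(v)/k$ is odd exactly when $v\in Q$, this forces $|Q|$ to be even.

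\emph{Case $k$ odd.} Then $d_G(v)=k\cdot(d_G(v)/k)$ has the same parity as $d_G(v)/k$, so $Q$ is precisely the set of odd-degree vertices of $G$; consequently, for every $X\subseteq V(G)$ the cut value $d_G(X)$ is odd if and only if $|X\cap Q|$ is odd. Hence the hypothesis of the theorem is literally the statement that $G$ is odd-$(3k-2)$-edge-connected, and Theorem~\ref{thm:parity-factorizations:odd-k} provides factors $G_1,\dots,G_k$ with $d_{G_i}(v)\stackrel{2}{\equiv}d_G(v)\stackrel{2}{\equiv}d_G(v)/k$ and $|d_{G_i}(v)-d_G(v)/k|<2$. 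The difference $d_{G_i}(v)-d_G(v)/k$ is therefore an even integer lying in $(-2,2)$, so it is $0$, as desired.

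\emph{Case $k$ even.} Now every vertex of $G$ has even degree, so $G$ satisfies the Eulerian hypothesis of Theorem~\ref{thm:parity-factorizations:even-k}. Let $f:V(G)\rightarrow Z_2$ be the indicator function of $Q$, so that $f(v)$ is odd exactly when $d_G(v)/k$ is odd; by the opening observation $\sum_{v\in V(G)}f(v)=|Q|$ is even, and "$\sum_{v\in X}f(v)$ odd" means the same as "$|X\cap Q|$ odd". The hypothesis thus permits an application of Theorem~\ref{thm:parity-factorizations:even-k}, which yields $f$-parity factors $G_1,\dots,G_k$ with $|d_{G_i}(v)-d_G(v)/k|<2$ and $d_{G_i}(v)\stackrel{2}{\equiv}f(v)\stackrel{2}{\equiv}d_G(v)/k$. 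Exactly as in the previous case the difference is an even integer with absolute value less than $2$, hence $d_{G_i}(v)=d_G(v)/k$.

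I do not expect a real obstacle here: the substance is carried by Theorems~\ref{thm:parity-factorizations:odd-k} and~\ref{thm:parity-factorizations:even-k}, and the only care needed is the bookkeeping — checking that the $Q$-phrased cut hypothesis coincides with odd-$(3k-2)$-edge-connectivity when $k$ is odd and with the $Q$-cut condition of Theorem~\ref{thm:parity-factorizations:even-k} when $k$ is even, that $|Q|$ is even (which is where $k\mid|E(G)|$ enters), and that in both cases the forced parity of $d_{G_i}(v)$ matches that of the target $d_G(v)/k$ so that the bound $<2$ collapses to equality.
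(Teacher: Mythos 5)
Your proof is correct, and it does the job by a route that is cosmetically different from, but ultimately equivalent to, the one the paper writes down. The paper's own proof is a direct two-liner: it uses Lemmas~\ref{lem:balanced-modulok:odd} and~\ref{lem:balanced-modulok:even} to orient $G$ so that every out-degree is divisible by $k$ (the divisibility of the degrees and of $|E(G)|$ makes the required residues and the parity of $|Q|$ work out), and then invokes Theorem~\ref{thm:factorization:main:directed}: since $d^{+}_G(v)$ and $d^{-}_G(v)$ are both divisible by $k$, the bounds $|d^{\pm}_{G_i}(v)-d^{\pm}_G(v)/k|<1$ between integers already force $d^{\pm}_{G_i}(v)=d^{\pm}_G(v)/k$, hence $d_{G_i}(v)=d_G(v)/k$ with no further work. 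You instead black-box Theorems~\ref{thm:parity-factorizations:odd-k} and~\ref{thm:parity-factorizations:even-k} and add the parity-pinning step (an even integer in $(-2,2)$ is zero). Since those two theorems are themselves proved from the same orientation lemmas together with Theorem~\ref{thm:factorization:main:directed}\,(ii), the two routes rest on identical machinery; indeed the paper introduces this statement as a ``reformulation'' of those two theorems, so your reduction is exactly the one the surrounding text gestures at, made explicit. Your bookkeeping is sound: for $k$ odd, $d_G(X)\stackrel{2}{\equiv}\sum_{v\in X}d_G(v)$ identifies the $Q$-cut hypothesis with odd-$(3k-2)$-edge-connectivity; for $k$ even, $k\mid|E(G)|$ gives $|Q|$ even as the $f$-hypothesis requires; and in both cases the parity delivered by the cited theorem matches that of the integer $d_G(v)/k$. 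What your route buys is economy (no new orientation argument); what the paper's direct route buys is that it never needs the $<2$ bound or the parity upgrade at all.
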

\begin{proof}
{To show a a directive proof, we first consider an orientation for $G$ such that out-degree of each vertex is divisible by $k$
using Lemmas~\ref{lem:balanced-modulok:odd} and~\ref{lem:balanced-modulok:even}.
Next, it is enough to apply Theorem~\ref{thm:factorization:main:directed} (i).
}\end{proof}
The following corollary partially confirms Conjecture 2 in~\cite{Thomassen} by giving a supplement for Theorem~\ref{thm:Thomassen:factorization}. 
\begin{cor}\label{cor:r-regular:factorization}
{Let $r$ be an odd positive integer. If $G$ is a $kr$-regular graph $G$ of even order satisfying $d_G(A)\ge 3k-2$, for every vertex set $A$ with $|A|$ odd, then it can be edge-decomposed into $r$-factors.
}\end{cor}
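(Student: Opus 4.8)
The plan is to obtain this as an immediate consequence of Theorem~\ref{thm:divisiblebyk}. First I would verify that $G$ meets the arithmetic hypotheses of that theorem. Since $G$ is $kr$-regular, every vertex degree equals $kr$, which is divisible by $k$; and since $|V(G)|$ is even, the size $|E(G)| = kr|V(G)|/2 = k\cdot\bigl(r|V(G)|/2\bigr)$ is divisible by $k$ as well. So $G$ is a graph with size and degrees divisible by $k$, as required.

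Next I would identify the set $Q$ that appears in Theorem~\ref{thm:divisiblebyk}, namely the set of all vertices $v$ with $d_G(v)/k$ odd. Here $d_G(v)/k = r$ for every vertex $v$, and $r$ is odd by hypothesis, so $Q = V(G)$. Consequently, for a vertex set $X$ the condition "$|X\cap Q|$ odd" reduces to "$|X|$ odd", and the assumed edge-connectivity bound $d_G(A)\ge 3k-2$ for every $A$ with $|A|$ odd is precisely the hypothesis $d_G(X)\ge 3k-2$ for every $X$ with $|X\cap Q|$ odd demanded by Theorem~\ref{thm:divisiblebyk}.

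Applying Theorem~\ref{thm:divisiblebyk} then produces an edge-decomposition of $G$ into $k$ factors $G_1,\ldots,G_k$ with $d_{G_i}(v) = d_G(v)/k = r$ for every $v\in V(G)$ and every $i$ with $1\le i\le k$. Thus each $G_i$ is an $r$-regular spanning subgraph, i.e.\ an $r$-factor, and $E(G_1),\ldots,E(G_k)$ partition $E(G)$, which is exactly the claimed decomposition into $r$-factors. (The parity condition that $r|V(G)|$ be even, necessary for $r$-factors to exist at all, is automatic here because $|V(G)|$ is even.)

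Since the argument is just an invocation of a theorem already proved earlier in the paper, there is no genuine obstacle. The only point deserving a remark is the observation that the oddness of $r$ forces $Q = V(G)$, so that the edge-connectivity hypothesis stated for odd vertex sets coincides with the one over all sets $X$ with $|X\cap Q|$ odd in Theorem~\ref{thm:divisiblebyk}.
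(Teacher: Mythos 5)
Your proposal is correct and follows exactly the paper's own route: both verify that the degrees and the size are divisible by $k$ (the latter because the order is even), observe that $d_G(v)/k=r$ is odd so the odd-set hypothesis matches that of Theorem~\ref{thm:divisiblebyk}, and then invoke that theorem. Your write-up merely spells out these checks in more detail than the paper does.
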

\begin{proof}
{Since $G$ has even order, its size must be divisible by $k$. Note also that for each vertex $v$, $d_G(v)/k$ is odd. Thus the assertion follows from Theorem~\ref{thm:divisiblebyk} immediately, 
}\end{proof}
The following corollary gives a supplement for Theorem 3 in \cite{Thomassen}.
\begin{cor}\label{cor:r1rm:factorizations}
{Let $r$ be a positive integer
and let $r_1,\ldots, r_m$ be $m$ positive integers satisfying
 $r=r_1+\cdots+r_m$ and $r_i\ge r/k-1\ge 2$ in which $k$ is positive divisor of $r$ with $r/k$ odd.
If $G$ is an $r$-regular graph of even order and for every vertex $X$ with $|X|$ odd,
$$d_G(X)\ge 3k-2,$$
then it can be edge-decomposed into factors $G_1,\ldots, G_m$ such that every graph $G_i$ is $r_i$-regular.
}\end{cor}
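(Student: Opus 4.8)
The plan is to apply Corollary~\ref{cor:r-regular:factorization} to split $G$ into $k$ factors each of which is $(r/k)$-regular, and then to recombine these factors in groups to build the desired $G_1,\ldots,G_m$. First I would invoke Corollary~\ref{cor:r-regular:factorization} with the given data: $G$ is $kr'$-regular where $r'=r/k$ is odd, $G$ has even order, and the hypothesis $d_G(X)\ge 3k-2$ for every $X$ with $|X|$ odd is exactly what that corollary needs. Hence $G$ decomposes into $k$ edge-disjoint $r'$-regular factors $H_1,\ldots,H_k$.

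Next I would set up the recombination. Since $\sum_i r_i=r=kr'$, the average block size is $r'$, and the constraint $r_i\ge r/k-1=r'-1$ together with $r_i\ge 2$ means each $r_i$ is either $r'-1$, $r'$, or larger. To assemble $G_i$ as an $r_i$-regular graph I would assign to each $G_i$ a set $S_i$ of some of the $H_j$'s, but because $r_i$ need not be a multiple of $r'$ this cannot be done by whole factors alone — some $H_j$ must be shared and split further. The cleaner route is to note $\sum_i r_i = k r'$ and to look for nonnegative integers $a_i$ with $\sum a_i r' + (\text{correction}) = $ the $r_i$; more precisely, write $r_i = r' + t_i$ where $\sum_i t_i = 0$ and each $t_i \ge -1$. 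I would then handle the factors with $t_i=-1$ and those with $t_i\ge 0$ by transferring $1$-regular pieces (perfect matchings) between blocks: each $H_j$ being $r'$-regular of even order contains a perfect matching (indeed decomposes appropriately), so one can move a perfect matching out of a block that has a surplus into a block with a deficit. Careful bookkeeping, using that the number of deficit blocks equals the total surplus counted with multiplicity, produces $m$ factors $G_1,\ldots,G_m$ with $G_i$ being $r_i$-regular.

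Alternatively, and perhaps more in the spirit of the paper, one can bypass the recombination entirely and give a direct orientation-based proof: orient $G$ so that every out-degree is divisible by $k$ (possible by Lemmas~\ref{lem:balanced-modulok:odd} and~\ref{lem:balanced-modulok:even}, as in the proof of Theorem~\ref{thm:divisiblebyk}, since $d_G(X)\ge 3k-2$ on odd cuts and $r/k$ is odd), then build an auxiliary bipartite graph as in the proof of Theorem~\ref{thm:factorization:main:directed} and color its edges with $m$ colors in proportions $r_1,\ldots,r_m$ rather than $k$ equal colors. One must check that with the divisibility of all out-degrees and in-degrees by $k$, the fractional targets $r_i/r \cdot d_G(v)$ are in fact integers (they equal $r_i r'/k \cdot$ something — here one uses $k \mid r$ and the regularity), so the color classes can be made exactly $r_i$-regular; de~Werra's equitable edge-coloring of bipartite graphs (Theorem~\ref{thm:Werra}), applied with unequal block sizes, or a direct Eulerian-splitting argument, delivers this.

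The main obstacle I anticipate is the bookkeeping in the recombination step: the $r_i$ are not multiples of $r'$, so one genuinely has to cut some of the $r'$-regular factors into a $(r'-1)$-regular part plus a perfect matching and then redistribute the matchings so that every $G_i$ ends up exactly $r_i$-regular — this requires tracking that the surpluses and deficits balance and that enough disjoint perfect matchings are available inside the relevant $H_j$'s (which holds because an $r'$-regular graph of even order with $r'$ odd has a proper $r'$-edge-coloring on suitable subgraphs, or at least a $1$-factorization of a perfect matching's worth). The condition $r_i\ge r'-1\ge 2$ is exactly what makes this redistribution feasible without any block dropping below a legal regularity, so the argument should go through once the combinatorial accounting is set up carefully.
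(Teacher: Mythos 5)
Your first step is exactly the paper's: the proof given there is simply ``apply Corollary~\ref{cor:r-regular:factorization} along with the argument of Theorem 3 in \cite{Thomassen}'', so obtaining the $k$ edge-disjoint $(r/k)$-regular factors $H_1,\dots,H_k$ (with $r'=r/k$ odd) is correct. The gap is in your recombination. First, the bookkeeping $r_i=r'+t_i$ with $\sum_i t_i=0$ tacitly assumes $m=k$; since the only constraints are $\sum_i r_i=kr'$ and $r_i\ge r'-1\ge 2$, the number of parts $m$ may be smaller or larger than $k$ (e.g.\ $r'=3$, $k=3$, $(r_i)=(2,2,2,3)$), so the surplus/deficit accounting does not even get started. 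Second, and more seriously, your transfer mechanism requires extracting a perfect matching from an individual $r'$-regular factor $H_j$ so as to leave an $(r'-1)$-regular remainder. An $r'$-regular graph of even order with $r'$ odd need not contain a perfect matching at all (the classical three-bridge examples accompanying Petersen's theorem are connected, $3$-regular, of even order, and have no $1$-factor), it need not be $r'$-edge-colorable, and nothing in Theorem~\ref{thm:divisiblebyk} guarantees any edge-connectivity of the individual factors $H_j$. So the step ``cut some $H_j$ into an $(r'-1)$-regular part plus a perfect matching'' can genuinely fail.

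The recombination that works (and is what Thomassen's Theorem 3 does) never splits an individual $H_j$. Every odd $r_i$ satisfies $r_i\ge r'$ (it is at least $r'-1$, which is even), and every even $r_i$ is at least $2$, so the number $s$ of odd parts satisfies $s\le k$ and $s\equiv kr'\equiv k\pmod 2$. Assign one whole factor $H_j$ to each odd part; the union of the remaining $k-s$ factors is $(k-s)r'$-regular of even degree, hence decomposes into $2$-factors by Petersen's theorem (which needs no connectivity hypothesis), and the count $\sum_{r_i\,\mathrm{odd}}(r_i-r')/2+\sum_{r_i\,\mathrm{even}}r_i/2=(k-s)r'/2$ shows that these $2$-factors can be distributed to complete each $G_i$ to exactly $r_i$-regular. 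Your alternative orientation-based route is likewise not carried out: de~Werra's theorem only equalizes colour classes, and an exact unequal-proportion version is asserted but not established.
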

\begin{proof}
{Apply Corollary~\ref{cor:r-regular:factorization} along with a similar argument stated in the proof of Theorem 3 in \cite{Thomassen}.
}\end{proof}
The following corollary is a counterpart of Corollary~\ref{cor:r1rm:factorizations} and replaces a weaker edge-connectivity condition compared to Theorem 4 in \cite{Thomassen}. 
The proof technique shows a worthwhile application of this kind of odd-edge-connectivity for working with supergraphs.
\begin{cor}
{Let $r$ be a positive integer
and let $r_1,\ldots, r_m$ be $m$ positive integers satisfying
 $r=r_1+\cdots+r_m$ and $r_i\ge r/k-1\ge 2$ in which $k$ is positive divisor of $r$ with $r/k$ odd. 
If $G$ is a graph with $r|V(G)|$ even satisfying $\Delta(G)\le r$ and for every vertex X with $|X|$ odd,
$$d_G(X)\ge 3k-3,$$
then it can be edge-decomposed into factors $G_1,\ldots, G_m$ satisfying $\Delta(G_i)\le r_i$ for each $i$ with $1\le i \le m$. 
}\end{cor}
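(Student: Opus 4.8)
The plan is to deduce this from the regular case, Corollary~\ref{cor:r1rm:factorizations}, by passing to an $r$-regular supergraph that inherits a slightly stronger cut condition. First I would form $\widetilde G$ as follows: take two disjoint copies $G'$ and $G''$ of $G$ and, for every $v\in V(G)$, join the corresponding vertices $v'$ and $v''$ by $r-d_G(v)$ parallel edges; this is possible since $\Delta(G)\le r$, and the resulting multigraph $\widetilde G$ is $r$-regular because each $v'$ acquires degree $d_G(v)+(r-d_G(v))=r$. Moreover $|V(\widetilde G)|=2|V(G)|$ is even, so the even-order hypothesis of Corollary~\ref{cor:r1rm:factorizations} is automatic; this doubling trick is what lets us avoid assuming $|V(G)|$ itself is even.

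Next I would verify that $d_{\widetilde G}(A)\ge 3k-2$ for every $A\subseteq V(\widetilde G)$ with $|A|$ odd. Setting $X=\{v\in V(G): v'\in A\}$ and $Y=\{v\in V(G): v''\in A\}$ and counting the edges of $\widetilde G$ crossing $A$ in its three parts ($G'$, $G''$, and the middle edges) gives $d_{\widetilde G}(A)=d_G(X)+d_G(Y)+\sum_{v\in X\triangle Y}(r-d_G(v))$, so in particular $d_{\widetilde G}(A)\ge d_G(X)$ and $d_{\widetilde G}(A)\ge d_G(Y)$. Since $|A|=|X|+|Y|$ is odd, exactly one of $|X|,|Y|$ is odd, and applying the hypothesis on $G$ to whichever of $X,Y$ has odd size yields $d_{\widetilde G}(A)\ge 3k-3$. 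The key point is then a parity argument: $\widetilde G$ is $r$-regular and $r=k\cdot(r/k)$ with $r/k$ odd, so $r\equiv k\pmod 2$, whence $d_{\widetilde G}(A)\equiv r|A|\equiv k\pmod 2$; since $3k-3\equiv k+1\pmod 2$, the value $3k-3$ is ruled out, and therefore $d_{\widetilde G}(A)\ge 3k-2$.

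With these two facts in hand I would apply Corollary~\ref{cor:r1rm:factorizations} to $\widetilde G$ with the same data $r,r_1,\dots,r_m,k$ (the remaining hypotheses $r=r_1+\cdots+r_m$, $r_i\ge r/k-1\ge 2$, $k\mid r$, and $r/k$ odd are exactly those given), obtaining an edge-decomposition $\widetilde G=\widetilde G_1\oplus\cdots\oplus\widetilde G_m$ in which every $\widetilde G_i$ is $r_i$-regular. Finally, I would let $G_i$ be the subgraph of $\widetilde G_i$ formed by those of its edges that lie in the copy $G'$; identifying $G$ with $G'$, the graphs $G_1,\dots,G_m$ form an edge-decomposition of $G$, and for each vertex $v$ we have $d_{G_i}(v)\le d_{\widetilde G_i}(v)=r_i$, so $\Delta(G_i)\le r_i$, as required.

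I expect the crux of the argument to be the parity step in the second paragraph: the observation that, after embedding $G$ in an $r$-regular graph, the fixed parity of $r$ (forced by $r/k$ being odd) makes every odd cut miss the value $3k-3$, which is precisely the mechanism that upgrades the weaker $(3k-3)$ cut hypothesis available for $G$ to the $(3k-2)$ cut hypothesis demanded by Corollary~\ref{cor:r1rm:factorizations}. A minor but essential bookkeeping point is that the doubling must be arranged so that an odd-size vertex set of $\widetilde G$ always restricts to an odd-size set in one of the two copies of $G$, so that the hypothesis on $G$ can legitimately be invoked there.
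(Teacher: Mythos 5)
Your proof is correct and follows the same overall strategy as the paper's: embed $G$ in an $r$-regular supergraph, use the parity of $r$ (forced by $r/k$ being odd) to upgrade the odd-cut bound from $3k-3$ to $3k-2$, apply Corollary~\ref{cor:r1rm:factorizations}, and restrict the resulting $r_i$-regular factors back to $G$. The one genuine difference is the construction of the supergraph: the paper augments $G$ in place, adding edges and loops until it becomes $r$-regular (possible precisely because $r|V(G)|$ is even and loops are allowed), whereas you take two disjoint copies of $G$ joined by $r-d_G(v)$ parallel edges at each vertex. Your doubling costs a little extra bookkeeping in verifying the cut condition for $\widetilde G$ (your formula $d_{\widetilde G}(A)=d_G(X)+d_G(Y)+\sum_{v\in X\triangle Y}(r-d_G(v))$ and the observation that exactly one of $|X|,|Y|$ is odd are both correct), but it buys a small advantage: $\widetilde G$ has even order automatically, so the even-order hypothesis of Corollary~\ref{cor:r1rm:factorizations} holds without comment, whereas the paper's in-place augmentation keeps the original vertex set and tacitly relies on $|V(G)|$ being even --- something the stated hypotheses only force indirectly (for $k\ge 2$, via the cut condition applied to $X=V(G)$). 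Both arguments are sound.
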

\begin{proof}
{Add some edges to $G$, as long as possible, such that the resulting graph $G'$ has maximum degree at most $r$.
Since adding loops is possible and $r|V(G)|$ is even, the graph $G'$ must be $r$-regular.
Obviously, for every vertex set $X$ with $|X|$ odd, we still have $d_{G'}(X)\ge 3k-3$.
In addition, since $d_{G'}(X)$ and $k$ have the same parity, it implies that $d_{G'}(X)\ge 3k-2$. 
Thus by Corollary~\ref{cor:r1rm:factorizations}, the graph $G'$ can be edge-decomposed into factors $G'_1,\ldots, G'_m$ such that every graph $G'_i$ is $r_i$-regular. Now, it is enough to induce this factorization for $G$ to complete the proof.
}\end{proof}
%
%
%
%
%
%
%
%
%
\section{A sufficient edge-connectivity condition for the existence of a parity factor}
\label{sec:epsilon-parity-factor}
In 1956 Hoffman made the following theorem 
on the the existence factors 
in bipartite graphs
 whose degrees are close to $\varepsilon$ of the corresponding degrees in the main graph. 
\begin{thm}{\rm (\cite{Hoffman})}\label{thm:Hoffman}
{Let $\varepsilon $ be a real number with $0\le \varepsilon\le 1$. If $G$ is a bipartite graph,
then it has a factor $F$ such that for each vertex $v$, 
 $ | d_{F}(v)-\varepsilon d_G(v)|< 1 .$
}\end{thm}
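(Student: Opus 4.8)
The plan is to reduce the statement to a classical degree-constrained subgraph problem and then invoke integrality of the bipartite degree polytope. Put $g(v)=\lfloor \varepsilon\, d_G(v)\rfloor$ and $f(v)=\lceil \varepsilon\, d_G(v)\rceil$ for every $v\in V(G)$; these are integers with $g(v)\le \varepsilon\, d_G(v)\le f(v)$ and $f(v)-g(v)\le 1$. I claim it suffices to exhibit a factor $F$ of $G$ with $g(v)\le d_F(v)\le f(v)$ for all $v$. Indeed, if $\varepsilon\, d_G(v)$ is an integer then $g(v)=f(v)$ and $d_F(v)=\varepsilon\, d_G(v)$, while otherwise $g(v)<\varepsilon\, d_G(v)<g(v)+1=f(v)$ and $d_F(v)\in\{g(v),f(v)\}$; in every case $|d_F(v)-\varepsilon\, d_G(v)|<1$, which is exactly the assertion. (Note that $\varepsilon$ being irrational causes no difficulty, since only the integers $g,f$ enter the argument from here on.)

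To produce such an $F$, I would first observe that the fractional ``subgraph'' assigning weight $\varepsilon$ to every edge is feasible: its degree at $v$ equals $\varepsilon\, d_G(v)$, which lies in $[g(v),f(v)]$ by construction. Hence the polytope $P=\{x\in[0,1]^{E(G)}:\ g(v)\le \sum_{e\ni v}x_e\le f(v)\ \text{for all }v\in V(G)\}$ is nonempty. Because $G$ is bipartite, its vertex--edge incidence matrix is totally unimodular, so $P$, being cut out by that matrix together with the bounds $0\le x\le 1$ and the integral vectors $g,f$, is an integral polytope. Therefore $P$ contains a $0/1$ point $\chi$, and $F:=\{e\in E(G):\chi_e=1\}$ is the desired factor. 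Equivalently, one may phrase this as a feasible-circulation problem with lower bounds on a bipartite network and appeal to integrality of max-flow; or prove existence of $F$ directly by starting from $F=G$ and repeatedly rerouting along an $F$-alternating walk so as to repair an overfull vertex without creating a deficient one, bipartiteness guaranteeing that such an improving walk exists and that a suitable nonnegative-integer potential strictly decreases at each step.

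The only substantive point is the existence of $F$; translating to and from the $\varepsilon$-bound is routine bookkeeping. In the polytope formulation the crux is total unimodularity of the bipartite incidence matrix (ultimately a K{\"o}nig-type fact, cf.~\cite{Konig}); in the alternating-walk formulation the crux is the case analysis showing an improving walk is always available. Either way bipartiteness is essential: for a general graph an odd component already forces a half-integral, non-integral extreme point of $P$ (an odd cycle with target degree $1$ everywhere is the basic example), and the clean bound $|d_F(v)-\varepsilon\, d_G(v)|<1$ genuinely fails there — which is exactly why Hoffman's theorem is stated for bipartite $G$.
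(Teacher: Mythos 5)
This theorem is stated in the paper purely as a citation to Hoffman (1956); the paper supplies no proof of it, so there is no internal argument to compare yours against. Judged on its own, your proof is correct and is essentially the standard modern proof. The reduction is sound: with $g(v)=\lfloor \varepsilon d_G(v)\rfloor$ and $f(v)=\lceil \varepsilon d_G(v)\rceil$ one has $f(v)-g(v)\le 1$, and any integer $d_F(v)\in[g(v),f(v)]$ satisfies the strict bound $|d_F(v)-\varepsilon d_G(v)|<1$ (with equality to $0$ in the case $\varepsilon d_G(v)\in\mathbb{Z}$), so the $<1$ conclusion really does follow, not just $\le 1$. The feasibility witness $x_e=\varepsilon$ and the total unimodularity of the bipartite incidence matrix (unaffected by the multiple edges the paper permits, since repeated columns do not destroy TU; loops cannot occur in a bipartite graph) give an integral, hence $0/1$, point of the nonempty polytope, which is the desired factor. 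Your closing remark correctly locates why bipartiteness is essential: an odd cycle with $\varepsilon=1/2$ would require a perfect matching, which is exactly the obstruction that degrades the bound to $\le 1$ in the Kano--Saito version (Theorem~\ref{thm:Kano-Saito}) for general graphs. The only part I would not lean on as written is the alternating-walk sketch at the end, which is left at the level of an assertion that "an improving walk always exists"; but since the polytope argument is complete, this is harmless.
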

In 1983  Kano and Saito generlized Theorem~\ref{thm:Hoffman} to general graphs as the next version.
\begin{thm}{\rm (\cite{Kano-Saito})}\label{thm:Kano-Saito}
{Let $\varepsilon $ be a real number with $0\le \varepsilon\le 1$. 
If $G$ is a graph, then it has a factor $F$ such that for each vertex $v$, 
 $| d_{F}(v)-\varepsilon d_G(v)|\le 1 .$
}\end{thm}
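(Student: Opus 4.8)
The plan is to deduce the statement from Lovász's classical $(g,f)$-factor theorem, the whole point being to work not with the naive rounding window $[\lfloor \varepsilon d_G(v)\rfloor,\lceil \varepsilon d_G(v)\rceil]$ but with a slightly padded one. After discarding isolated vertices (and, if the convention in force allows them, loops, which can be absorbed into the targets by a routine separate argument; we may as well assume $G$ is loopless), I would set, for every vertex $v$,
$$g(v)=\max\{0,\ \lceil \varepsilon d_G(v)\rceil-1\},\qquad f(v)=\min\{d_G(v),\ \lfloor \varepsilon d_G(v)\rfloor+1\},$$
and call a spanning subgraph $F$ with $g(v)\le d_F(v)\le f(v)$ for all $v$ a $(g,f)$-factor. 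A short computation gives $0\le g(v)\le \varepsilon d_G(v)\le f(v)\le d_G(v)$ and, crucially, $[g(v),f(v)]\subseteq[\varepsilon d_G(v)-1,\ \varepsilon d_G(v)+1]$; hence every $(g,f)$-factor $F$ automatically satisfies $|d_F(v)-\varepsilon d_G(v)|\le 1$, which is exactly the conclusion. So the problem is reduced to producing one $(g,f)$-factor.

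The decisive property of this choice is that $g(v)<f(v)$ at every single vertex: one sees that $f(v)-g(v)$ equals $2$ when $\varepsilon d_G(v)\in\mathbb Z$ and $1$ otherwise. This strict gap is precisely the extra room that turns the strict bound $<1$ of the bipartite case (Theorem~\ref{thm:Hoffman}) into the bound $\le 1$ in general; it cannot be avoided, since $K_3$ with $\varepsilon=\tfrac12$ shows that no spanning subgraph can have all its degrees equal to $\varepsilon d_G(v)=1$.

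Then I would invoke Lovász's $(g,f)$-factor theorem in its usual form: $G$ has a $(g,f)$-factor if and only if, for all disjoint $S,T\subseteq V(G)$,
$$\sum_{v\in S}f(v)+\sum_{v\in T}\bigl(d_G(v)-g(v)\bigr)-d_G(S,T)-q(S,T)\ge 0,$$
where $q(S,T)$ counts the components $C$ of $G-(S\cup T)$ on which $g\equiv f$ and for which $\sum_{v\in C}f(v)+d_G(C,T)$ is odd. Because $g(v)<f(v)$ holds at every vertex, no component can satisfy $g\equiv f$, so $q(S,T)=0$ identically and the criterion collapses to $\sum_{v\in S}f(v)+\sum_{v\in T}(d_G(v)-g(v))\ge d_G(S,T)$. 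Now $\sum_{v\in S}f(v)\ge \varepsilon\sum_{v\in S}d_G(v)\ge \varepsilon\, d_G(S,T)$ and $\sum_{v\in T}(d_G(v)-g(v))\ge (1-\varepsilon)\sum_{v\in T}d_G(v)\ge (1-\varepsilon)\, d_G(S,T)$, so the left-hand side is at least $\varepsilon\, d_G(S,T)+(1-\varepsilon)\, d_G(S,T)=d_G(S,T)$, and the condition holds for every pair $(S,T)$. Hence $G$ has a $(g,f)$-factor, and the theorem follows.

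I expect the only genuinely delicate point to be the bookkeeping in the first step: one must check in one stroke that the padded interval $[g(v),f(v)]$ still lies inside $[\varepsilon d_G(v)-1,\varepsilon d_G(v)+1]$ (so that the error is really $\le 1$, not merely $<2$) and that it nevertheless always contains at least two integers (so that the odd-component term of Lovász's formula vanishes and the deficiency inequality degenerates to the trivial estimate above). A route through Hoffman's bipartite theorem — split each $v$ into $v^+,v^-$ along a balanced orientation, apply Theorem~\ref{thm:Hoffman} to the resulting bipartite graph, and transfer the factor back — is tempting, but the two roundings at $v^+$ and $v^-$ need not cancel and one only obtains $|d_F(v)-\varepsilon d_G(v)|<2$; steering those roundings to get $\le 1$ seems to need essentially the same parity analysis, so passing directly through the $(g,f)$-factor theorem is the cleaner option.
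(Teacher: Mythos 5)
Your argument is correct, but note that the paper does not prove this statement at all: Theorem~\ref{thm:Kano-Saito} is quoted from Kano and Saito's 1983 paper, so there is no in-house proof to compare against. Your reduction to Lov\'asz's (non-parity) $(g,f)$-factor theorem with the padded targets $g(v)=\max\{0,\lceil\varepsilon d_G(v)\rceil-1\}$ and $f(v)=\min\{d_G(v),\lfloor\varepsilon d_G(v)\rfloor+1\}$ is sound: the containment $[g(v),f(v)]\subseteq[\varepsilon d_G(v)-1,\varepsilon d_G(v)+1]$ holds, the strict gap $g(v)<f(v)$ kills the odd-component term $q(S,T)$ identically, and the convexity estimate $\sum_{S}f+\sum_{T}(d_G-g)\ge\varepsilon\,d_G(S,T)+(1-\varepsilon)\,d_G(S,T)=d_G(S,T)$ settles the deficiency condition. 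This is in fact the natural non-parity shadow of what the paper does for its own Theorem~\ref{thm:parity:version}, where the parity constraint forces $g\equiv f \pmod 2$, the odd-component count cannot be made to vanish, and the author must instead bound it via the decomposition into $P_a$, $P_b$, $P_c$ using the edge-connectivity hypotheses; your setting buys the luxury of $q(S,T)=0$ and hence needs no connectivity at all. Two small caveats: your claim that $f(v)-g(v)=2$ whenever $\varepsilon d_G(v)\in\mathbb Z$ fails at the truncated endpoints ($\varepsilon\in\{0,1\}$ give a gap of $1$), though only $g(v)<f(v)$ is actually used and that survives; and since the paper's convention admits loops, the dismissal of loops as ``routine'' is the one genuinely unexamined point --- a loop forces an even contribution to $d_F(v)$, so absorbing loops after the fact can double the error to $2$ unless they are folded into the parity bookkeeping, and a careful write-up should either exclude loops explicitly (as Kano--Saito do) or handle them inside the $(g,f)$ framework.
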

In 2007 Correa and Goemans~\cite{Correa-Goemans} formulated the following factorization version of Theorem~\ref{thm:Hoffman}  for bipartite graphs. Later,  Correa and Matamala  (2008)~\cite{Correa-Matamala} remarked that 
it is possible to generalize their result to general graphs
by replacing Theorem~\ref{thm:Kano-Saito} in their proof, and Feige and Singh (2008)~\cite{Feige-Singh} 
 introduced an interesting alternative proof for this theorem
 using linear algebraic techniques.
\begin{thm}{\rm (\cite{Correa-Matamala, Correa-Goemans})}\label{thm:Correa-Goemans}
{Let $\varepsilon_1,\ldots,\varepsilon_k$ be $k$ nonnegative real numbers with $\varepsilon_1+\cdots +\varepsilon_k=1$.
If $G$ is a graph, then it can be edge-decomposed into $k$ factors $G_1,\ldots, G_k$ such that for each $v\in V(G_i)$,
 $$|d_{G_i}(v)-\varepsilon_i d_G(v)|<3.$$
}\end{thm}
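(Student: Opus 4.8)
The plan is to use Theorem~\ref{thm:Kano-Saito} at every step of a divide-and-conquer recursion on the colour set --- this is the Correa--Goemans argument for bipartite graphs with the bipartite input Theorem~\ref{thm:Hoffman} replaced by its general form Theorem~\ref{thm:Kano-Saito}, following the Correa--Matamala remark. Concretely, I would set up a recursive procedure whose input is a subgraph $H\subseteq G$, a nonempty set $S\subseteq\{1,\dots,k\}$ of active colours, and the promise that $|d_H(v)-w_S\,d_G(v)|\le\eta$ for every $v$, where $w_S:=\sum_{i\in S}\varepsilon_i$ and $\eta\ge 0$ records the error accumulated so far; the output is an edge-decomposition $(H_i)_{i\in S}$ of $H$. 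The top-level call is $H=G$, $S=\{1,\dots,k\}$, $w_S=1$, $\eta=0$, and the target is that every returned factor satisfies $|d_{H_i}(v)-\varepsilon_i\,d_G(v)|<3$. The base case $|S|=1$ is trivial: output $H$, and the promised inequality becomes $|d_H(v)-\varepsilon_i d_G(v)|\le\eta$, which the analysis below keeps strictly below $3$.

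For $|S|\ge 2$ the step is: choose a partition $S=S_1\sqcup S_2$ into nonempty parts, apply Theorem~\ref{thm:Kano-Saito} to $H$ with $\varepsilon:=w_{S_1}/w_S$ to get a factor $L$ with $|d_L(v)-(w_{S_1}/w_S)d_H(v)|\le 1$, put $H^{(1)}:=L$ and $H^{(2)}:=H\setminus L$, and recurse on $(H^{(1)},S_1)$ and $(H^{(2)},S_2)$. Writing $d_H(v)=w_S d_G(v)+\theta$ with $|\theta|\le\eta$ gives $|d_{H^{(j)}}(v)-w_{S_j}d_G(v)|\le 1+(w_{S_j}/w_S)\,\eta$, so the error parameter is updated by $\eta\mapsto 1+(w_{S_j}/w_S)\,\eta$ along branch $j$. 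Unrolling, the error attached to a leaf colour $i$ at depth $D$ along its root-to-leaf branch, with successive active weights $1=w^{(0)}>w^{(1)}>\dots>w^{(D)}=\varepsilon_i$ and ratios $\rho_s:=w^{(s)}/w^{(s-1)}$, is $1+\sum_{t=1}^{D-1}\rho_{t+1}\rho_{t+2}\cdots\rho_D$, and everything comes down to bounding this uniformly by something $<3$. The split rule that should make this possible is: if some colour $i_0\in S$ is \emph{dominant}, i.e.\ $\varepsilon_{i_0}\ge w_S/2$, take $S_1=\{i_0\}$; otherwise split greedily so that $w_{S_1},w_{S_2}$ are as near $w_S/2$ as the weights permit.

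The step I expect to be the genuine obstacle is the weight-decay estimate behind that bound. On every step other than the last edge into the leaf $\{i\}$ one has $\rho_s\le\tfrac12$ in the dominant case (the part $S\setminus\{i_0\}$ has weight $\le w_S/2$) and $\rho_s$ bounded away from $1$ in the balanced case, with the further structural fact that the active set strictly shrinks along the branch, so the products $\rho_{t+1}\cdots\rho_D$ decay geometrically and their sum is controlled; the last ratio $\rho_D=\varepsilon_i/w^{(D-1)}$ is only guaranteed to be $<1$ (it can be close to $1$ when a dominant colour is isolated deep in the tree), and this is where one has to work hardest: track the raw Kano--Saito error of each leaf separately and argue that the chain of ratios above it is small enough to absorb an almost-unit final ratio while keeping the total strictly below $3$ rather than merely below a larger absolute constant. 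Once the recursion is shown to respect the target, setting $G_i:=H_i$ finishes the proof.

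A cleaner alternative, which bypasses the calibration of the split rule, is the iterated-rounding proof of Feige and Singh: form the linear program with variables $x_{e,i}\ge 0$ and constraints $\sum_i x_{e,i}=1$ for each edge $e$ and $\sum_{e\ni v}x_{e,i}=\varepsilon_i d_G(v)$ for each vertex $v$ and colour $i$ (feasible via $x_{e,i}=\varepsilon_i$), then repeatedly pass to an extreme point, fix every variable equal to $0$ or $1$, and drop a degree constraint whenever a basic-solution counting argument permits it --- the key combinatorial input being that each surviving edge appears in exactly two vertex-constraints for each colour, so the support is small enough that some constraint can always be discarded; bounding how much slack a degree constraint can have accumulated when it is finally discarded yields $|d_{G_i}(v)-\varepsilon_i d_G(v)|<3$ directly, for general $G$.
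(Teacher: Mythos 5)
The paper does not actually prove this statement: Theorem~\ref{thm:Correa-Goemans} is quoted from the literature, and the surrounding text only records that the proof is the Correa--Goemans recursion with Theorem~\ref{thm:Kano-Saito} substituted for Theorem~\ref{thm:Hoffman} (the Correa--Matamala remark), with the Feige--Singh iterated-rounding argument as an alternative. Your two routes are therefore exactly the ones the paper points to, so there is no methodological divergence to report; the only question is whether your sketch stands on its own as a proof, and as written it does not.

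The recursion and the error update $\eta\mapsto 1+(w_{S_j}/w_S)\eta$ are set up correctly for both children (for $H^{(2)}=H\setminus L$ one writes $d_{H^{(2)}}(v)=(w_{S_2}/w_S)\,d_H(v)-\delta$ with $|\delta|\le 1$, which gives the same form). The gap is the step you yourself flag as ``the genuine obstacle'': showing that $1+\sum_{t}\rho_{t+1}\cdots\rho_D<3$ uniformly over leaves. Your split rule does not obviously deliver this. The natural invariant --- every non-singleton child has ratio at most $1/2$, so internal error stays below $2$ and a final ratio near $1$ is affordable --- is unachievable: for three equal weights $\varepsilon_1=\varepsilon_2=\varepsilon_3=w_S/3$ every split produces a non-singleton child of ratio $2/3$, and in general the best split of a dominant-free set can leave a part of weight $2w_S/3$. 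A chain bound with internal ratios $2/3$ gives $\rho_D\bigl(1+\rho_{D-1}+\rho_{D-1}\rho_{D-2}+\cdots\bigr)\le 1\cdot(1+2)=3$, hence total error $\le 4$, not $<3$. Closing this requires the finer bookkeeping of \cite{Correa-Goemans} --- e.g.\ showing that a split of ratio near $2/3$ forces the heavy child to contain very few colours, so the bad chains (many large internal ratios followed by a near-unit leaf ratio) cannot occur --- and that combinatorial calibration is precisely what is missing. The Feige--Singh alternative is likewise only an outline: the counting argument at an extreme point (tight constraints versus fractional variables, showing some degree constraint can always be dropped) and the bound on the slack a dropped constraint can accumulate are the entire proof there, and neither is carried out. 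In short: you have identified the right roads, but both stop short of the destination.
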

In this section, we shall prove the parity versions of Theorems~\ref{thm:Kano-Saito} and~\ref{thm:Correa-Goemans}
 which were mentioned in the introduction as Theorems~\ref{thm:intro:even-even} and~\ref{thm:intro:even-factorization}.
Besides them, 
 we also generalize a recent result in~\cite{ModuloBounded}.
Our proofs are based on the following well-known lemma due to Lov{\'a}sz (1972).
\begin{lem}{\rm (\cite{Lovasz-1972})}\label{lem:Lovasz:parity}
{Let $G$ be a graph and let $g$ and $f$ be two integer-valued functions on $V(G)$ satisfying 
$g(v)\le f(v)$ and $g(v)\stackrel{2}{\equiv}f(v)$ for each $v\in V(G)$. 
Then $G$ has a parity $(g,f)$-factor if and only if 
for all disjoint subsets $A$ and $B$ of $V(G)$,
$$\omega_{f}(G, A,B)<2+ \sum_{v\in A} f(v)+\sum_{v\in B} (d_{G}(v)-g(v))-d_G(A,B),$$
where $\omega_{f}(G, A,B)$ denotes the number of components $G[X]$ 
of $G\setminus (A\cup B)$
satisfying $\sum_{v\in X}f(v)\stackrel{2}{\not\equiv}d_G(X,B)$.
}\end{lem}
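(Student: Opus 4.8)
The plan is to prove the two implications separately: necessity is a direct parity count, while sufficiency is the substantial part and is cleanest to obtain by reducing to Tutte's $f$-factor theorem (or, alternatively, by a direct Edmonds-type alternating-path argument).

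For \emph{necessity}, suppose $F$ is a parity $(g,f)$-factor and let $A,B\subseteq V(G)$ be disjoint. For every component $G[X]$ of $G\setminus(A\cup B)$ one has $d_F(X)=\sum_{v\in X}d_F(v)-2e_F(X)\equiv\sum_{v\in X}f(v)\pmod 2$, and since $X$ sends no edge outside $A\cup B\cup X$, also $d_F(X)=d_F(X,A)+d_F(X,B)$. Hence if $G[X]$ is one of the $\omega_f(G,A,B)$ components with $\sum_{v\in X}f(v)\not\equiv d_G(X,B)$, then $d_F(X,A)+\bigl(d_G(X,B)-d_F(X,B)\bigr)$ is odd, so it is at least $1$. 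Summing this over those components and then estimating the three resulting quantities using $d_F(v)\le f(v)$ on $A$, $d_F(v)\ge g(v)$ on $B$, $e_F(B)\le e_G(B)$, and $d_F(A,B)\le d_G(A,B)$, one gets
\[
\omega_f(G,A,B)\ \le\ \sum_{v\in A}f(v)+\sum_{v\in B}\bigl(d_G(v)-g(v)\bigr)-d_G(A,B),
\]
which is stronger than the asserted strict inequality (the additive $2$ is there to absorb a parity gap relevant only for the converse).

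For \emph{sufficiency} I would first reduce to the case $G$ connected with $0\le g(v)\le f(v)\le d_G(v)$ for every $v$ (truncating $g,f$ changes neither side of the equivalence), and then pass to an auxiliary graph $\widehat G$ obtained by replacing each vertex $v$ with a gadget $D_v$: one ``port'' for every edge of $G$ incident to $v$, together with an internal part designed so that in any perfect matching of $\widehat G$ the internal part absorbs an arbitrary \emph{even} number between $0$ and $f(v)-g(v)$ of the ports; then the number of ports of $D_v$ matched by edges of $G$ lies in $\{g(v),g(v)+2,\dots,f(v)\}$, and $\widehat G$ has a perfect matching exactly when $G$ has a parity $(g,f)$-factor. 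Applying Tutte's $1$-factor theorem to $\widehat G$, a violation of Tutte's condition yields a vertex set whose intersection with each $D_v$ is of a controlled type: the $v$ whose gadget is saturated ``toward the ports'' form $A$, those saturated ``toward the interior'' form $B$, the untouched gadgets correspond to the components $G[X]$ of $G\setminus(A\cup B)$, and Tutte's count of odd components becomes, thanks to the parity engineered into each $D_v$, exactly $\omega_f(G,A,B)$ together with a violation of the displayed inequality.

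The real obstacle is the gadget $D_v$ itself: one must realize an arithmetic progression of admissible port-counts with common difference $2$ using only exact-degree (matching) constraints, which is delicate because naive pendant constructions merely reproduce the parity condition one is trying to eliminate; once that is in place, the remaining work — matching up ``odd component of $\widehat G$'' with ``component counted by $\omega_f$'' and tracking the additive $2$ — is bookkeeping. A longer but gadget-free alternative avoids $\widehat G$ entirely: take a subgraph $F$ with $g\le d_F\le f$ minimizing the number of vertices $v$ with $d_F(v)\not\equiv f(v)\pmod 2$; if that number is positive, run an Edmonds-type alternating/blossom search from such a vertex, which either augments (contradicting minimality) or exposes the barrier $(A,B)$ directly.
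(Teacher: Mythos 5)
The paper does not prove this lemma at all: it is imported verbatim from Lov\'asz \cite{Lovasz-1972} as a known tool, so there is no in-paper argument to compare yours against, and I can only judge your proposal on its own merits. Your necessity direction is correct and essentially complete: the parity identity $d_F(X)\equiv\sum_{v\in X}f(v)\pmod 2$ for each component $G[X]$ of $G\setminus(A\cup B)$, the observation that each counted component forces $d_F(X,A)+\bigl(d_G(X,B)-d_F(X,B)\bigr)\ge 1$, and the three estimates you list do combine (using $e_F(A)\ge 0$, $e_F(B)\le e_G(B)$, $F\subseteq G$) to give $\omega_f(G,A,B)\le\sum_{v\in A}f(v)+\sum_{v\in B}(d_G(v)-g(v))-d_G(A,B)$, which implies the stated strict inequality.

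The sufficiency direction, however, has a genuine gap, and you have named it yourself: the gadget $D_v$ that forces the number of externally matched ports to lie in $\{g(v),g(v)+2,\dots,f(v)\}$ is the entire content of the reduction, and it is never constructed. This is not a routine omission. Tutte's classical gadgets realize \emph{intervals} of consecutive admissible degrees; realizing a fixed \emph{parity class} of an interval by a perfect-matching constraint is exactly what separates the parity $(g,f)$-factor theorem from the ordinary $(g,f)$-factor theorem, and the naive constructions (pendant edges, pendant double edges) merely re-encode the parity condition rather than eliminate it, as you concede. Moreover, even granting a correct gadget, the claim that ``Tutte's count of odd components becomes $\omega_f(G,A,B)$ together with a violation of the displayed inequality'' hides the second half of the work: a Tutte set of $\widehat{G}$ meets each $D_v$ in an a priori arbitrary subset, and one needs an extremal choice of the violating set to force the clean trichotomy ($v\in A$, $v\in B$, or $D_v$ untouched) and to recover the exact additive constant $2$ and the term $-d_G(A,B)$. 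Your fallback Edmonds-type argument has the same status: it presupposes both the $(g,f)$-factor theorem (whose hypothesis must first be deduced from the one stated here) and the alternating-walk/blossom analysis for parity repair, which is essentially Lov\'asz's original proof and is precisely the part that is missing. As it stands, the proposal establishes only the easy direction of the equivalence.
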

\subsection{Parity $\varepsilon$-factors}
The following theorem provides a parity version for Theorem~\ref{thm:Kano-Saito} on edge-connected graphs. 
Note that some edge-connected versions of that theorem were former studied in~\cite{Anstee-Nam, Egawa-Kano, Kano}.
\begin{thm}\label{thm:parity:version}
{Let $G$ be a connected graph, let $f:V(G)\rightarrow Z_2$ be a mapping with
$ \sum_{v\in V(G)}f(v)\stackrel{2}{\equiv}0$, and let $\varepsilon$ be a real number with
 $0<  \varepsilon <  1$.
If for every nonempty proper subset $X$ of $V(G)$, 
 $$d_G(X)\ge \begin{cases}
1/\varepsilon,	&\text{when $\sum_{v\in X}f(v)$ is odd};\\
1/(1-\varepsilon),	&\text{when $\sum_{v\in X}(d_G(v)-f(v))$ is odd},
\end {cases}$$
then $G$ has an $f$-parity factor $F$ such that for each vertex $v$,
 $$ \lfloor  \varepsilon d_G(v) \rfloor-1\le d_{F}(v)\le \lceil \varepsilon d_G(v) \rceil +1.$$
Furthermore, for a given arbitrary vertex $z$, we can arbitrary have $d_F(z)\ge \varepsilon d_G(z)$ or $d_F(z)\le \varepsilon d_G(z)$.
}\end{thm}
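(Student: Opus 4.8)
The plan is to deduce the statement from Lov\'asz's parity-factor criterion (Lemma~\ref{lem:Lovasz:parity}). To keep the $Z_2$-valued map $f$ of the hypothesis distinct from the integer bounds in that lemma, I would apply the lemma with bounds $g$ and $h$ defined by: for each vertex $v$, let $g(v)$ be the largest integer at most $\varepsilon d_G(v)$ with $g(v)\equiv f(v)\pmod 2$, and let $h(v)$ be the smallest integer at least $\varepsilon d_G(v)$ with $h(v)\equiv f(v)\pmod 2$. Then $g(v)\equiv h(v)\equiv f(v)\pmod 2$, $h(v)-g(v)\in\{0,2\}$, and
$$\lfloor\varepsilon d_G(v)\rfloor-1\le g(v)\le\varepsilon d_G(v)\le h(v)\le\lceil\varepsilon d_G(v)\rceil+1 ,$$
so a parity $(g,h)$-factor is exactly an $f$-parity factor meeting the required degree bounds. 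For the final sentence of the theorem I would, at the vertex $z$, collapse the window: replace $g(z)$ by $h(z)$ when $d_F(z)\ge\varepsilon d_G(z)$ is wanted, and $h(z)$ by $g(z)$ when $d_F(z)\le\varepsilon d_G(z)$ is wanted; the resulting one-point window still lies in the interval above, forces the desired inequality at $z$, and moves $g(z)$ or $h(z)$ off $\varepsilon d_G(z)$ by strictly less than $2$.

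Next I would verify the Lov\'asz inequality
$$\omega_h(G,A,B)<2+\sum_{v\in A}h(v)+\sum_{v\in B}\bigl(d_G(v)-g(v)\bigr)-d_G(A,B)$$
for all disjoint $A,B\subseteq V(G)$. Since $h(v)\ge\varepsilon d_G(v)$ and $d_G(v)-g(v)\ge(1-\varepsilon)d_G(v)$ for every $v$ (with a total error less than $2$, concentrated at $v=z$, in the ``furthermore'' situation), and since $\sum_{v\in A}d_G(v)\ge d_G(A,B)+\sum_{X}d_G(A,X)$ and $\sum_{v\in B}d_G(v)\ge d_G(A,B)+\sum_{X}d_G(B,X)$ with $X$ ranging over the components of $G\setminus(A\cup B)$, the right-hand side is at least (more than, in the ``furthermore'' case) $\sum_{X}\bigl(\varepsilon\,d_G(A,X)+(1-\varepsilon)\,d_G(B,X)\bigr)$, the sum taken over the \emph{counted} components, i.e.\ those components $G[X]$ with $\sum_{v\in X}h(v)\not\equiv d_G(X,B)\pmod 2$ (the other components contribute nonnegatively and may be dropped, and the leftover constant $2$, which in the ``furthermore'' case is spent, provides the slack). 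Since $\omega_h(G,A,B)$ counts exactly the counted components, it suffices to prove $\varepsilon\,d_G(A,X)+(1-\varepsilon)\,d_G(B,X)\ge 1$ for each counted component $X$.

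For this last step, put $a=d_G(A,X)$ and $b=d_G(B,X)$; then $a+b=d_G(X)$ because $X$ is a whole component, and $a+b\ge 1$ because $G$ is connected and $X\neq V(G)$ (the global hypothesis $\sum_{v}f(v)\equiv 0$ keeps $V(G)$ itself from being a counted component when $A\cup B=\emptyset$). Using $h(v)\equiv f(v)\pmod 2$ and $\sum_{v\in X}d_G(v)\equiv d_G(X)\pmod 2$, the definition of ``counted'' translates into the dictionary: $\sum_{v\in X}f(v)$ is odd iff $b$ is even, and $\sum_{v\in X}(d_G(v)-f(v))$ is odd iff $a$ is even. Hence the edge-connectivity hypothesis gives $a+b\ge 1/\varepsilon$ whenever $b$ is even and $a+b\ge 1/(1-\varepsilon)$ whenever $a$ is even. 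I would then split on the parities of $a,b$: if both are odd, $a,b\ge 1$ yields $\varepsilon a+(1-\varepsilon)b\ge 1$ at once; if $b=0$ then $a\ge 1/\varepsilon$, and if $a=0$ then $b\ge 1/(1-\varepsilon)$, each giving the bound; in every remaining case one of $a,b$ is an even integer $\ge 2$ and the other is $\ge 1$, so $\varepsilon a+(1-\varepsilon)b>1$. This establishes the Lov\'asz inequality, and the desired factor $F$ follows.

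The hard part is the bookkeeping of the second paragraph: one must check that the crude per-vertex estimates $h(v)\ge\varepsilon d_G(v)$ and $d_G(v)-g(v)\ge(1-\varepsilon)d_G(v)$ lose less than $2$ in total, and only at $z$, in the ``furthermore'' case, so that the ``$2+$'' in Lov\'asz's criterion is precisely the slack available; and the parity dictionary for counted components must be arranged so that the correct branch of the hypothesis can be invoked in every case. The remaining four-case check of $\varepsilon a+(1-\varepsilon)b\ge 1$ is routine, but the degenerate sub-cases $a=0$ and $b=0$ must be isolated, since those are the only places where the quantitative bounds $a+b\ge 1/\varepsilon$ and $a+b\ge 1/(1-\varepsilon)$ are genuinely used.
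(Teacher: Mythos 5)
Your proposal is correct and follows essentially the same route as the paper: it invokes Lov\'asz's parity $(g,f)$-factor criterion with the same parity-corrected bounds $\lfloor\varepsilon d_G(v)\rfloor-1\le g(v)\le \varepsilon d_G(v)\le h(v)\le\lceil\varepsilon d_G(v)\rceil+1$, handles the ``furthermore'' clause by the same window-shift at $z$ paid for out of the additive constant $2$, and verifies the criterion by the same component-by-component bound $\varepsilon\,d_G(A,X)+(1-\varepsilon)\,d_G(B,X)\ge 1$, using the edge-connectivity hypothesis exactly where $d_G(X,A)=0$ or $d_G(X,B)=0$. The only cosmetic difference is that your final case split is organized by the parities of $d_G(X,A)$ and $d_G(X,B)$ rather than by the paper's partition of the counted components into $P_a$, $P_b$, $P_c$; the two analyses cover the same cases.
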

\begin{proof}{
For each vertex $v$, let us define $g_0(v)\in
 \{\lfloor \varepsilon d_G(v)\rfloor-1,
 \lfloor \varepsilon d_G(v)\rfloor\}$ 
and $f_0(v)\in \{\lceil \varepsilon d_G(v)\rceil,\lceil \varepsilon d_G(v)\rceil+1 \}$
 such that 
$g_0(v)\stackrel{2}{\equiv}f_0(v)\stackrel{2}{\equiv} f(v)$.
If $g(z)<\varepsilon d_G(z)$ and our goal is to impose that $d_F(z)\ge \varepsilon d_G(z)$,
we will replace $g_0(z)$ by $g_0(z)+2$,
and if $f(z)>\varepsilon d_G(z)$
and our goal is to impose that $d_F(z)\le \varepsilon d_G(z)$,
we will replace $f_0(z)$ by $f_0(z)-2$.
Let $A$ and $B$ be two disjoint vertex subsets of $V(G)$ with $A\cup B\neq \emptyset$. 
By the definition of $g$ and $f$, we must have 
\begin{equation}\label{eq:1:thm:epsilon:parity}
{\sum_{v\in A} \varepsilon d_G(v)+\sum_{v\in B} (1-\varepsilon) d_G(v)
< 2+ \sum_{v\in A} f_0(v)+\sum_{v\in B} (d_{G}(v)-g_0(v)),
}\end{equation}
whether $z\in A\cup B$ or not.
Take $P$ to be the collection of all vertex sets $X$ such that $G[X]$
 is a component of $G\setminus (A\cup B)$ satisfying $\sum_{v\in X}f(v)\stackrel{2}{\not\equiv} d_G(X,B)$.
It is easy to check that
\begin{equation}\label{eq:2:thm:epsilon:parity}
{\sum_{X\in P}(\varepsilon d_G(X,A)+(1-\varepsilon) d_G(X,B))
\le \sum_{v\in A} \varepsilon d_G(v)+\sum_{v\in B} (1-\varepsilon) d_G(v)\, -d_G(A,B).
}\end{equation}
Define 
$P_c=\{X\in P: d_G(X,A)>0 \text{ and }d_G(X,B)>0\}$. Obviously, 
$$|P_c|= \sum_{X\in P_c}(\varepsilon+(1-\varepsilon))\le 
\sum_{X\in P_c}(\varepsilon d_G(X,A)+(1-\varepsilon) d_G(X,B)).$$
Set $P_a=\{X\in P: d_G(X,A)=0\}$ and 
$P_b=\{X\in P: d_G(X,B)=0\}$.
According to the definition, for every $X\in P_a$, we have 
$$\sum_{v\in X}f(v)\stackrel{2}{\not\equiv} 
d_G(X,B)=d_G(X)\stackrel{2}{\equiv}\sum_{v\in X}d_G(v),$$
which implies that $\sum_{v\in X}(d_G(v)-f(v))$ is odd.
Similarly, for every $X\in P_b$, $\sum_{v\in X}f(v)$ must be odd.
Thus by the assumption, we must have
$$|P_a|\le \sum_{X\in P_a}(1-\varepsilon) d_G(X) =
 \sum_{X\in P_a}(\varepsilon d_G(X,A)+(1-\varepsilon) d_G(X,B)),$$
and also
$$|P_b|\le \sum_{X\in P_b}\varepsilon d_G(X) =
 \sum_{X\in P_b}(\varepsilon d_G(X,A)+(1-\varepsilon) d_G(X,B)).$$
Therefore, 
\begin{equation}\label{eq:3:thm:epsilon:parity}
{|P|=|P_a|+|P_b|+|P_c|
\le \sum_{X\in P}(\varepsilon d_G(X,A)+(1-\varepsilon) d_G(X,B)).
}\end{equation}
According to Relations~(\ref{eq:1:thm:epsilon:parity}), (\ref{eq:2:thm:epsilon:parity}), and (\ref{eq:3:thm:epsilon:parity}), 
one can conclude that $$\omega_{f}(G, A,B)=|P|<2+ \sum_{v\in A} f_0(v)+\sum_{v\in B} (d_{G}(v)-g_0(v))-d_G(A,B).$$
When both of the sets $A$ and $B$ are empty, the above-mentioned inequality must automatically hold, 
because $\omega_{f}(G, \emptyset,\emptyset)=0$. Thus the assertion follows from Lemma~\ref{lem:Lovasz:parity}.
}\end{proof}
\begin{remark}
{Note that if we had replaced the weaker condition $|d_F(z)-\varepsilon d_G(z)|\le 1$ for the vertex $z$, we could impose 
this condition for another vertex as well. In particular, we could impose this condition for three arbitrary vertices
 provided that $\varepsilon=1/2$.
}\end{remark}
The following corollary is an improved version of the main result in \cite{MR785652} 
by replacing an odd edge-connectivity condition. 
Note that $r$-regular bipartite graphs are in the class of odd-$r$-edge-connected graphs. 
\begin{cor}{\rm (\cite{BERMOND19849, MR785652})} 
{Let $r$ and and $r_0$ be two odd positive integers with $r\ge r_0$.
If $G$ is an odd-$\lceil r/r_0\rceil$-edge-connected $r$-regular graph, then it contains an $r_0$-factor.
}\end{cor}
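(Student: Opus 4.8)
The plan is to deduce this from Theorem~\ref{thm:parity:version} applied with $\varepsilon = r_0/r$ and with $f$ the constant mapping taking the odd value, i.e. $f(v)=1$ for every $v\in V(G)$. First I would dispose of the trivial case $r=r_0$, where $G$ itself is an $r_0$-factor; so we may assume $r>r_0$, and since $r$ and $r_0$ are both odd this forces $r\ge r_0+2$ and hence $0<\varepsilon<1$, as required. Next I would reduce to the case that $G$ is connected: every component $C$ of $G$ is again $r$-regular, and since no edge leaves $C$ we have $d_C(A)=d_G(A)$ for all $A\subseteq V(C)$, so $C$ inherits the odd-$\lceil r/r_0\rceil$-edge-connectivity; an $r_0$-factor in each component assembles into an $r_0$-factor of $G$. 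Finally, because $G$ is $r$-regular with $r$ odd, $|V(G)|$ is even, so $\sum_{v\in V(G)}f(v)=|V(G)|\stackrel{2}{\equiv}0$, matching the hypothesis of Theorem~\ref{thm:parity:version}.

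The heart of the argument is checking the edge-connectivity hypothesis of Theorem~\ref{thm:parity:version} for this $\varepsilon$ and $f$. Here $1/\varepsilon=r/r_0$ and $1/(1-\varepsilon)=r/(r-r_0)$. Let $X$ be a nonempty proper subset of $V(G)$. Since $f\equiv 1$, we have $\sum_{v\in X}f(v)=|X|$, whereas $\sum_{v\in X}(d_G(v)-f(v))=|X|(r-1)$ is always even because $r-1$ is even; thus the second branch of the hypothesis is vacuous. For the first branch, suppose $|X|$ is odd. As $G$ is $r$-regular with $r$ odd, $d_G(X)\equiv \sum_{v\in X}d_G(v)=r|X|\equiv|X|\equiv 1\pmod 2$, so $d_G(X)$ is odd; hence the odd-$\lceil r/r_0\rceil$-edge-connectivity of $G$ yields $d_G(X)\ge \lceil r/r_0\rceil \ge r/r_0 = 1/\varepsilon$. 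So the hypothesis of Theorem~\ref{thm:parity:version} is satisfied.

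Applying Theorem~\ref{thm:parity:version} then gives an $f$-parity factor $F$ with $\lfloor\varepsilon d_G(v)\rfloor-1\le d_F(v)\le \lceil\varepsilon d_G(v)\rceil+1$ for every $v$. Since $G$ is $r$-regular, $\varepsilon d_G(v)=r_0$ exactly, so $r_0-1\le d_F(v)\le r_0+1$; and since $F$ is an $f$-parity factor with $f(v)=1$, each $d_F(v)$ is odd. Because $r_0$ is odd, the only odd integer in $\{r_0-1,r_0,r_0+1\}$ is $r_0$, forcing $d_F(v)=r_0$ for all $v$, so $F$ is the required $r_0$-factor. I do not expect a serious obstacle: the only point needing care is the key parity observation that in an $r$-regular graph with $r$ odd every odd-cardinality vertex set automatically induces a cut of odd size, which is precisely what lets the odd-edge-connectivity hypothesis control the relevant cuts, together with noting that the $1/(1-\varepsilon)$ branch is vacuous for this $f$ and the routine reduction to connected graphs.
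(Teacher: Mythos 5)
Your proposal is correct and follows exactly the paper's route: the paper's own proof is the one-liner ``apply Theorem~\ref{thm:parity:version} with $\varepsilon=r_0/r$ and $f(v)=r_0 \pmod 2$,'' and you have simply supplied the verification details (evenness of $|V(G)|$, vacuity of the $1/(1-\varepsilon)$ branch, the parity of $d_G(X)$ for $|X|$ odd, and the final pinning of $d_F(v)$ to $r_0$). The extra care you take with the degenerate case $\varepsilon=1$ and the reduction to connected components addresses hypotheses of Theorem~\ref{thm:parity:version} that the paper glosses over, but this is the same argument.
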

\begin{proof}
{Apply Theorem~\ref{thm:parity:version} with $\varepsilon=r_0/r$ and $f(v)=r_0$ (mod $2$).
}\end{proof}
\begin{cor}{\rm (\cite{ModuloBounded})}
{Let $G$ be a graph and let $f:V(G)\rightarrow Z_2$ be a mapping with
$ \sum_{v\in V(G)}f(v)\stackrel{2}{\equiv}0$. If $G$ is $2$-edge-connected, 
then it has an $f$-parity factor $F$ such that for each vertex $v$, 
 $$ \lfloor \frac{d_G(v)}{2} \rfloor -1\le d_{F}(v)\le \lceil \frac{d_G(v)}{2} \rceil+1.$$
Furthermore, for a given arbitrary vertex $z$, we can arbitrary have
 $d_F(z)\ge d_G(z)/2$ or $d_F(z)\le  d_G(z)/2$.
}\end{cor}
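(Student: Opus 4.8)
The plan is to obtain this corollary as the special case $\varepsilon = \tfrac12$ of Theorem~\ref{thm:parity:version}, in exactly the spirit of the one-line proof of the preceding corollary. So first I would fix $\varepsilon = \tfrac12$ and check that the hypotheses of Theorem~\ref{thm:parity:version} are satisfied. Connectedness is automatic, since a $2$-edge-connected graph has $d_G(A) \ge 2 > 0$ for every nonempty proper vertex set $A$ and is therefore connected (the one-vertex case being vacuous). The parity condition $\sum_{v \in V(G)} f(v) \stackrel{2}{\equiv} 0$ is assumed, so it only remains to verify the cut-size requirement.

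Second, I would observe that at $\varepsilon = \tfrac12$ one has $1/\varepsilon = 1/(1-\varepsilon) = 2$, so the case distinction in the hypothesis of Theorem~\ref{thm:parity:version} collapses: it merely asks that $d_G(X) \ge 2$ for every nonempty proper $X$ with $\sum_{v\in X} f(v)$ odd or with $\sum_{v\in X}(d_G(v)-f(v))$ odd. Since $G$ is $2$-edge-connected, $d_G(X) \ge 2$ holds for \emph{every} nonempty proper subset $X$ of $V(G)$, hence a fortiori in both of the relevant cases. Thus Theorem~\ref{thm:parity:version} applies and produces an $f$-parity factor $F$ with $\lfloor \varepsilon d_G(v)\rfloor - 1 \le d_F(v) \le \lceil \varepsilon d_G(v)\rceil + 1$ for every vertex $v$.

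Third, I would translate the conclusion: substituting $\varepsilon = \tfrac12$ turns $\lfloor \varepsilon d_G(v)\rfloor - 1 \le d_F(v) \le \lceil \varepsilon d_G(v)\rceil + 1$ into precisely $\lfloor d_G(v)/2\rfloor - 1 \le d_F(v) \le \lceil d_G(v)/2\rceil + 1$, the asserted bound (which is plainly correct whether $d_G(v)$ is even or odd). The "furthermore" clause transfers verbatim: the last sentence of Theorem~\ref{thm:parity:version} allows us to impose, at a prescribed vertex $z$, either $d_F(z) \ge \varepsilon d_G(z) = d_G(z)/2$ or $d_F(z) \le \varepsilon d_G(z) = d_G(z)/2$, as desired.

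As for obstacles, there is essentially none of substance here: the whole argument is a specialization of Theorem~\ref{thm:parity:version}. The only points requiring care are the bookkeeping items noted above — that both thresholds $1/\varepsilon$ and $1/(1-\varepsilon)$ evaluate to $2$ at $\varepsilon = \tfrac12$, that "$2$-edge-connected" in the sense used in this paper ($d_G(A)\ge 2$ for all nonempty proper $A$) is exactly what Theorem~\ref{thm:parity:version} consumes, and the harmless one-vertex case. Were one to insist on a self-contained proof, the genuine work would be the verification through Lov{\'a}sz's parity-factor criterion (Lemma~\ref{lem:Lovasz:parity}); but that computation is already carried out inside the proof of Theorem~\ref{thm:parity:version} and need not be repeated.
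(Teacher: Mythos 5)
Your proposal is correct and coincides with the paper's own proof, which is exactly the one-line application of Theorem~\ref{thm:parity:version} with $\varepsilon=1/2$; your additional checks (both cut thresholds equal $2$, which $2$-edge-connectivity supplies, and the verbatim transfer of the clause about the vertex $z$) are accurate bookkeeping for that specialization.
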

\begin{proof}
{Apply Theorem~\ref{thm:parity:version} with $\varepsilon=1/2$.
}\end{proof}
\begin{cor}
{Let $G$ be a  graph and 
 let $f:V(G)\rightarrow Z_2$ be a mapping with
$ \sum_{v\in V(G)}f(v)\stackrel{2}{\equiv}0$.
If $G$ is $3$-edge-connected, then it has an $f$-parity factor $F$ such that for each vertex $v$, 
 $$ \lfloor \frac{2\, d_G(v) }{3}\rfloor -1\le d_{F}(v)\le \lceil \frac{2\, d_G(v) }{3} \rceil+1.$$
}\end{cor}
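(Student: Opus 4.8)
The plan is to recognize this corollary as the $\varepsilon = 2/3$ instance of Theorem~\ref{thm:parity:version} and simply verify that the edge-connectivity hypothesis of that theorem collapses to $3$-edge-connectedness for this choice of $\varepsilon$. First I would observe that a $3$-edge-connected graph is connected, so the standing hypothesis of Theorem~\ref{thm:parity:version} (together with the given condition $\sum_{v\in V(G)}f(v)\stackrel{2}{\equiv}0$) is satisfied.

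Next I would compute the two thresholds appearing in the hypothesis of Theorem~\ref{thm:parity:version}. With $\varepsilon = 2/3$ we get $1/\varepsilon = 3/2$ and $1/(1-\varepsilon) = 3$. Since $d_G(X)$ is always a nonnegative integer, the requirement ``$d_G(X)\ge 1/\varepsilon$'' becomes $d_G(X)\ge 2$, while ``$d_G(X)\ge 1/(1-\varepsilon)$'' is literally $d_G(X)\ge 3$. Both are implied by $3$-edge-connectedness, which guarantees $d_G(X)\ge 3$ for every nonempty proper subset $X$ of $V(G)$, regardless of the parity of $\sum_{v\in X}f(v)$ or of $\sum_{v\in X}(d_G(v)-f(v))$. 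Hence Theorem~\ref{thm:parity:version} applies and yields an $f$-parity factor $F$ with $\lfloor \varepsilon d_G(v)\rfloor - 1 \le d_F(v)\le \lceil \varepsilon d_G(v)\rceil + 1$ for every vertex $v$; plugging in $\varepsilon = 2/3$ gives exactly the asserted bounds $\lfloor 2\,d_G(v)/3\rfloor - 1 \le d_F(v)\le \lceil 2\,d_G(v)/3\rceil + 1$.

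Since the argument is a direct specialization, there is no genuine obstacle. The only subtlety worth flagging is that for $\varepsilon = 2/3$ the binding threshold in Theorem~\ref{thm:parity:version} is $1/(1-\varepsilon) = 3$ rather than $1/\varepsilon = 3/2$, so $3$-edge-connectedness (and not merely $2$-edge-connectedness) is what is actually consumed here; one cannot weaken the hypothesis to $2$-edge-connected without a separate argument. As with the preceding corollaries, a mild variant stated in terms of odd-edge-connectivity would follow by the same specialization, since Theorem~\ref{thm:parity:version} only needs the cut bound on sets $X$ whose associated sum is odd.
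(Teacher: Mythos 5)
Your proof is correct and is exactly the paper's argument: the paper also just applies Theorem~\ref{thm:parity:version} with $\varepsilon=2/3$, and your verification that the thresholds $1/\varepsilon=3/2$ and $1/(1-\varepsilon)=3$ are both covered by $3$-edge-connectedness is the (implicit) content of that one-line proof.
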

\begin{proof}
{Apply Theorem~\ref{thm:parity:version} with $\varepsilon=2/3$.
}\end{proof}
An immediate consequence of the following corollary was appeared in~\cite{Lovasz} which says that every $2$-edge-connected graph with minimum degree at least three admits a factor whose degrees are positive and even. 
\begin{cor}\label{cor:even-factor:1}
{Let $\varepsilon $ be a real number with $0\le \varepsilon\le 2/3$.
If $G$ is a $2$-edge-connected graph, then it has an even factor $F$ such that for each vertex $v$, 
 $$ | d_{F}(v)-\varepsilon d_G(v)| < 2.$$
}\end{cor}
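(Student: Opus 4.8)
The plan is to derive Corollary~\ref{cor:even-factor:1} directly from Theorem~\ref{thm:parity:version} by choosing the parity function $f$ to be identically zero and handling the edge-connectivity hypothesis by hand. First I would set $f(v)=0$ for every vertex $v$, so that $\sum_{v\in V(G)}f(v)\stackrel{2}{\equiv}0$ is automatic and an $f$-parity factor is precisely an even factor; this is exactly the kind of factor the corollary asks for. With this choice, the condition ``$\sum_{v\in X}f(v)$ odd'' is never satisfied, so the first branch of the hypothesis in Theorem~\ref{thm:parity:version} imposes no constraint at all, regardless of $\varepsilon$. The only surviving requirement is that $d_G(X)\ge 1/(1-\varepsilon)$ whenever $\sum_{v\in X}(d_G(v)-f(v))=\sum_{v\in X}d_G(v)$ is odd, i.e.\ whenever $d_G(X)$ is odd (using the handshake identity $\sum_{v\in X}d_G(v)\equiv d_G(X)\pmod 2$).

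Next I would check that $2$-edge-connectivity delivers this surviving requirement. Since $0\le \varepsilon\le 2/3$ we have $1/(1-\varepsilon)\le 1/(1-2/3)=3$, so it suffices to guarantee $d_G(X)\ge 3$ for every nonempty proper $X$ with $d_G(X)$ odd. But $G$ is $2$-edge-connected, so $d_G(X)\ge 2$ for all such $X$; and if moreover $d_G(X)$ is odd then $d_G(X)\ne 2$, hence $d_G(X)\ge 3\ge 1/(1-\varepsilon)$. One should be slightly careful at the two endpoints: when $\varepsilon=0$ the even factor $F$ with $d_F(v)=0$ trivially satisfies $|d_F(v)-\varepsilon d_G(v)|=0<2$, and for $\varepsilon>0$ one may just as well apply the theorem (the degenerate threshold $1/\varepsilon$ is irrelevant since that branch is vacuous). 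Thus Theorem~\ref{thm:parity:version} applies and yields an even factor $F$ with $\lfloor\varepsilon d_G(v)\rfloor-1\le d_F(v)\le\lceil\varepsilon d_G(v)\rceil+1$ for every vertex $v$.

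Finally I would translate these floor/ceiling bounds into the clean inequality $|d_F(v)-\varepsilon d_G(v)|<2$. From $d_F(v)\ge\lfloor\varepsilon d_G(v)\rfloor-1>\varepsilon d_G(v)-2$ and $d_F(v)\le\lceil\varepsilon d_G(v)\rceil+1<\varepsilon d_G(v)+2$ we get $-2<d_F(v)-\varepsilon d_G(v)<2$, which is exactly the claimed bound. I do not expect a serious obstacle here; the only subtlety is making sure the chosen $f\equiv 0$ really does kill the first (more demanding) branch of the hypothesis in Theorem~\ref{thm:parity:version}, so that only the mild threshold $1/(1-\varepsilon)\le 3$ must be met, and then observing that $2$-edge-connectivity plus the parity of $d_G(X)$ forces $d_G(X)\ge 3$. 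The endpoint case $\varepsilon=0$ (and the vacuous role of the $1/\varepsilon$ bound there) is the one place where a sentence of care is warranted rather than a blind appeal to the theorem.
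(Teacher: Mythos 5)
Your proposal is correct and is exactly the derivation the paper intends: the corollary is stated as an immediate consequence of Theorem~\ref{thm:parity:version} with $f\equiv 0$, where the first branch of the hypothesis becomes vacuous and the second reduces (via $\sum_{v\in X}d_G(v)\stackrel{2}{\equiv}d_G(X)$) to $d_G(X)\ge 3$ for odd cuts, which $2$-edge-connectivity supplies since an odd cut cannot have size $2$. Your explicit handling of the endpoint $\varepsilon=0$ (excluded by the theorem's hypothesis $0<\varepsilon<1$) is a small but worthwhile point of care that the paper leaves implicit.
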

The next corollary provides an improved version for a result in \cite{Lu2015} due to Lu, Wang, and Lin (2015) who proved that every $2m$-edge-connected graph with minimum degree at least $2m + 1$ contains an even factor with minimum degree at least $2m$.
\begin{cor}\label{cor:even-factor:2}
{Let $G$ be a $2$-edge-connected graph and let $\varepsilon $ be a real number with $2/3\le \varepsilon< 1$.
If $G$ is odd-$\lceil 1/(1-\varepsilon)\rceil $-edge-connected, then it has an even factor $F$ such that for each vertex~$v$, 
 $$ | d_{F}(v)-\varepsilon d_G(v)|<2.$$
}\end{cor}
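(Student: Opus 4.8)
The plan is to deduce this directly from Theorem~\ref{thm:parity:version} by specializing to the zero parity function. Concretely, I would take $f(v)=0$ for every $v\in V(G)$; then $\sum_{v\in V(G)}f(v)=0$ is even, and an $f$-parity factor is nothing but an even factor. Since $G$ is $2$-edge-connected it is in particular connected, so the structural hypothesis of Theorem~\ref{thm:parity:version} holds, and since $2/3\le\varepsilon<1$ we also have $0<\varepsilon<1$, as that theorem requires. It then remains to check the edge-connectivity hypothesis of Theorem~\ref{thm:parity:version} for this choice of $f$.

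I would verify its two branches in turn. The branch governing sets $X$ with $\sum_{v\in X}f(v)$ odd is vacuous, because $\sum_{v\in X}f(v)=0$ for every $X$. For the other branch, I would use the parity identity $\sum_{v\in X}\big(d_G(v)-f(v)\big)=\sum_{v\in X}d_G(v)\equiv d_G(X)\pmod 2$, so this quantity is odd exactly when $d_G(X)$ is odd; and in that case the hypothesis that $G$ is odd-$\lceil 1/(1-\varepsilon)\rceil$-edge-connected yields $d_G(X)\ge\lceil 1/(1-\varepsilon)\rceil\ge 1/(1-\varepsilon)$, which is precisely the required bound. Hence Theorem~\ref{thm:parity:version} applies and supplies an even factor $F$ with $\lfloor\varepsilon d_G(v)\rfloor-1\le d_F(v)\le\lceil\varepsilon d_G(v)\rceil+1$ for every vertex $v$.

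Finally I would turn these floor/ceiling estimates into the claimed strict inequality: from $d_F(v)\ge\lfloor\varepsilon d_G(v)\rfloor-1>\varepsilon d_G(v)-2$ and $d_F(v)\le\lceil\varepsilon d_G(v)\rceil+1<\varepsilon d_G(v)+2$ one obtains $|d_F(v)-\varepsilon d_G(v)|<2$, as desired.

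I do not expect a real obstacle here: all of the substance is already carried by Theorem~\ref{thm:parity:version}, and the corollary amounts to the specialization $f\equiv 0$ together with the observation that, for $2/3\le\varepsilon<1$, the only nonvacuous half of that theorem's hypothesis coincides with odd-$\lceil 1/(1-\varepsilon)\rceil$-edge-connectivity, while $2$-edge-connectivity is used merely to guarantee connectedness. The only mildly delicate points are the parity identity $\sum_{v\in X}d_G(v)\equiv d_G(X)\pmod 2$ and the integrality rounding $d_G(X)\ge\lceil 1/(1-\varepsilon)\rceil\ge 1/(1-\varepsilon)$, both routine.
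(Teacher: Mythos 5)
Your argument is correct and is exactly the intended derivation: the paper presents Corollary~\ref{cor:even-factor:2} (like the surrounding corollaries) as a direct specialization of Theorem~\ref{thm:parity:version} with $f\equiv 0$, where the first branch of the cut condition is vacuous and the second reduces, via $\sum_{v\in X}d_G(v)\stackrel{2}{\equiv}d_G(X)$ and integrality, to odd-$\lceil 1/(1-\varepsilon)\rceil$-edge-connectivity. Your checks of connectedness and of the floor/ceiling-to-strict-inequality step are the same routine verifications the paper leaves implicit.
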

When the main graph has no odd edge-cuts, one can derive the following simpler version of Corollaries~\ref{cor:even-factor:1} and~\ref{cor:even-factor:2}.
\begin{cor}\label{cor:Eulerian}
{Let $\varepsilon $ be a real number with $0\le  \varepsilon\le 1$.
If $G$ is a graph with even degrees, then it admits an even factor $F$ such that for each vertex $v$, 
 $$ | d_{F}(v)-\varepsilon d_G(v)| < 2.$$
}\end{cor}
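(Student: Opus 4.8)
The plan is to deduce this from Theorem~\ref{thm:parity:version} by taking the parity function $f$ to be identically zero, so that an $f$-parity factor is exactly an even factor, and by exploiting the fact that for an Eulerian (even) graph this choice of $f$ makes the edge-connectivity hypotheses of Theorem~\ref{thm:parity:version} vacuous. Indeed, with $f\equiv 0$ and every $d_G(v)$ even, for any vertex set $X$ the quantity $\sum_{v\in X}f(v)$ equals $0$ and is therefore never odd, while $\sum_{v\in X}(d_G(v)-f(v))=\sum_{v\in X}d_G(v)$ is a sum of even integers and is also never odd. Hence neither branch of the hypothesis imposes any lower bound on $d_G(X)$, and Theorem~\ref{thm:parity:version} applies to every connected even graph for each $\varepsilon$ with $0<\varepsilon<1$. (This corollary is exactly Theorem~\ref{thm:intro:even-even} from the introduction.)

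First I would dispose of the endpoints $\varepsilon=0$ and $\varepsilon=1$ separately, since Theorem~\ref{thm:parity:version} requires $0<\varepsilon<1$: for $\varepsilon=0$ take $F$ to be the edgeless spanning subgraph, so $d_F(v)=0$ and $|d_F(v)-\varepsilon d_G(v)|=0<2$; for $\varepsilon=1$ take $F=G$, which is even by hypothesis. Next I would reduce to connected graphs: apply the argument to each connected component of $G$ and take the union of the resulting even factors, noting that isolated vertices are trivial since there $d_G(v)=d_F(v)=0$. This handles the gap between the statement of Theorem~\ref{thm:parity:version}, which is phrased for connected graphs, and the present corollary, which is not.

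Finally, on a connected even graph with $0<\varepsilon<1$, I apply Theorem~\ref{thm:parity:version} with $f\equiv 0$. It yields an even factor $F$ with $\lfloor\varepsilon d_G(v)\rfloor-1\le d_F(v)\le\lceil\varepsilon d_G(v)\rceil+1$ for every vertex $v$. Using the elementary estimates $\lfloor x\rfloor>x-1$ and $\lceil x\rceil<x+1$ valid for all real $x$, this double inequality gives $\varepsilon d_G(v)-2<d_F(v)<\varepsilon d_G(v)+2$, that is, $|d_F(v)-\varepsilon d_G(v)|<2$, which is the desired conclusion.

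There is no substantial obstacle here: the proof is a one-line invocation of Theorem~\ref{thm:parity:version} once the right $f$ is chosen. The only points requiring a little care are the bookkeeping that shows the two edge-connectivity conditions collapse when $f\equiv 0$ on an even graph, and the routine floor/ceiling estimates that convert the factor's degree bounds into the strict inequality $<2$. The mildest subtlety is simply remembering to treat $\varepsilon\in\{0,1\}$, the disconnected case, and isolated vertices by hand, since Theorem~\ref{thm:parity:version} is stated only for connected graphs and for $\varepsilon$ strictly between $0$ and $1$.
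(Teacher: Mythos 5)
Your proposal is correct and follows essentially the same route as the paper: Corollary~\ref{cor:Eulerian} is obtained there as the even-degree specialization of Corollaries~\ref{cor:even-factor:1} and~\ref{cor:even-factor:2}, which are themselves direct applications of Theorem~\ref{thm:parity:version} with $f\equiv 0$, and your observation that both edge-connectivity hypotheses become vacuous when all degrees are even is exactly the point the paper makes in the sentence introducing the corollary. Your explicit handling of $\varepsilon\in\{0,1\}$, disconnected graphs, and the floor/ceiling conversion is just a more careful write-up of the same argument.
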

Here, we introduce  a simple inductive proof for Theorem 8 in~\cite{{Hilton1982}} based on Corollary~\ref{cor:Eulerian}.
\begin{cor}{\rm (\cite{Hilton1982})}\label{thm:Hilton}
{Every  graph $G$ with even degrees can be edge-decomposed into $k$ even factors $G_1,\ldots, G_k$
 such that for each vertex $v$, 
 $|d_{G_i}(v)-d_{G}(v)/k|<2.$
}\end{cor}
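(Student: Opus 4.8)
The plan is to prove the statement by induction on $k$, using Corollary~\ref{cor:Eulerian} only in its "parity‑free" incarnation with $\varepsilon=\tfrac12$. First I would record the key observation that powers the whole argument: since $G$ has even degrees, $\tfrac12 d_G(v)$ is an integer, so any even factor $F$ delivered by Corollary~\ref{cor:Eulerian} with $\varepsilon=\tfrac12$ satisfies $|d_F(v)-\tfrac12 d_G(v)|<2$ with both $d_F(v)$ and $\tfrac12 d_G(v)$ integers, hence in fact $|d_F(v)-\tfrac12 d_G(v)|\le 1$; the same holds for the complementary factor $G-E(F)$, which again has even degrees. This sharpening from $<2$ to $\le 1$ at every exact halving is what will keep the accumulated error below $2$.

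For the base case $k=1$ take $G_1=G$. For the inductive step I would split on the parity of $k$. If $k$ is odd I would bypass Corollary~\ref{cor:Eulerian} altogether: a graph with even degrees has no odd edge cut, hence is (vacuously) odd‑$(3k-2)$‑edge‑connected, so Theorem~\ref{thm:parity-factorizations:odd-k} applies and produces $k$ factors $G_1,\dots,G_k$ with $d_{G_i}(v)\stackrel{2}{\equiv}d_G(v)$ — so each $G_i$ has even degrees — and $|d_{G_i}(v)-d_G(v)/k|<2$, exactly as required. If $k$ is even, write $k=2\ell$ with $1\le\ell<k$. By the induction hypothesis $G$ edge‑decomposes into $\ell$ even factors $H_1,\dots,H_\ell$ with $|d_{H_j}(v)-d_G(v)/\ell|<2$. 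Applying the halving observation to each $H_j$ splits it into two even factors whose degrees at $v$ are within $1$ of $\tfrac12 d_{H_j}(v)$; relabelling these $2\ell=k$ factors as $G_1,\dots,G_k$, each $G_i$ is a half of some $H_j$ and the triangle inequality gives
$$\bigl|d_{G_i}(v)-\tfrac{d_G(v)}{k}\bigr|\le\bigl|d_{G_i}(v)-\tfrac{d_{H_j}(v)}{2}\bigr|+\tfrac12\bigl|d_{H_j}(v)-\tfrac{d_G(v)}{\ell}\bigr|<1+\tfrac12\cdot 2=2,$$
which closes the induction.

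The step I expect to be the crux is controlling the accumulation of error, which is exactly why one cannot simply peel off one even factor at a time via Corollary~\ref{cor:Eulerian} with $\varepsilon=1/k$: passing to the residual graph $H=G-E(G_1)$ replaces the ideal target $d_G(v)/k$ by $d_H(v)/(k-1)$, and the gap $|d_H(v)/(k-1)-d_G(v)/k|$ may be as large as (just under) $2/(k-1)$, so the degree deviation could grow to about $2+2/(k-1)>2$. The remedy, built into the plan above, is twofold: only ever invoke Corollary~\ref{cor:Eulerian} for exact halvings, where the integrality of $\tfrac12 d_G(v)$ improves the per‑step error from $<2$ to $\le 1$ and the inherited error is simultaneously cut in half; and discharge the entire odd part of $k$ in a single application of Theorem~\ref{thm:parity-factorizations:odd-k}, so that the recursion only ever divides by $2$. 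With this organisation the total deviation telescopes to a quantity strictly below $2$.
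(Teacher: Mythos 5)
Your proof is correct, but it takes a genuinely different route from the paper's. The paper peels off one factor at a time: it applies Corollary~\ref{cor:Eulerian} with $\varepsilon=1/k$ to split $G$ into an even factor $G_k$ and an even residual graph $G'$, recurses on $G'$ with $k-1$, and then controls the accumulated error by an integrality-and-parity analysis (using that $d_G(v)$, $d_{G'}(v)$ and $d_{G_i}(v)$ are even integers to sharpen the naive bound of roughly $2+2/(k-1)$ back down below $2$). So your side remark that one ``cannot simply peel off one even factor at a time'' is too strong --- one can, but only after exactly the kind of rounding argument you were trying to avoid. Your alternative --- discharging the odd part of $k$ in a single application of Theorem~\ref{thm:parity-factorizations:odd-k} (correctly observing that an even-degree graph has only even edge cuts and is therefore vacuously odd-$(3k-2)$-edge-connected), and otherwise only ever halving with $\varepsilon=\tfrac12$, where integrality upgrades the per-step error from $<2$ to $\le 1$ while the inherited error is divided by $2$ --- is clean and closes the induction with no delicate case analysis. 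The trade-off is that you import Theorem~\ref{thm:parity-factorizations:odd-k}, whose proof rests on the modulo-orientation machinery of Lemma~\ref{lem:balanced-modulok:odd} and Theorem~\ref{thm:factorization:main:directed}(ii), whereas the paper's argument stays entirely within Corollary~\ref{cor:Eulerian}, i.e., the Lov\'asz parity-factor theorem; on the other hand, for an even-degree graph the orientation required by that lemma is just an Eulerian orientation, so the imported machinery is lighter than it appears. Two small points to make explicit: your induction is a strong induction (for $k=2\ell$ you invoke the hypothesis at $\ell$, not at $k-1$), and in the final display the strict inequality comes from the second summand (the first is only $\le 1$), so it is cleaner to write the bound as $\le 1+\tfrac12\bigl|d_{H_j}(v)-d_G(v)/\ell\bigr|<1+1=2$.
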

\begin{proof}
{By induction on $k$. We may assume that $k\ge 2$ as the assertion trivially holds when  $k=1$. According to Corollary~\ref{cor:Eulerian}, the graph can be  edge-decomposed into two even factors $G'$ and $G_{k}$  such that for each vertex $v$, 
 $|d_{G_k}(v)-\frac{1}{k}d_{G}(v)|<2$. By the induction hypothesis, the graph $G'$ can be edge-decomposed into 
$k-1$ even factors $G_1,\ldots, G_{k-1}$
 such that for each vertex $v$, 
 $|d_{G_i}(v)-\frac{1}{k-1}d_{G'}(v)|<2.$
We claim that these are the desired factors we are looking for.
Let $v\in V(G)$.
 Since $d_{G'}(v)\le  \frac{k-1}{k}d_{G}(v)+2-\frac{1}{k}$, we must have 
$d_{G_i}(v) \le \frac{1}{k-1}(d_{G'}(v)-1)+2\le \frac{1}{k}(d_G(v)+1)+2.$
Since $d_G(v)$ is even, 
$(d_G(v)+1)/k$ must not be  an even integer number, which implies that 
$d_{G_i}(v)  \le \frac{1}{k}d_G(v)+2$.
Since $d_{G'}(v)$ is even, if $\frac{1}{k}d_{G}(v)$ is an even integer number, then $d_{G'}(v)\le  \frac{k-1}{k}d_{G}(v)$. 
Therefore, since $d_{G_i}(v)$ is even, one can conclude that $d_{G_i}(v) < \frac{1}{k}d_G(v)+2$ 
whether $\frac{1}{k}d_{G}(v)$  is an integer number or not. 
Similarly, we  have $d_{G_i}(v) > \frac{1}{k}d_{G}(v)-2$. Hence the assertion holds.
}\end{proof}
By replacing Corollary~\ref{cor:Eulerian} in the proof of 
 Theorem~\ref{thm:Correa-Goemans}, one can formulate  a parity version of it as the following theorem. It would be interesting to determine the sharp upper bound on vertex degrees to make another generalization for Corollary~\ref{cor:Eulerian}.
\begin{thm}
{Let $\varepsilon_1,\ldots,\varepsilon_k$ be $k$ nonnegative real numbers with $\varepsilon_1+\cdots +\varepsilon_k=1$.
If $G$ is a graph with even degrees, then it can be edge-decomposed into $k$ even factors $G_1,\ldots, G_k$ such that for each $v\in V(G_i)$,
 $$|d_{G_i}(v)-\varepsilon_i d_G(v)|<6.$$
}\end{thm}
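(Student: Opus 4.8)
The plan is to mimic the inductive argument used for Corollary~\ref{thm:Hilton} (de Werra's even factorization), replacing the exact splitting with the approximate $\varepsilon$-splitting afforded by Corollary~\ref{cor:Eulerian}. I would induct on $k$. The base case $k=1$ is trivial. For $k\ge 2$, set $\varepsilon=\varepsilon_k$ and apply Corollary~\ref{cor:Eulerian} to split $G$ into two even factors $G_k$ and $G'$ with $|d_{G_k}(v)-\varepsilon_k d_G(v)|<2$ for every vertex $v$; consequently $|d_{G'}(v)-(1-\varepsilon_k)d_G(v)|<2$ as well, since $d_{G'}(v)=d_G(v)-d_{G_k}(v)$. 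Now $G'$ still has even degrees, so I can apply the induction hypothesis to $G'$ with the $k-1$ weights $\varepsilon_i'=\varepsilon_i/(1-\varepsilon_k)$ for $1\le i\le k-1$, which are nonnegative and sum to $1$ (the degenerate case $\varepsilon_k=1$ is handled separately, since then $G'$ is edgeless at every vertex with $d_G(v)$ not too large — more carefully, if $\varepsilon_k=1$ then $G_k=G$ works and the other factors are empty, after noting $\varepsilon_i=0$ for $i<k$; similarly discard any zero weight). This yields even factors $G_1,\ldots,G_{k-1}$ of $G'$ with $|d_{G_i}(v)-\varepsilon_i' d_{G'}(v)|<6$.

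The remaining task is to propagate the error bound. Fix a vertex $v$ and write $d=d_G(v)$. For $1\le i\le k-1$ we have
\[
|d_{G_i}(v)-\varepsilon_i d| \le |d_{G_i}(v)-\varepsilon_i' d_{G'}(v)| + \varepsilon_i'\,|d_{G'}(v)-(1-\varepsilon_k)d| < 6\varepsilon_i' \cdot \text{(something)} + \cdots
\]
— more precisely, $|d_{G_i}(v)-\varepsilon_i d| \le |d_{G_i}(v)-\varepsilon_i' d_{G'}(v)| + \varepsilon_i' |d_{G'}(v)-(1-\varepsilon_k)d| < 6 + \varepsilon_i'\cdot 2$, and since $\varepsilon_i'\le 1$ this is at most $6+2=8$, which is too weak. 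So the naive triangle-inequality bookkeeping gives $8$, not $6$; the subtlety, exactly as in the proof of Corollary~\ref{thm:Hilton}, is to exploit parity: all degrees $d_{G_i}(v)$, $d_{G'}(v)$, $d$ are even, and $\varepsilon_i d$ need not be an integer, so an even integer within the open interval $(\varepsilon_i d-8,\varepsilon_i d+8)$ that is forced to avoid the "bad" even value can be pushed back inside $(\varepsilon_i d-6,\varepsilon_i d+6)$. Concretely, I expect to argue: from $d_{G'}(v)$ even and $|d_{G'}(v)-(1-\varepsilon_k)d|<2$, the quantity $\varepsilon_i' d_{G'}(v)$ lies within $2\varepsilon_i'<2$ of $\varepsilon_i d$; combined with $|d_{G_i}(v)-\varepsilon_i'd_{G'}(v)|<6$ and the integrality/parity constraints inherited through the induction, the error collapses to strictly less than $6$. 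The main obstacle is organizing this parity reduction cleanly so that the constant does not drift upward through the recursion — one wants the invariant "$d_{G_i}(v)$ even and $|d_{G_i}(v)-\varepsilon_i d_G(v)|<6$" to be exactly reproducible, which requires showing that the slack of $2\varepsilon_i'$ coming from the approximate split is absorbed by the fact that $6$ is an even integer while $\varepsilon_i d$ generically is not.

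An alternative, cleaner route I would also consider is a direct (non-inductive) application of Corollary~\ref{cor:Eulerian} via telescoping: iteratively peel off $G_k$ with ratio $\varepsilon_k$, then $G_{k-1}$ from the remainder with ratio $\varepsilon_{k-1}/(1-\varepsilon_k)$, and so on, maintaining at each stage the bound $|d_{G_j}(v)-\varepsilon_j d_G(v)|$ via a geometric-type accumulation of the $<2$ errors, each scaled by a product of ratios $\prod(1-\cdots)\le 1$. Summing $k-1$ such scaled errors could in principle exceed $6$, so once again the parity argument of Corollary~\ref{thm:Hilton} is the essential ingredient that caps the constant; the factor of $3$ gap between the bound $6$ here and the bound $2$ in Corollary~\ref{cor:Eulerian} (versus the gap $3$ versus $1$ in Theorem~\ref{thm:Correa-Goemans}) strongly suggests the intended proof is exactly "run the Correa–Goemans recursion, but start from the parity-respecting even-factor splitting," so I would follow that template and spend the bulk of the write-up verifying the parity step carefully.
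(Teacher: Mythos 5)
Your main line of attack has a genuine gap, and you have in fact located it yourself without closing it. Sequential peeling with induction on $k$ gives, as you compute, only $|d_{G_i}(v)-\varepsilon_i d_G(v)|<6+2\varepsilon_i'$, and this loss is incurred again at every level of the recursion, so for weights of comparable size the deviation of the remainder from its target $(\varepsilon_1+\cdots+\varepsilon_j)d_G(v)$, and hence of the factors extracted late, grows roughly linearly in $k$; no ordering of the weights fixes this. The parity rescue you invoke, modelled on the paper's inductive proof of Hilton's theorem in Section 5, is not available here: that proof works because the targets $\frac{j}{k}\,d_G(v)$ are rationals with denominator dividing $k$, so a strict inequality between such a rational and the even integer $d_{G'}(v)$ can be sharpened by $\frac{1}{k}$, which is exactly the slack the induction needs. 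For arbitrary real $\varepsilon_i$ the number $\varepsilon_i d_G(v)$ can sit anywhere relative to the even integers, and the extra window of width $2\varepsilon_i'\le 2$ has the same length as the gap between consecutive even integers, so in general it contains an even value of $d_{G_i}(v)$ lying outside the radius-$6$ interval. Parity keeps the factors even; it cannot cap the constant.

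The route you only sketch in your final paragraph is the one the paper actually takes: its entire proof reads ``apply Corollary~\ref{cor:Eulerian} along with the same arguments stated in the proof of Theorem 4 in Correa--Goemans.'' The essential ingredient there is not parity but error damping: the weights are split recursively (not peeled off one at a time) in such a way that the error introduced at each split is subsequently multiplied by ratios bounded away from $1$, so the total deviation is a convergent geometric-type sum; this is where the constant $3$ of Theorem~\ref{thm:Correa-Goemans} comes from when each split costs at most $1$ (Theorem~\ref{thm:Kano-Saito}), and substituting the one-step error $<2$ of Corollary~\ref{cor:Eulerian} yields $<6$. The only thing to verify on top of Correa--Goemans is that every intermediate graph of the recursion is again Eulerian --- which it is, being a difference of even graphs --- so that Corollary~\ref{cor:Eulerian} applies at every node and all final factors are even. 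If you rebuild the write-up around that recursion and drop the induction on $k$, the proof goes through; as written, the proposal does not establish the bound $6$.
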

\begin{proof}
{Apply Corollary~\ref{cor:Eulerian} along with the same arguments stated in the proof of Theorem 4 in \cite{Correa-Goemans}.
}\end{proof}
%
%
%
%
%
%
%
%
%

\end{document}